\def\Xint#1{\mathchoice
{\XXint\displaystyle\textstyle{#1}}%
{\XXint\textstyle\scriptstyle{#1}}%
{\XXint\scriptstyle\scriptscriptstyle{#1}}%
{\XXint\scriptscriptstyle\scriptscriptstyle{#1}}%
\!\int}
\def\XXint#1#2#3{{\setbox0=\hbox{$#1{#2#3}{\int}$ }
\vcenter{\hbox{$#2#3$ }}\kern-.6\wd0}}
\def\dashint{\Xint-}
\newcommand{\LB}{\left[}
\newcommand{\RB}{\right]}
\newcommand{\LA}{\langle}
\newcommand{\RA}{\rangle}
\newcommand{\N}{{\mathbb N}}
\newcommand{\R}{{\mathbb R}}
\newcommand{\gothg}{{\mathfrak g}}
\newtheorem{thm}{Theorem}[section]
\newtheorem{lemma}[thm]{Lemma}
\newtheorem*{lemma*}{Lemma}
\newtheorem{prop}[thm]{Proposition}
\newtheorem{cor}[thm]{Corollary}
   \newtheoremstyle{others}% name
     {3pt}%      Space above
     {2pt}%      Space below
     {}%         Body font
     {}%         Indent amount (empty = no indent, \parindent = para indent)
     {\bf}% Thm head font
     {.}%        Punctuation after thm head
     {.5em}%     Space after thm head: " " = normal interword space;
\theoremstyle{others}
\newtheorem{rmk}[thm]{Remark}
\newtheorem*{rmk*}{Remark}
\newtheorem{defn}[thm]{Definition}
\numberwithin{equation}{section}
\begin{document}

\title[Long-time existence]{Long-time existence for Yang-Mills flow}
\author{Alex Waldron}
\address{Simons Center for Geometry and Physics, Stony Brook, NY 11794}
\email{awaldron@scgp.stonybrook.edu}

\begin{abstract} We establish that finite-time singularities do not occur in four-dimensional Yang-Mills flow, confirming the conjecture of Schlatter, Struwe, and Tahvildar-Zadeh \cite{sstz}. The proof relies on a weighted energy identity and sharp decay estimates in the neck region.
\end{abstract}

\maketitle

\thispagestyle{empty}

\tableofcontents

\section{Introduction}

Let $E \to M$ be a vector bundle over a four-dimensional Riemannian manifold without boundary. Recall that a time-dependent connection $A = A(t)$ solves the Yang-Mills flow if
\begin{equation}\tag{YM}
\frac{\partial A}{\partial t} = - D^*_A F_A.
\end{equation}
Here $F_A$ is the curvature, and $D_A^*$ denotes the adjoint (with respect to a fixed metric) of the covariant differential on $\gothg_E$-valued forms. The semi-parabolic evolution equation (YM) is the gradient flow of the well-known Yang-Mills functional
\begin{equation*}
YM(A) = \frac{1}{2} \int_M |F_A|^2 \, dV.
\end{equation*}
For the relevant background, the reader may refer to \S 1 of the author's previous paper \cite{instantons} or to the superb textbook by Donaldson and Kronheimer \cite{donkron}.

%Introduction and further explanatory remarks added, Section 5 moved to appendix, small corrections and edits for clarity following referee report.

In the case that $A$ is a compatible connection on a holomorphic bundle over a compact Kahler manifold, owing to early observations of Donaldson \cite{donsurface} and later work of Simpson \cite{simpson}, (YM) is known to have ideal properties---in particular, the flow exists smoothly for all time, and converges to a Hermitian-Yang-Mills connection on a stable bundle. The resulting correspondence between algebraic and differential-geometric structures is known as the Donaldson-Uhlenbeck-Yau Theorem \cite{donstablehd, uhlenbeckyau}. Natural generalizations to the case of unstable holomorphic bundles have more recently been obtained by Daskalopoulos and Wentworth \cite{daskwent04, daskwent07}, Sibley \cite{sibley}, and Sibley and Wentworth \cite{sibleywentworth}.

In a general Riemannian context, the behavior of (YM) is strongly influenced by the dimension of the underlying manifold. Elementary scaling arguments indicate that blowup should be ruled out in dimension 3 or lower, and should occur with ease in dimension 5 or higher. The former was demonstrated formally by Rade \cite{rade}, and the latter by Naito \cite{naito}. Four is the critical dimension, where singularity formation by energy-invariant scaling, or ``bubbling,'' may occur.

While bubbling must be expected for a general minimizing sequence of a conformally-invariant energy functional, the question of whether and how singularities occur under the corresponding parabolic evolution equation is more complex. The singularity removal theorem of Uhlenbeck \cite{uhlenbeckremov} in the elliptic setting might suggest smoothness of (YM) in the parabolic domain. However, it was shown by Chang, Ding, and Ye \cite{cdy} that finite-time singularities occur quite naturally in the analogous situation of two-dimensional harmonic map flow.

Nevertheless, Schlatter, Struwe, and Tahvildar-Zadeh \cite{sstz} found that the same method in the context of (YM), rather than producing finite-time blowup, instead yields a long-time solution with energy concentrating exponentially at infinite time. This led them to conjecture that long-time existence holds for solutions of (YM) in general.

%To the contrary, comparing with the difficult existence result of Sibner, Sibner, and Uhlenbeck \cite{sibuhl} for non-self-dual Yang-Mills $SU(2)$-connections on $S^4,$ a gauge theorist might not have been surprised if finite-time singularities in (YM) were eventually discovered. No consensus appears to have formed around this question; with the onset of Seiberg-Witten theory, mainstream interest was diverted elsewhere for the following two decades. % from analytic problems relating to gauge theory. %; but it might have seemed natural to have geometric flows come back in touch with gauge theory. %with the general interest in geometric flows following Perelman's work (and with the evident ), it seemed to the author that this problem might be worth solving.

In his thesis \cite{instantons, thesis}, the author demonstrated that (YM) in dimension four indeed differs fundamentally from harmonic map flow in dimension two, in that smallness of half of the curvature is sufficient to guarantee long-time existence. This paper establishes the following general result.

\begin{thm}\label{mainthm} Suppose that $A(t)$ is a smooth solution of (YM) over $M^4 \times \LB 0 , T \right),$ with $T < \infty,$ and
\begin{equation}\label{globalenineq}
\sup_{0 \leq t < T} \int_M |F(t)|^2 \, dV + \int_{0}^T \!\!\!\! \int_M | D^*F |^2 \, dV dt < \infty.
\end{equation}
For any $\epsilon > 0$ and $x \in M,$ there exists $\lambda > 0$ such that
\begin{equation}\label{mainthmlimsupbound}
\limsup_{t \to T} \int_{B_{\lambda}(x)} |F(t)|^2 \, dV < \epsilon.
\end{equation}
Moreover, $\lim_{t \to T} A(t)$ exists in $C^\infty_{loc}.$
\end{thm}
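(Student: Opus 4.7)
The plan is to rule out finite-time energy concentration by a bubbling-and-contradiction argument, and then to extract smooth convergence from standard interior estimates. The starting point is Struwe's $\epsilon$-regularity theorem for (YM) on four-manifolds: if $\int_{B_\lambda(x)} |F(t_0)|^2 < \epsilon_{\mathrm{reg}}$ at some time $t_0 \in [T - c \lambda^2, T)$, then $|F|$ and its covariant derivatives remain pointwise bounded on $B_{\lambda/2}(x) \times [t_0, T)$. Granting (\ref{mainthmlimsupbound}), $C^\infty_{loc}$ convergence follows from iterated Bernstein-type estimates together with the $L^2_t$-control on $D^* F$ from (\ref{globalenineq}); so the crux is to prove (\ref{mainthmlimsupbound}).

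I would argue for (\ref{mainthmlimsupbound}) by contradiction. Suppose that for some $x_0 \in M$ and $\epsilon_0 > 0$ one has $\limsup_{t \to T} \int_{B_\lambda(x_0)} |F(t)|^2 \geq \epsilon_0$ at every scale $\lambda > 0$. Using parabolic rescaling and a curvature-doubling lemma in the style of Struwe, extract a sequence $(x_k, t_k, r_k) \to (x_0, T, 0)$ for which the rescaled solutions $A_k(x, t) = r_k A(x_k + r_k x,\, t_k + r_k^2 t)$ converge in $C^\infty_{loc}$ on $\R^4 \times (-\infty, 0]$ to a nontrivial time-independent Yang-Mills connection $A_\infty$; by Uhlenbeck's removable-singularity theorem, $A_\infty$ extends to an instanton on $S^4$ with energy at least $8 \pi^2$.

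The central step is to analyze the neck region $r_k \ll |x - x_0| \ll \lambda$ via the weighted energy identity advertised in the abstract. Concretely, I would look for a weight $\phi = \phi_{x_0, T}(x, t)$ modeled on the backward heat kernel at the presumed singular spacetime point but refined to the bubble scale, yielding an inequality of the schematic form
\begin{equation*}
\frac{d}{dt} \int_M \phi \, |F|^2 \, dV + \int_M \phi \Bigl( |D^* F|^2 + \tfrac{c}{T-t} |F|^2 \Bigr) \, dV \leq \int_M \mathcal{R}(\phi) \, |F|^2 \, dV
\end{equation*}
with a remainder $\mathcal{R}(\phi)$ integrable in time on $[t_k, T)$. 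Integrating and unpacking the consequence on each dyadic annulus should yield sharp decay
\begin{equation*}
\int_{B_{2\rho}(x_0) \setminus B_\rho(x_0)} |F(t)|^2 \, dV \leq C \, ( r_k / \rho )^{2\alpha}
\end{equation*}
for some fixed $\alpha > 0$, uniformly for $r_k \ll \rho \ll \lambda$ and $t$ close to $T$. With such power-law neck decay in hand, the bubble $A_\infty$ together with the outer limit accounts for all of the curvature mass in $B_\lambda(x_0)$, and an energy-gap argument combined with instanton quantization contradicts the assumed concentration.

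The principal obstacle is producing the weighted identity and proving this sharp decay. Naive monotonicity formulas of Hamilton--Chen--Shen type yield only logarithmic decay in the neck, which is not strong enough to force the bubble and the ambient limit to glue without a residual topological defect. Overcoming this will require exploiting structural features particular to (YM) in dimension four---notably the splitting $F = F^+ + F^-$ and the second Bianchi identity---in the spirit of the author's earlier half-curvature theorem \cite{instantons}. This Bochner-type refinement is the heart of the argument.
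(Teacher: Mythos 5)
Your proposal correctly identifies that the crux is obtaining sharp decay in the neck, and that naive monotonicity only yields logarithmic control; but the mechanism you sketch for closing the argument is not the one the paper uses, and as stated it has a genuine gap.

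First, the weight. You propose a backward heat kernel weight à la Hamilton--Chen--Shen, which you then concede only gives logarithmic decay, and propose to repair this by invoking the splitting $F=F^++F^-$. Neither ingredient appears in the paper. The identity actually used, (\ref{mainenidentity}), comes from integrating the stress-energy tensor against $r^2\varphi_\lambda$ (a quadratic weight times a compactly supported cutoff), which gives $\int|F(\tau_2)|^2\varphi_\lambda r^2 + 2\int\!\!\int |D^*F|^2\varphi_\lambda r^2 = \int|F(\tau_1)|^2\varphi_\lambda r^2 + S(\lambda,\tau_1,\tau_2)$. No self-dual/anti-self-dual decomposition is needed; that was the mechanism of the earlier half-curvature paper, not this one.

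Second, and more importantly, the contradiction mechanism. You aim to show that a bubble plus outer limit accounts for all the mass and then appeal to energy-gap/instanton quantization. That does not rule out blowup: a bubble carrying a quantum $\geq 8\pi^2$ detaching at finite time is entirely consistent with the global energy inequality, and the neck-decay estimate you obtain does not create an energy deficit. The paper's actual contradiction is not topological. It is obtained by showing that each time the curvature scale drops by a fixed factor $\kappa$, a definite amount of the dissipation quantity $\int\!\!\int|D^*F|^2$, bounded below by $c\epsilon/|\log\lambda_i|\sim c\epsilon/i$, must be spent (Proposition \ref{taubarprop}); summing over dyadic scales then contradicts the boundedness of $\int_0^T\!\!\int_M|D^*F|^2$ via the divergence of the harmonic series. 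The logarithmic loss you correctly anticipate is thus not an obstacle to be eliminated but precisely the engine of the proof.

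Third, the decisive decay estimate is not for $|F|$ alone (whose Uhlenbeck-type bound $|F|\lesssim\sqrt\epsilon(\lambda^2/r^4+1)$ you would still only obtain) but a sharper bound for $D^*F$, of order $\delta_\lambda\,\lambda/r^3$, exploiting $D^*D^*F=0$ -- i.e., the co-closed 1-form $D^*F$ decays faster than the Green's function. Without that extra factor, the $r^2$-weight in the identity kills the lower bound on dissipation in the intermediate region between $\lambda$ and $\sqrt\lambda$. Your outline contains no substitute for this step, so as proposed it cannot extract the needed contradiction.
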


\noindent Over a closed manifold, (\ref{globalenineq}) is immediate from the global energy identity. Applying the short-time existence results of Struwe \cite{struwe}, we may settle the conjecture discussed above. % of Schlatter et. al. \cite{sstz}.

\begin{cor} Assume $M^4$ is compact. Then any classical solution of (YM) extends smoothly for all time.

For any Sobolev $H^1$ initial connection, there exists a weak solution defined on $M \times \LB 0, \infty \right)$ which attains the initial data, in the sense of Struwe \cite{struwe}, and is smooth modulo gauge for $t > 0.$\footnote{Since the De Turck trick for (YM) depends a priori on the choice of reference connection, uniqueness is not guaranteed. Struwe \cite{struwe} proves uniqueness of weak solutions under the assumption of irreducibility, and Kozono, Maeda, and Naito \cite{kozono} prove that any sufficiently regular solution is unique modulo gauge. Uniqueness of classical solutions is folklore.}
\end{cor}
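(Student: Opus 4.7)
The proof of the corollary is essentially a packaging of Theorem \ref{mainthm} with standard short-time existence, and I would structure it as a proof by contradiction combined with an iteration.

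\emph{Part 1: Classical solutions extend forever.} Suppose $A(t)$ is a classical solution on a maximal interval $[0, T_{\max})$ with $T_{\max} < \infty$. Because $M$ is closed, the global energy identity
\begin{equation*}
\int_M |F(t)|^2 \, dV + \int_0^t \!\!\int_M |D^*F|^2 \, dV ds = \int_M |F(0)|^2 \, dV
\end{equation*}
(obtained by testing (YM) against $-D^*_A F_A$ and integrating by parts) holds for all $t < T_{\max}$. In particular, hypothesis (\ref{globalenineq}) is satisfied with $T = T_{\max}$. Theorem \ref{mainthm} then guarantees that $\lim_{t \to T_{\max}} A(t)$ exists in $C^\infty_{loc}$, which on the compact manifold $M$ means in $C^\infty(M)$. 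Call this limit $A(T_{\max})$. Struwe's short-time existence theorem \cite{struwe}, applied with this smooth connection as new initial data, produces a classical solution on $[T_{\max}, T_{\max} + \tau)$ for some $\tau > 0$. Concatenating with the original solution (and invoking the folklore uniqueness of classical solutions mentioned in the footnote to verify that the two agree on $\{T_{\max}\}$) yields a classical extension past $T_{\max}$, contradicting maximality. Hence $T_{\max} = \infty$.

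\emph{Part 2: Weak solutions for $H^1$ data.} Given $A_0 \in H^1$, Struwe's theorem produces a weak solution $A(t)$ on some interval $[0, \tau_0)$, attaining the initial data, which is smooth modulo gauge for $t > 0$. Fix any $t_0 \in (0, \tau_0)$. After applying a gauge transformation on $[t_0, \tau_0)$, the restriction $A|_{[t_0, \tau_0)}$ is a classical solution of (YM) with smooth initial datum $A(t_0)$, so Part 1 extends it to a classical solution defined on $[t_0, \infty)$. Gluing this to the original weak solution on $[0, t_0]$ produces a weak solution on $[0, \infty)$ that attains the initial data in Struwe's sense and is smooth modulo gauge for $t > 0$.

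\emph{Main obstacle.} The substantive content is entirely in Theorem \ref{mainthm}; what remains for the corollary is essentially bookkeeping. The only point requiring some care is the gluing in Part 2: one must verify that the gauge transformation used to turn the weak solution into a classical solution at time $t_0$ is compatible with reapplying short-time existence, and that the resulting concatenated object still satisfies the weak formulation globally. This follows from the fact that gauge transformations of (YM) solutions are again (YM) solutions and that the weak formulation is gauge-equivariant, so no new estimates are required beyond those already invoked. The footnote concerning uniqueness ensures that the concatenation is unambiguous in the classical case.
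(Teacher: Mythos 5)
Your proposal is correct and is exactly the argument the paper has in mind; the paper itself dispatches the corollary in one sentence ("Applying the short-time existence results of Struwe..."), and your two-part expansion — extend classical solutions past any finite time via Theorem \ref{mainthm} plus short-time existence, then reduce the $H^1$ case to the classical one after Struwe's weak solution becomes smooth modulo gauge — is the intended bookkeeping. One small inaccuracy in Part 1: you do not need the folklore uniqueness to concatenate, since the extension is \emph{constructed} to start from $\lim_{t\to T_{\max}} A(t)$, so agreement at $T_{\max}$ holds by definition and the concatenated map is smooth because the original solution converges in $C^\infty(M)$; uniqueness is only relevant if you wish to assert the continuation is canonical.
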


The remainder of the paper is devoted to proving our main quantitative result, Theorem \ref{technicaltheorem}, from which Theorem \ref{mainthm} follows straightforwardly. %The proof itself is presented at the conclusion of \S \ref{completionsection}, and is a straightforward consequence of our main quantitative result.
Theorem \ref{technicaltheorem} follows chiefly from Propositions \ref{shorttimeprop}-\ref{taubarprop}, which subsume all of the technical results developed in \S \ref{splitevolutionsection}-\ref{decaysection} and the Appendix. %It is therefore possible to completely skip \S \ref{splitevolutionsection}-\ref{decaysection} and the Appendix on a first reading.
The main result of these sections, Theorem \ref{decaythm}, provides a parabolic generalization of Uhlenbeck's decay estimates \cite{uhlenbeckremov}, and may be of independent interest. In \S \ref{epsilonregsection}, small-energy regularity results are recalled and adapted for present use.

The next section, \S \ref{stressection}, contains a derivation of the basic energy identity, (\ref{mainenidentity}), which will allow finite-time concentration of the curvature to be ruled out. The idea of the proof, and some of the main difficulties, are explained in \S \ref{explanatoryremark}, p. \pageref{explanatoryremark}.

%The following section, \S \ref{stressection}, contains a derivation of the basic identity, (\ref{mainenidentity}), Remark \ref{explanatoryremark} (p. \pageref{explanatoryremark}) explains in simple terms how this will be used to rule out finite-time concentration of the curvature.

\subsection*{Acknowledgements} The author thanks Simon Donaldson, Song Sun, Xiuxiong Chen, and Yuanqi Wang for their encouragement and for participating in a preprint seminar. He also thanks Michael Struwe and Toti Daskalopoulos for discussions during the Oberwolfach workshop ``Nonlinear Evolution Problems" in June 2016.

\section{Stress-energy identities}\label{stressection}

%The role played by (anti)-self-dual curvature in the author's previous work (\cite{instantons},\cite{thesis}) is assumed here by
Denote the \emph{stress-energy tensor}
$$S_{ij} = g^{k\ell}\LA F_{ik} , F_{j\ell} \RA - \frac{1}{4}g_{ij} g^{k\ell}g^{mn} \LA F_{km}, F_{\ell n} \RA.$$
Working in geodesic coordinates centered at an arbitrary point of $M,$ we apply the divergence operator
\begin{equation}\label{sdiv}
\begin{split}
\nabla_i S_{ij} & = \LA \nabla_i F_{ik} , F_{jk} \RA + \LA F_{ik} , \nabla_i F_{jk} \RA - \frac{1}{4} \nabla_j \LA F_{k \ell}, F_{k \ell} \RA.
\end{split}
\end{equation}
The second Bianchi identity reads
\begin{equation*}
\begin{split}
\LA F_{ik} , \nabla_i F_{jk} \RA & = \LA F_{ik} , \nabla_j F_{ik} \RA + \LA F_{ik} , \nabla_k F_{ji} \RA \\
& = \frac{1}{2}\LA F_{ik}, \nabla_j F_{ik} \RA = \frac{1}{4} \nabla_j \LA F_{i k}, F_{i k} \RA.
\end{split}
\end{equation*}
Substituting into (\ref{sdiv}) gives
\begin{equation}%\label{sidentity}
\begin{split}
\nabla_i S_{ij} = \LA D^*F_k , F_{kj} \RA.
\end{split}
\end{equation}
Taking another divergence, we obtain
\begin{equation}\label{sidentity}
\begin{split}
\nabla_j \! \nabla_i S_{ij} = \nabla_j \LA D^*F_k , F_{kj} \RA & = \LA \nabla_j D^*F_k, F_{kj} \RA + \LA D^*F_k, \nabla_j F_{kj} \RA \\
& = - \LA D D^*F , F \RA + |D^*F|^2.
\end{split}
\end{equation}

Recall that under (YM), the curvature $F = F_{A(t)}$ evolves according to
$$\left(\frac{\partial}{\partial t} + D D^* \right) F= 0.$$
Taking an inner-product with $F$ and adding (\ref{sidentity}), we obtain the (coordinate-independent) identity
\begin{equation}\label{divergenceevolution}
\begin{split}
\frac{1}{2} \frac{\partial}{\partial t} |F|^2 + |D^*F|^2 = \nabla^i \nabla^j S_{ij}.
\end{split}
\end{equation}
Integrating (\ref{divergenceevolution}) against $\chi,$ compactly supported, yields
\begin{equation}\label{chienidentity}
\begin{split}
\frac{1}{2} \frac{d}{dt} \int | F |^2 \chi \, dV + \int | D^*F |^2 \chi \, dV & = \int S_{ij} \nabla^i \nabla^j \chi \, dV.
\end{split}
\end{equation}

Note that $S_{ij}$ is traceless in dimension four. Further let $\chi = \eta \cdot \varphi,$ and write
$$X^i = \nabla^i \eta, \qquad \nabla^i X^j = \mu g^{ij} + h^{ij}.$$
for an arbitrary smooth function $\mu = \mu(x).$ Then
\begin{equation*}
\begin{split}
\nabla^i \nabla^j \chi %& = \nabla^i \nabla^j \eta \varphi + \nabla^i \eta \nabla^j \varphi + \nabla^j \eta \nabla^i \varphi + \eta \nabla^i \nabla^j \varphi \\
& = \left( \mu g^{ij} + h^{ij}\right) \varphi + X^i \nabla^j \varphi + X^j \nabla^i \varphi + \eta \nabla^i \nabla^j \varphi.
\end{split}
\end{equation*}
Substituting into (\ref{chienidentity}), we have
\begin{equation}\label{etaenidentity}
\begin{split}
\frac{1}{2} \frac{d}{dt} \int | F |^2 \eta\varphi \, dV + \int | D^*F |^2 \eta\varphi \, dV & = \int S_{ij} \left( h^{ij} \varphi + \left( 2 X^i + \eta \nabla^i \right) \nabla^j \varphi \right) \, dV.
\end{split}
\end{equation}

We now let
$$\eta = \frac{r^2}{2}, \qquad X = \nabla \eta.$$
Then by Gauss's Lemma, we have
$$X = X^i \frac{\partial}{\partial X^i} = r \frac{\partial}{\partial r}.$$
%By Gauss's Lemma, we have $X^i = \nabla^i \eta$ as above.
Note further that
$$\nabla^i X^j = g^{ij} + h^{ij}$$
with
\begin{equation}\label{Cgsmallness}
|h^{ij}|_g \leq C_g r^2.
\end{equation}
Here $C_g$ is a constant depending on the covariant derivatives of $Rm_g,$ which may be taken arbitrarily small on a unit ball after rescaling.

Assuming $\varphi = \varphi(r),$ we compute
\begin{align*}
\nabla^j \varphi = \frac{X^j}{r} \varphi'(r), \quad \qquad &
\nabla^i \nabla^j \varphi = \left( g^{ij} + h^{ij} \right)\frac{\varphi'(r)}{r} + \frac{X^i X^j}{r^2} \left( \varphi''(r) - \frac{\varphi'(r)}{r} \right) \\
\Delta \varphi & = \varphi''(r) + \frac{3 + h}{r} \varphi'(r)
\end{align*}
where $h = h^{ij}g_{ij}.$ Moreover
\begin{align*}
\left( 2 X^i + \frac{r^2}{2} \nabla^i \right) \nabla^j \varphi & = \left( g^{ij} + h^{ij} \right) \frac{r \varphi'(r)}{2} + \frac{X^i X^j}{2} \left( \varphi''(r) + \frac{3}{r} \varphi'(r)\right) \\
& = \left( g^{ij} + h^{ij} - X^i X^j \frac{h}{r^2} \right) \frac{X(\varphi)}{2} + \frac{X^i X^j }{2}\Delta \varphi.
\end{align*}
Substituting into (\ref{etaenidentity}) yields
\begin{equation}\label{priortoenineq}
\begin{split}
\frac{1}{2} \frac{d}{dt} \int | F |^2 \varphi \, r^2 dV  + \int | D^*F |^2 \varphi \, r^2 dV
& = \int X^i X^j S_{ij} \Delta \varphi \, dV + \int H_\varphi^{ij} S_{ij} \, dV
\end{split}
\end{equation}
where
\begin{align*}
H_\varphi^{ij} & = 2 h^{ij} \varphi + \left( h^{ij} - \frac{X^i X^j }{r^2}h \right) X(\varphi).
\end{align*}

Fix a radial cutoff $\varphi_1(r)$ for $B_1 \subset B_2,$ satisfying
$$| \nabla \varphi_1 | \leq 2, \qquad |\Delta_{\R^4} \varphi_1 | \leq 8$$ %***rev2
and put $\varphi_\lambda(r) = \varphi_1(r/\lambda)$ for $\lambda \leq 1.$
Note that $|X(\varphi_\lambda)| \leq 4,$ hence
\begin{equation}\label{Hestimate}
\left| H_{\varphi_\lambda}^{ij} \right|_g \leq C_g r^2
\end{equation}
where $C_g$ is a multiple of the constant of (\ref{Cgsmallness}).

With the exception of \S 3 below, \emph{we will assume henceforth that the metric is flat}, i.e.
$$h^{ij} = H^{ij} = 0.$$
The case of a general metric follows by carrying a small error term through the calculations.

Define
\begin{equation}\label{Sdefinition}
S(\lambda , \tau_1, \tau_2) = 2 \int_{\tau_1}^{\tau_2} \!\!\!\! \int X^i X^j S_{ij} \Delta \varphi_\lambda \, dV dt.
\end{equation}
Denote the annulus
$$U_\lambda^\rho = \bar{B}_\rho \setminus B_\lambda$$
and notice the obvious bound
\begin{equation}\label{obviousbound}
| S(\lambda, \tau_1 , \tau_2) | \leq C \int_{\tau_1}^{\tau_2} \!\!\!\! \int_{U_\lambda^{2\lambda}} |F|^2 \, dV dt.
\end{equation}
Finally, let $\varphi = \varphi_\lambda$ in (\ref{priortoenineq}), and integrate in time. We obtain the localized and weighted energy identity:
\begin{equation}\label{mainenidentity}
\int | F(\tau_2) |^2 \varphi_\lambda r^2 dV + 2 \int_{\tau_1}^{\tau_2} \!\!\!\! \int | D^*F |^2 \varphi_\lambda r^2 dV dt = \int | F(\tau_1) |^2 \varphi_\lambda r^2 dV + S(\lambda , \tau_1, \tau_2).
\end{equation}

\begin{lemma}\label{illustrativelemma} (Baby case of Proposition \ref{taubarprop}, p. \pageref{taubarprop}.)
Let $$0 \leq \tau_1 < \tau_2 < T, \qquad 0 < \epsilon \leq E, \qquad \lambda_1 > 0, \qquad  \lambda_2 = \frac{\lambda_1}{10} \sqrt{\frac{\epsilon}{E}}.$$ Assume
\begin{equation}\label{illustrativeassumptions}
\begin{aligned}
& \int_{B_{\lambda_2}} | F(\tau_2) |^2 \, dV \leq E, \qquad \qquad  & \int_{ U_{\lambda_1/2}^{\lambda_1} } |F(\tau_1)|^2 \, dV \geq \epsilon \\
& \int_{U_{\lambda_2}^{2\lambda_1}} |F(\tau_2)|^2 \, dV \leq \frac{\epsilon}{100}, \qquad \qquad  &  S(\lambda_1, \tau_1, \tau_2) \geq - \frac{\epsilon \lambda_1^2}{100}.
\end{aligned}
\end{equation}
Then
$$ \int_{\tau_1}^{\tau_2} \!\!\!\! \int_{B_{2 \lambda_1}} |D^*F|^2 \, dV dt > \frac{\epsilon}{100}.$$
In particular, if $M$ is compact, then
\begin{equation*}%\label{illustrativeenergydissip}
YM(\tau_2) + \frac{\epsilon}{100} < YM(\tau_1).
\end{equation*}
%In general, the assumption on $S(\lambda_1, \tau_1, \tau_2)$ will not be justified. 
%$$\int_{U_{\lambda_1}^{2\lambda}} |F(T)|^2 \, dV + \int_{\tau}^T \!\!\!\! \int_{B_{2\lambda}} |D^*F|^2 \, dV dt > \frac{\epsilon}{100}.$$
\end{lemma}
\begin{proof} Direct from (\ref{mainenidentity}), with $\lambda = \lambda_1.$
\end{proof}

\subsection{Idea of the proof}\label{explanatoryremark} We sketch the proof of long-time existence, using the above Lemma as an illustration.

Lemma \ref{illustrativelemma} may be summarized as follows: if a certain amount of energy concentrates from scale $\lambda_1$ to a smaller scale $\lambda_2,$ then we pay a small ``tax'' in overall energy. Since the total energy is nonnegative and decreasing, this could only occur finitely many times under (YM). Therefore, if the assumptions (\ref{illustrativeassumptions}) were all justified, the curvature scale (see Definition \ref{lambdadef} below) would remain bounded away from zero, ruling out blowup.
%We may state the conclusion of the above Lemma as follows: if, between time $\tau_1$ and the later time $\tau_2,$ the energy concentrates from scale $\lambda_1$ to the smaller scale $\lambda_2,$ then a small amount must dissipate. Since the total energy is finite, we would conclude that the energy scale remains bounded away from zero, thereby ruling out blowup.

Unfortunately, the last item of (\ref{illustrativeassumptions}) may fail; in other words, for small $\lambda,$ the cutoff term in (\ref{mainenidentity}) may be too large. %This causes the first major difficulty of the proof.
Fortunately, moving the cutoff from $\lambda$ to $\sqrt{\lambda}$ has just the effect of solving this problem. %In this sense, the lemma is only a baby case, and additional input is required to carry the argument through.
%The first difficulty is to find a way, i.e. to control the boundary term in (\ref{mainenidentity}).

%It is tempting to look for special estimates under (YM) for the curvature quantity $S(\lambda, \tau_1, \tau_2),$ which would vanish identically in the case that the connection is Yang-Mills.
%In practice, our solution is simply to move the cutoff from $\lambda$ to $\sqrt{\lambda}.$ As we shall now explain, this has just the right effect.

Recall that in her proof of removal of singularities for Yang-Mills fields \cite{uhlenbeckremov}, Uhlenbeck gave us the following optimal estimate. Supposing that a Yang-Mills connection over $U_\lambda^1$ has energy less than $\epsilon,$ its curvature must decay as
\begin{equation}\label{uhlenbeckdecay}
|F(x)| \leq C \sqrt{\epsilon} \left(\frac{\lambda^2}{|x|^4} + 1 \right) \qquad \left( \lambda \leq x \leq 1 \right).
\end{equation}
The original proof directly uses the Yang-Mills equation and Bianchi identity, together with an eigenvalue estimate for the connection in Coulomb gauge (which is trickier). The gauge fixing can also be performed separately on $S^3$---see R\aa de \cite{radecay}.

Let us assume that Uhlenbeck's decay estimate (\ref{uhlenbeckdecay}) holds in the present (parabolic) situation, with $\lambda = \lambda_1.$ %We now move the cutoff in (\ref{mainenidentity}) from the scale $\lambda$ to $\sqrt{\lambda}.$
Note that the curvature at radius $\sqrt{\lambda}$ is bounded by $\sqrt{\epsilon}$ times a constant. Then (\ref{obviousbound}) reads
\begin{align*}\label{key}
\left| S\left( \sqrt{\lambda}, \tau_1, \tau_2 \right) \right| & \leq C \int_{\tau_1}^{\tau_2} \!\!\!\! \int_{U_{\sqrt{\lambda}}^{ 2\sqrt{\lambda} }} |F|^2 \, dV dt \\
& \leq C \left( \tau_2 - \tau_1 \right) \epsilon ( \sqrt{\lambda} )^4 \\
& \leq C \left(\tau_2 - \tau_1 \right) \epsilon \lambda^2.
\end{align*}
Hence, %and with the cutoff at radius $\sqrt{\lambda},$
the fourth item of (\ref{illustrativeassumptions}) would indeed be justified, provided that
$$\tau_2 - \tau_1 < \frac{1}{100 C}.$$

However, after moving the cutoff, the weight $r^2$ in (\ref{mainenidentity}) can no longer be ignored in the intermediate region $\lambda \leq r \leq \sqrt{\lambda}.$ This causes the second major difficulty of the proof.

In view of (\ref{uhlenbeckdecay}), the weight is not large enough to affect the first and third terms of (\ref{mainenidentity}), and the argument of Lemma \ref{illustrativelemma} still yields
\begin{equation}\label{explanatorydstarfbound}
\int_{\tau_1}^{\tau_2} \!\!\!\! \int_{B_{2\sqrt{\lambda}}} |D^*F|^2 \, r^2 dV dt \geq c \epsilon \lambda^2.
\end{equation}
But, thinking of $|D^*F|$ as a subharmonic function on $\R^4,$ we would only expect $|D^*F(x)| \leq \delta/|x|^2.$ In this case, the $r^2$ factor in (\ref{explanatorydstarfbound}) would clearly prevent us from extracting a substantive lower bound on $\int \!\! \int |D^*F|^2 \, dV dt.$

To overcome this second problem, one simply needs another (optimal) estimate. The identity
$$D^* D^* F = 0$$
suggests that the co-closed 1-form $D^* F$ may decay more strongly than the Green's function. Let us assume
\begin{equation}\label{explanatorysharpdstarfbound}
\left( \int_{\tau_1}^{\tau_2} |D^*F(x) |^2 dt \right)^{1/2} \leq C \delta_\lambda \frac{\lambda}{|x|^3} \qquad \left( \lambda \leq x \leq 2\sqrt{\lambda} \right)
\end{equation}
where
$$\delta_\lambda^2 = \int_{\tau_1}^{\tau_2} \!\!\!\! \int_{B_{\lambda}} |D^*F|^2 \, dV dt.$$
Both (\ref{uhlenbeckdecay}) and (\ref{explanatorysharpdstarfbound}) will turn out to be essentially justified under (YM). %---see Proposition \ref{shorttimeprop} below.

From (\ref{explanatorysharpdstarfbound}), we then have
\begin{align}\label{explanatorydstarfsecondsharpbound}
\nonumber \int_{\tau_1}^{\tau_2} \!\!\!\! \int_{B_{2\sqrt{\lambda}}} |D^*F|^2 \, r^2 dV dt & = \int_{\tau_1}^{\tau_2} \!\! \left(\int_{B_{\lambda}} + \int_{B_{2\sqrt{\lambda}} \setminus B_{\lambda}} \right) |D^*F|^2 \, r^2 dV dt\\
\nonumber & \leq \delta_\lambda^2 \lambda^2 + C \delta_\lambda^2 \int_\lambda^{2\sqrt{\lambda}} \frac{ \lambda^2}{r^6} \, r^5 dr \\
\nonumber & \leq C \delta_\lambda^2 \lambda^2 \int_\lambda^{2\sqrt{\lambda}} \, \frac{dr}{r} \\
& \leq C \delta_\lambda^2 \lambda^2 | \log \lambda |.
\end{align}
Combining (\ref{explanatorydstarfbound}) and (\ref{explanatorydstarfsecondsharpbound}), we obtain
\begin{align*}
C \delta_\lambda^2 \lambda^2 | \log \lambda | \geq \int_{\tau_1}^{\tau_2} \!\!\!\! \int_{B_{2\sqrt{\lambda}}} |D^*F|^2 \, r^2 dV dt \geq c \epsilon \lambda^2.
\end{align*}
Cancelling $\lambda^2$ and rearranging yields
\begin{equation}\label{explanatoryharmonicbound}
\delta_\lambda^2 \geq \frac{c \epsilon}{|\log \lambda|}.
\end{equation}
%In the realistic setting, where time must be taken into account, this crucial lower bound will be provided by Proposition \ref{taubarprop} below.

Now assume, for the sake of contradiction, that blowup occurs at time $T < \infty,$ and put $\lambda_i = \kappa^{i} \to 0$ for a small constant $\kappa > 0.$ Choose times $T - c \leq \tau_i < T$ at which the curvature is on the scale $\lambda_i,$ and let
$$\delta_i^2 = \int_{\tau_i}^{\tau_{i + 1}} \!\!\!\! \int_{B_{\lambda_i}} |D^*F|^2 \, dV dt.$$
The bound (\ref{explanatoryharmonicbound}) reads
$$\delta_i^2 \geq \frac{c \epsilon}{|\log \lambda_i |} =  \frac{c \epsilon}{i}.$$
Then
$$YM(0) \geq \int_{0}^{T} \!\!\!\! \int_{M} |D^*F|^2 \, dV dt \geq \sum \delta_i^2 \geq c \epsilon \sum \frac{1}{i}.$$
This contradicts the divergence of the harmonic series.

%\subsection{Notion of the proof} % There appears to be no shortcut from a detailed inductive argument based on the more subtle (\ref{mainenidentity}) and optimally sharp decay estimates for both $|F|$ and $|D^*F|.$

%Under the ideal assumption that the stress quantity $S$ is very small, the lemma states that concentration of almost all energy, from the scale $\lambda$ to the smaller scale $\lambda_1,$ entails a small overall loss of energy. As this can occur only a finite number of times, energy concentration would be ruled out---which of course cannot be the case in general, due to the well-studied formation of infinite-time singularities under (YM). Thinking of (\ref{mainenidentity}) not as a monotonicity formula, but rather as an approximate conservation law, one has reason to expect that $S$ is much smaller than the total curvature---indeed, in the case of a smooth, static Yang-Mills connection on the unit ball, it is clear that $S$ must vanish identically (an instance of Noether's Theorem).

%The reality is rather more complicated. One has only the crude bound (\ref{obviousbound}). If one takes the cutoff slightly beyond, however, this is sufficient ({\it c.f.} \ref{Smidwaybound}). In fact it is necessary to have not only optimal decay of $|F|$ (due to Uhlenbeck \cite{uhlenbeckremovable} in the static case), but of $r |D^*F|.$ ({\it Cf.} Proposition \ref{shorttimeprop}$b,$ p. \pageref{shorttimepropb}.) We obtain a divergent series and are able to close the bootstrap over a unit time.

\section{$\epsilon$-regularity}\label{epsilonregsection}

In what follows, $\epsilon_0 > 0$ will be a universal constant, which may decrease repeatedly in the course of the proofs. The constant $R_0 > 0,$ which may also decrease, will depend on the local geometry of $M,$ and also possibly on $k \in \N.$ % rev3

Note that the results of this section do not assume the metric on $M$ to be flat. All later results may be stated for general metrics in a similar fashion.

\begin{prop}\label{epsilonreg} Let
$$x_0 \in M, \quad \quad k \in \N, \quad \quad 0 < R < R_0, \quad \quad R^2 \leq \tau < T.$$
Write $B_R = B_R(x_0),$ and assume
$$\sup_{\tau - R^2 \leq t \leq \tau}\|F(t)\|^2_{L^2(B_{R})} \leq \epsilon < \epsilon_0.$$
Then for
$$\tau - \frac{R^2}{2} \leq t \leq \tau$$
there hold
\begin{align}\tag{$a$}
\|\nabla^{(k)} F(t)\|_{L^\infty (B_{R/2})} & \leq C_k R^{-3-k} \, \|F\|_{L^2\left( B_R \times \LB \tau - R^2, \tau \RB \right)} \leq C_k R^{-2-k} \sqrt{\epsilon} \\
\tag{$b$}
\|\nabla^{(k)} D^*F(t)\|_{L^\infty (B_{R/2})} & \leq C_k R^{-3-k} \, \|D^* F\|_{L^2 \left( B_R \times \LB \tau - R^2, \tau \RB \right)}.
\end{align}
Here $C_k$ is a constant depending only on $k.$
\end{prop}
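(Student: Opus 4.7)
The plan is a standard parabolic $\epsilon$-regularity argument: combine a Bochner–Weitzenb\"ock inequality with Moser iteration for $|F|$, then bootstrap to higher derivatives and to $D^*F$ via linear parabolic theory on successively shrinking cylinders. The starting point is the Weitzenb\"ock equation, which under (YM) and the second Bianchi identity $DF = 0$ takes the schematic form
$$\left(\frac{\partial}{\partial t} - \Delta\right) F = F \ast F + Rm \ast F,$$
where $\ast$ denotes a pointwise bilinear algebraic contraction. Pairing with $F$ and invoking the Kato inequality yields the subparabolic differential inequality
$$\left(\frac{\partial}{\partial t} - \Delta\right)|F|^2 + 2|\nabla F|^2 \leq C\bigl(|F| + |Rm|\bigr)|F|^2,$$
to which the classical parabolic Moser machinery will apply.

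The heart of the matter is the $L^2 \to L^\infty$ step. I would multiply the inequality by $|F|^{2p-2}\chi^2$ for a parabolic cutoff $\chi$, integrate by parts, and combine with the Sobolev embedding on $\R^4$ to obtain a reverse-H\"older relation for successively larger $L^p$ norms of $|F|$. The cubic term $|F|^3$ is borderline and is precisely where the smallness hypothesis is used: by H\"older and Sobolev,
$$\int |F|\cdot |F|^{2}\cdot|F|^{2p-2}\chi^2 \, dV \leq \|F\|_{L^2(B_R)} \, \bigl\||F|^p\chi\bigr\|_{L^4(B_R)}^2 \leq C\sqrt{\epsilon}\,\bigl\|\nabla(|F|^p\chi)\bigr\|_{L^2(B_R)}^2,$$
and for $\epsilon_0$ sufficiently small this is absorbed into the good term $|\nabla F|^2$. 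Iterating in the standard way yields
$$\sup_{B_{R/2}\times[\tau - R^2/2,\, \tau]} |F|^2 \leq C R^{-6} \|F\|_{L^2(B_R \times [\tau - R^2,\, \tau])}^2,$$
which is the $k=0$ case of (a); combining with the $L^\infty_t L^2_x$ hypothesis gives the $R^{-2}\sqrt{\epsilon}$ form.

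Higher $k$ in (a) and part (b) then follow by bootstrapping. Once $|F|$ is controlled on a slightly smaller cylinder, each $\nabla^{(k)}F$ satisfies a linear parabolic equation of Bochner type whose coefficients are already estimated inductively, and one further Moser iteration (or direct parabolic $L^p$ theory) yields the bound with the correct power $R^{-3-k}$. For part (b), commuting $D^*$ through (YM) and using Bianchi produces
$$\left(\frac{\partial}{\partial t} - \Delta\right) D^*F = F \ast D^*F + Rm \ast D^*F + (\nabla Rm) \ast F,$$
a linear parabolic equation in $D^*F$ with coefficients controlled by (a), so one more Moser iteration delivers the claimed $L^2$-to-$L^\infty$ bound. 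The principal technical obstacle is the first step: one must handle $|F|^3$ using the $L^2$-smallness exactly once while preserving the sharp scaling $R^{-2}\sqrt{\epsilon}$. Secondary care is required in part (b) to compute $\partial_t(D^*F)$ correctly given that $D^*$ itself depends on the evolving connection, which is what produces the commutator term $F \ast D^*F$ above.
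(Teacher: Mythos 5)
Your overall strategy---Bochner--Weitzenb\"ock to get a subparabolic inequality for $|F|$, Moser iteration absorbing the cubic term via Sobolev and the $L^2$-smallness hypothesis, then bootstrap to higher derivatives and to $D^*F$ via linear parabolic theory on nested cylinders---is the standard $\epsilon$-regularity argument and matches the approach of the cited reference (the paper simply invokes \cite{instantons}, Prop.\ 3.2, rather than reproving it). The scaling is right: Moser iteration over the parabolic cylinder $B_R\times[\tau-R^2,\tau]$ of spacetime volume $\sim R^6$ gives $\sup|F|^2\leq CR^{-6}\|F\|^2_{L^2(Q_R)}$, hence $R^{-3-k}$ after bootstrap, and combining with the $L^\infty_t L^2_x$ hypothesis yields the $R^{-2-k}\sqrt{\epsilon}$ form.

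There is one genuine error: the evolution equation you write for $D^*F$ contains a spurious inhomogeneous term $(\nabla Rm)\ast F$. The correct computation---differentiate $D^*_A F$ in time, use $\partial_t A = -D^*F$ for the commutator $[\partial_t, D^*_A]F = D^*F\# F$, use $D^*D^*F=0$ so $\Delta_A D^*F = D^*D\,D^*F$, and apply the Weitzenb\"ock identity on 1-forms (which produces a zeroth-order $\mathrm{Ric}$ term but no derivative of $Rm$)---yields the \emph{closed} linear parabolic equation
\begin{equation*}
\left(\partial_t - \Delta\right) D^*F = F\# D^*F + \mathrm{Ric}\# D^*F,
\end{equation*}
which is exactly the form $(\partial_t+\Delta_A)D^*F = F\#D^*F$ used in \S\ref{splitevolutionsection} of the paper, rewritten for the rough Laplacian. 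This matters for the sharpness of $(b)$: if a genuine source $(\nabla Rm)\ast F$ were present, Moser iteration would deliver a bound involving $\|F\|_{L^2}$ on the right-hand side, not the stated form $C_k R^{-3-k}\|D^*F\|_{L^2(Q_R)}$. With the corrected (homogeneous in $D^*F$) equation, whose coefficients are controlled uniformly by part $(a)$, the estimate $(b)$ follows as you describe.
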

\begin{proof} See \cite{instantons}, Proposition 3.2.\footnote{The present form of $(a)$ follows by combining the second item in \cite{instantons}, Proposition 3.2, with the local energy inequality.

Note that previous versions of this article, as well as \cite{instantons}, neglected to mention the dependence of $R_0$ on $k.$ Alternatively, if one allows the constants $C_k$ to depend on the geometry of $M,$ then one may let $R_0 = \min \LB inj(M), 1 \RB.$} %rev3
\end{proof}

\begin{lemma}\label{antibubble} %Let %$R,\tau$ with $$ 0 < R < R_0 \quad \quad R^2 \leq \tau < T.$$
For $R < R_0,$ assume
\begin{equation*}
\begin{split}
\sup_{0 \leq t \leq \tau} \| F(t) \|^2_{L^2 \left( B_R \right)} \leq E, \quad \quad
\|D^*F\|_{L^2 \left( B_R \times \LB 0, \tau \RB \right)} \leq \delta
\end{split}
\end{equation*}
and put
$$\gamma = 2 \delta \left(\delta + 4 \frac{\sqrt{\tau E }}{R} \right).$$

\noindent $(a)$ For $0 \leq t \leq \tau,$ there hold
\begin{align*}
\|F(\tau)\|^2_{L^2(B_{R/2})} \leq \|F(t)\|^2_{L^2(B_R)} + \gamma, \quad \quad
\|F(t)\|^2_{L^2(B_{R/2})} \leq \|F(\tau)\|^2_{L^2(B_R)} + \gamma.
\end{align*}

\vspace{2mm}

\noindent $(b)$ If
\begin{equation}\label{antibubblebest}
\|F(\tau)\|^2_{L^2(B_{R})} + \gamma \leq \epsilon < \epsilon_0
\end{equation}
then for $R^2 \leq t \leq \tau$ and $k \in \N,$ there hold
\begin{equation*}
\begin{split}
\| \nabla^{(k)} F(t) \|_{L^\infty\left( B_{R / 2} \right)} \leq C_k R^{-2-k} \, \sqrt{ \epsilon}, \quad \quad
\|\nabla^{(k)} D^*F(t)\|_{L^\infty (B_{R/2})} \leq C_k R^{-3-k} \, \delta.
\end{split}
\end{equation*}
\end{lemma}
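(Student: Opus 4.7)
For part (a), the starting point is the local energy identity (\ref{chienidentity}) with a radial cutoff $\chi$ compactly supported in $B_R$, identically $1$ on $B_{R/2}$, satisfying $|\nabla\chi|\le C/R$ and $|\nabla^2\chi|\le C/R^2$. Integrating this identity in time from $t$ to $\tau$ produces
\[
\int |F(\tau)|^2\chi\,dV \;-\; \int |F(t)|^2\chi\,dV \;+\; 2\int_t^\tau\!\!\int |D^*F|^2\chi\,dV\,ds \;=\; 2\int_t^\tau\!\!\int S_{ij}\nabla^i\nabla^j\chi\,dV\,ds.
\]
A direct estimate $|S_{ij}|\le C|F|^2$ together with $|\nabla^2\chi|\le C/R^2$ would yield a remainder of order $\tau E/R^2$, with no factor of $\delta$, which would be useless. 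The essential step is to integrate by parts in the right-hand side, using the divergence identity $\nabla^i S_{ij}=\langle D^*F_k,F_{kj}\rangle$ derived in \S\ref{stressection}, to obtain
\[
\int S_{ij}\nabla^i\nabla^j\chi\,dV \;=\; -\int \langle D^*F_k,F_{kj}\rangle\nabla^j\chi\,dV.
\]
Cauchy-Schwarz in space followed by Cauchy-Schwarz in time then bounds the time-integrated right-hand side by $(4\sqrt{\tau E}/R)\delta$. Combined with the trivial dissipation bound $\int_t^\tau\!\!\int |D^*F|^2\chi\le\delta^2$, retained or dropped depending on the direction of the desired inequality, both claims in part (a) follow with $\gamma = 2\delta(\delta + 4\sqrt{\tau E}/R)$.

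For part (b), the hypothesis (\ref{antibubblebest}) combined with part (a) yields $\|F(s)\|^2_{L^2(B_{R/2})}\le\epsilon$ for every $s\in[0,\tau]$. I then invoke Proposition \ref{epsilonreg} on a ball of radius $R/2$ at each $t$ with $t \ge R^2$: the small-energy hypothesis holds throughout the parabolic cylinder of spatial radius $R/2$, yielding the pointwise bounds on $|\nabla^{(k)}F(t)|$ with constants of the asserted order (any factor-of-two discrepancy in the radius of the conclusion is absorbed into the constants $C_k$). The bound on $|\nabla^{(k)}D^*F(t)|$ comes from part $(b)$ of Proposition \ref{epsilonreg} together with the hypothesis $\|D^*F\|_{L^2(B_R\times[0,\tau])}\le\delta$, which trivially dominates the $L^2$ norm of $D^*F$ on the relevant sub-parabolic cylinder.

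The hard part is the single integration by parts in part (a): without exploiting the divergence structure of the stress-energy tensor, the remainder in the energy identity would scale like $\tau E/R^2$ with no factor of $\delta$, and the lemma would be powerless in the small-dissipation regime $\delta\ll 1$ where it is subsequently used. In the curved case, the identity $\nabla^i S_{ij} = \langle D^*F_k, F_{kj}\rangle$ acquires lower-order curvature error terms, which are absorbed by taking $R_0$ small enough that the local curvature contribution $C_g r^2$ from \S\ref{stressection} is negligible.
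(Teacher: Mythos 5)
Part (a) is correct and essentially identical to the paper's argument. You start from (\ref{chienidentity}) and integrate by parts once to reach $-\int\langle D^*F_k,F_{kj}\rangle\nabla^j\chi$; the paper starts from (\ref{divergenceevolution}) multiplied by $\varphi$ and integrates by parts once to reach the same expression, so the two routes collapse to the same computation. Your observation that the integration by parts is essential (otherwise the error scales like $\tau E/R^2$) is the right reason the lemma has teeth in the $\delta\ll 1$ regime, and your handling of the dissipation term (retained for one direction, dropped for the other) correctly reproduces $\gamma = 2\delta(\delta + 4\sqrt{\tau E}/R)$.

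Part (b) has a genuine gap. From part (a) as you apply it, you obtain $\sup_{0\le s\le\tau}\|F(s)\|^2_{L^2(B_{R/2})}\le\epsilon$. Applying Proposition \ref{epsilonreg} with ball radius $R/2$ then gives $L^\infty$ control only on $B_{R/4}$, not on $B_{R/2}$. Your remark that ``any factor-of-two discrepancy in the radius of the conclusion is absorbed into the constants $C_k$'' is false: the $L^\infty$ norm on the larger ball $B_{R/2}$ involves points whose neighborhoods are not covered by the small-energy hypothesis, and no choice of constant repairs a missing domain. The paper's actual argument obtains the slightly larger control $\|F(t)\|^2_{L^2(B_{3R/4})}\le 2\epsilon$ for all $0<t<\tau$ — this requires rerunning the derivation in (a) with a cutoff supported between $B_{3R/4}$ and $B_R$, at the cost of a harmless constant in $\gamma$ that is absorbed into the factor $2$. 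With $L^2$ control on $B_{3R/4}$, one then applies $\epsilon$-regularity on balls of radius $R/4$ centered at points of $B_{R/2}$ (all of which lie in $B_{3R/4}$) and takes the union, which legitimately yields the $L^\infty$ bound on $B_{R/2}$ with constants $C_k R^{-2-k}$. The fix is routine, but as written your proof of (b) does not establish the stated conclusion on $B_{R/2}$.
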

\begin{proof} Let $\varphi= \varphi_{R/2}$ be a cutoff for $B_{R/2} \subset B_{R}$ with $|\nabla \varphi | \leq 4 / R.$ Integrating by parts once in (\ref{divergenceevolution}), we have
\begin{equation}
\begin{split}
\frac{1}{2} \frac{d}{dt} \int | F |^2 \varphi \, dV + \int | D^*F |^2 \varphi \, dV + \int \LA D^*F^i, F_{ij} \nabla^j \varphi \RA \, dV = 0.
\end{split}
\end{equation}
Integrating from $t$ to $\tau$ and applying H\"older's inequality on the last term, we obtain
\begin{align*}
%\frac{1}{2} \left|  \int \left(|F(\tau)|^2 - |F(t)|^2 \right) \varphi dV \right| & \leq \delta^2 + \| \nabla \varphi \|_{L^\infty} \delta \sqrt{(\tau - t)E} \\
\left|  \int \left(|F(\tau)|^2 - |F(t)|^2 \right) \! \varphi \, dV \right| & \leq \gamma.
\end{align*}
This implies both items of ($a$).

To prove ($b$), note that (\ref{antibubblebest}) and ($a$) imply
$$\|F(t)\|^2_{L^2\left(B_{3R/4} \right)} \leq 2 \epsilon \quad \quad \left( \forall \,\,\,\,  0 < t < \tau \right).$$
The result then follows from Proposition \ref{epsilonreg}, which may be applied for any $R^2 \leq \tau' \leq \tau.$ %***rev2
\end{proof}

\begin{lemma}\label{annularantibubble} Let $R, \tau$ as above, and replace $\gamma$ by $10 \gamma / (1 - \alpha),$ for $0 < \alpha < 1.$ Write
$$U_1 = U_{\alpha R}^R(x_0), \quad \quad U_2 = U_{\alpha^2 R}^{\alpha^{-1} R}(x_0)$$
and assume
\begin{equation*}
\begin{split}
\sup_{0 \leq t \leq \tau} \| F(t) \|^2_{L^2 \left( U_2 \right)} \leq E, \quad \quad
\|D^*F\|_{L^2 \left( U_2 \times \LB 0, \tau \RB \right)} \leq \delta.
\end{split}
\end{equation*}

\noindent $(a)$ For $0 \leq t \leq \tau,$ there hold
\begin{align*}
\|F(\tau)\|^2_{L^2(U_1)} \leq \|F(t)\|^2_{L^2(U_2)} + \gamma, \quad \quad
\|F(t)\|^2_{L^2(U_1)} \leq \|F(\tau)\|^2_{L^2(U_2)} + \gamma.
\end{align*}

\vspace{2mm}

\noindent $(b)$ If
\begin{equation*}%\label{antibubblebest}
\|F(\tau)\|^2_{L^2(U_2)} + \gamma \leq \epsilon < \epsilon_0
\end{equation*}
then for $R^2 \leq t \leq \tau$ and $k \in \N,$ there hold
\begin{equation*}
\begin{split}
\| \nabla^{(k)} F(t) \|_{L^\infty\left( U_1 \right)} \leq C_{k, \alpha} R^{-2-k} \, \sqrt{ \epsilon}, \quad \quad
\|\nabla^{(k)} D^*F(t)\|_{L^\infty (U_1)} \leq C_{k, \alpha} R^{-3-k} \, \delta.
\end{split}
\end{equation*}
\end{lemma}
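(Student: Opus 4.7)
The plan is to mirror the proof of Lemma \ref{antibubble}, using a radial cutoff adapted to the annular geometry in place of the concentric one. First I would choose $\varphi$, supported in $U_2$ and identically $1$ on $U_1$, with $|\nabla \varphi| \leq C_0/((1-\alpha)R)$ for a universal $C_0$. Since the distance from $\partial U_1$ to $\partial U_2$, both inside and outside, is at least $\alpha(1-\alpha)R \geq (1-\alpha)R/2$ for $\alpha \in [1/2,1)$, such a cutoff exists; the extra factor of $1/(1-\alpha)$ in $|\nabla\varphi|$ relative to Lemma \ref{antibubble} is exactly what ultimately produces the $10/(1-\alpha)$ factor in the new $\gamma$.

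For part (a), substitute $\varphi$ into the integration by parts of (\ref{divergenceevolution}) carried out in Lemma \ref{antibubble}, giving
\begin{equation*}
\frac{1}{2}\int(|F(\tau)|^2 - |F(t)|^2)\varphi\,dV + \int_t^\tau\!\!\!\int |D^*F|^2\varphi\,dV\,dt' = -\int_t^\tau\!\!\!\int \LA D^*F^i, F_{ij}\nabla^j\varphi \RA \,dV\,dt'.
\end{equation*}
Bounding the right-hand side by H\"older using the hypotheses on $\delta$ and $E$ together with $|\nabla\varphi|\leq C_0/((1-\alpha)R)$, and either dropping or retaining the nonnegative $|D^*F|^2$ integral as in the ball case, yields both inequalities of (a) with a constant bounded by $10\gamma/(1-\alpha)$ for suitable $C_0$.

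For part (b), I would cover $U_1$ by balls inscribed in $U_2$. Concretely, set $r = \alpha(1-\alpha)R/2$, so that $B_{2r}(x) \subset U_2$ for every $x \in U_1$. The hypotheses of Lemma \ref{antibubble} then hold on $B_{2r}(x)$ with the same $E$ and $\delta$, and its ball-version $\gamma_{\text{ball}} = 2\delta(\delta + 4\sqrt{\tau E}/(2r))$ is dominated by the annular $10\gamma/(1-\alpha)$ by the choice of $r$. The present hypothesis therefore gives $\|F(\tau)\|^2_{L^2(B_{2r}(x))} + \gamma_{\text{ball}} \leq \epsilon$, so Lemma \ref{antibubble}(b) applies on $B_{2r}(x)$ and yields
\begin{equation*}
\|\nabla^{(k)} F(t)\|_{L^\infty(B_r(x))} \leq C_k(2r)^{-2-k}\sqrt{\epsilon} = C_{k,\alpha}R^{-2-k}\sqrt{\epsilon},
\end{equation*}
and analogously for $D^*F$, valid for $(2r)^2 \leq t \leq \tau$, which is implied by $R^2\leq t\leq\tau$ since $2r \leq R/4$. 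Taking a supremum over $x \in U_1$ gives the stated $L^\infty$ bounds.

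The only real obstacle is constant-tracking: verifying that $\gamma_{\text{ball}}$ is absorbed into $10\gamma/(1-\alpha)$ and that the $\alpha$-dependence acquired from $r \sim (1-\alpha)R$ is properly recorded in $C_{k,\alpha}$. There is no new analytic content beyond Lemma \ref{antibubble}; the annular case reduces to the ball case through the covering described above.
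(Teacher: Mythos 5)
Your proof is correct and follows essentially the same route as the paper, which just says ``As above, letting $\varphi$ be a cutoff for $U_1 \Subset U_2$'': part (a) by the annular cutoff in the integrated energy identity, and part (b) by reducing to interior balls contained in $U_2$ (the paper invokes Proposition~\ref{epsilonreg} directly, you route through Lemma~\ref{antibubble}(b), but this is cosmetic since that lemma is itself proved from Proposition~\ref{epsilonreg}). The geometric checks — that $B_{2r}(x)\subset U_2$ with $r = \alpha(1-\alpha)R/2$, that $\gamma_{\text{ball}}\leq 10\gamma/(1-\alpha)$, and that $(2r)^2\leq R^2$ so the time window is respected — all verify.
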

\begin{proof} As above, letting $\varphi$ be a cutoff for $U_1 \Subset U_2.$ %To prove ($b$), cover $U_1 \subset U_2$ by geodesic balls of radius $R,$ and apply Lemma \ref{epsilonreg}.
\end{proof}

\begin{prop}\label{spendingprop} For $R < R_0,$ assume 
\begin{equation}\label{spendingpropouterassn}
\| F(0) \|^2_{L^2(B_R)} \leq \epsilon_1 < \epsilon_0, \quad \quad
\sup_{0 \leq t < T} \| F(t) \|^2_{L^2\left( U_{R/2}^{R} \right)} \leq \epsilon_2 < \epsilon_0.
\end{equation}
Then
\begin{equation}\label{spendingpropest}
\| F(t) \|^2_{L^2(B_{R / 2})}  \leq \epsilon_1 e^{- c t / R^2} + C \epsilon_2 \left(1 - e^{- c t / R^2} \right) \qquad (0 \leq t < T).
\end{equation}
\end{prop}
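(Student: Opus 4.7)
The plan is to derive a linear differential inequality for the localized energy $f(t) := \int |F(t)|^2 \eta^2 \, dV,$ where $\eta = \eta_R$ is a smooth radial cutoff supported in $B_R,$ identically $1$ on $B_{R/2},$ with $|\nabla \eta| \leq 4/R.$ Since $f(t) \geq \|F(t)\|_{L^2(B_{R/2})}^2$ and $f(0) \leq \epsilon_1,$ the bound (\ref{spendingpropest}) follows immediately from
\begin{equation*}
f'(t) + \frac{c}{R^2}\, f(t) \leq \frac{C \epsilon_2}{R^2}, \qquad 0 \leq t < T,
\end{equation*}
via the standard integrating factor $e^{ct/R^2},$ which yields $f(t) \leq f(0) e^{-ct/R^2} + (C/c)\epsilon_2 (1 - e^{-ct/R^2}).$

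The easy half of this differential inequality comes from the computation in the proof of Lemma \ref{antibubble}(a) with $\varphi = \eta^2$: a single integration by parts followed by Young's inequality applied to the cross term $\int \LA D^*F^i, F_{ij}\cdot 2\eta \nabla^j \eta \RA \, dV$ produces
\begin{equation*}
f'(t) + \int |D^*F|^2 \eta^2 \, dV \leq 4 \int_{U_{R/2}^R} |F|^2 |\nabla \eta|^2 \, dV \leq \frac{C \epsilon_2}{R^2},
\end{equation*}
using the second hypothesis of (\ref{spendingpropouterassn}) uniformly in $t.$ Dropping the dissipation term already yields the linear bound $f(t) \leq \epsilon_1 + C\epsilon_2 t/R^2,$ but this is insufficient for exponential decay.

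The missing ingredient---and the main technical obstacle---is a gauge-invariant Gaffney--Friedrichs-type lower bound
\begin{equation*}
\int |D^*F|^2 \eta^2 \, dV \geq \frac{c}{R^2} \int |F|^2 \eta^2 \, dV - \frac{C' \epsilon_2}{R^2}.
\end{equation*}
To establish this, I would first verify, via a continuity/bootstrap argument based on Lemma \ref{antibubble}, that $\|F(t)\|_{L^2(B_R)}^2 \leq \epsilon_1 + C\epsilon_2 \leq \epsilon_0$ holds throughout $[0, T),$ at which point Uhlenbeck's small-energy gauge theorem \cite{uhlenbeckremov} produces a Coulomb gauge on $B_R$ satisfying $\|A\|_{L^4(B_R)} + R\|A\|_{L^\infty(B_{R/2})} \leq C \|F\|_{L^2(B_R)}.$ In this gauge, $D^*F = d^*F - *[A \wedge *F]$ and the Bianchi identity reads $dF = -[A, F];$ both nonlinear terms are $O(\|A\|_{L^\infty} \|F\|_{L^2})$ and can be absorbed for $\epsilon_0$ sufficiently small. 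The scaled linear Gaffney inequality applied to $\eta F$ then gives $\int |F|^2 \eta^2 \, dV \leq CR^2 \int |d^*F|^2 \eta^2 \, dV + C\int_{U^R_{R/2}} |F|^2 \, dV,$ completing the lower bound. The delicate part is executing this Poincar\'e step uniformly in $t$: one must carefully interleave the small-energy bootstrap from Lemma \ref{antibubble} with the Coulomb-gauge elliptic estimates, so that the constants $c, C'$ are genuinely universal and independent of the flow. Once this is in hand, combining both inequalities and applying Gr\"onwall as above yields (\ref{spendingpropest}).
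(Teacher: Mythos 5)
Your proposal is correct in outline but follows a genuinely different and heavier route than the paper's. The paper's proof of Proposition \ref{spendingprop} never fixes a gauge: it starts from the pointwise Bochner-type inequality $(\partial_t - \Delta)|F| \leq A|F|^2 + B|F|$, multiplies by $\varphi^2 |F|$, integrates by parts to produce the good term $\int |\nabla(\varphi |F|)|^2$, and gets coercivity from the scale-invariant Sobolev inequality $\int |\nabla(\varphi |F|)|^2 \geq C_S^{-1}\bigl(\int (\varphi |F|)^4\bigr)^{1/2}$; the cubic nonlinearity $A\int \varphi^2 |F|^3 \leq A\|F\|_{L^2(B_R)}\bigl(\int(\varphi |F|)^4\bigr)^{1/2}$ is absorbed using the smallness $\|F(t)\|_{L^2(B_R)}^2 < \eta$, and the passage from $\bigl(\int(\varphi |F|)^4\bigr)^{1/2}$ to $R^{-2}\int(\varphi |F|)^2$ is H\"older on $B_R$. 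This Kato--Moser argument is gauge-free and valid at every $t \geq 0$, with the Riemannian error appearing only through the scalar $B R^2$, absorbed for $R < R_0$.

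Your route replaces this with a Gaffney-type lower bound on $\int |D^*F|^2 \eta^2$ via a Coulomb gauge. The plan is workable, but one step is overstated: Uhlenbeck's small-energy gauge theorem gives $\|A\|_{W^{1,2}(B_R)} \lesssim \|F\|_{L^2(B_R)}$, hence $L^4$ control on $A$; it does \emph{not} directly give $R\|A\|_{L^\infty(B_{R/2})} \lesssim \|F\|_{L^2(B_R)}$. An $L^\infty$ bound on $A$ requires a further elliptic bootstrap of $\Delta A = d^*F - \tfrac{1}{2}d^*[A,A]$ using pointwise bounds on $F$, i.e.\ the parabolic $\epsilon$-regularity of Proposition \ref{epsilonreg}, which is only available for $t \geq R^2$. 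This would leave the initial layer $[0,R^2)$ uncovered, and there the weaker linear energy bound $f(t) \leq \epsilon_1 + C\epsilon_2 t/R^2$ does not imply (\ref{spendingpropest}) when $\epsilon_2 \ll \epsilon_1$. The clean repair is to avoid $\|A\|_{L^\infty}$ entirely: estimate the commutator error $\int \eta^2 |A|^2 |F|^2 \leq \|A\|_{L^4(B_R)}^2 \|\eta F\|_{L^4}^2$, bound $\|\eta F\|_{L^4}^2 \leq C_S \int\bigl(|d(\eta F)|^2 + |d^*(\eta F)|^2\bigr)$ via Sobolev and the flat Weitzenb\"ock identity, and absorb for $\|A\|_{L^4}^2 \leq C\epsilon_0$ small. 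With this change your argument does close, but it imports Uhlenbeck's gauge theorem and the associated bootstrapping, none of which the paper's proof of this proposition needs.
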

\begin{proof} Let $\eta >\epsilon_0.$ Define $0 < T_0 \leq T$ to
be the maximal time such that
\begin{equation}\label{spendingpropassn}
\|F(t)\|^2_{L^2(B_R)} < \eta \quad \quad \left( \forall \,\,\,\, 0 \leq t < T_0\right).
\end{equation}
%Assume for the sake of contradiction that $T_0 < T.$ %***rev2

Let $v = |F|,$ and recall the differential inequality
$$\left( \partial_t + \Delta \right) v \leq A v^2 + Bv$$
where $A$ is a universal constant and $B$ is a multiple of $\| Rm \|_{L^\infty}.$

Let $\varphi = \varphi_{R/2}$ as above. Multiplying by $\varphi^2 v$ and integrating by parts, we obtain
\begin{equation*}
\begin{split}
\frac{1}{2} \frac{d}{dt} \left(\int \varphi^2 v^2\right) + \int \nabla(\varphi^2 v) \cdot \nabla v & \leq A\int \varphi^2 v^{3} + B \int \varphi^2 v^2 \\
\frac{1}{2} \frac{d}{dt} \left( \int \varphi^2 v^2 \right) + \int |\nabla(\varphi v)|^2  & \leq \int |\nabla \varphi |^2 v^2 + A \int \varphi^2 v^3 + B \int \varphi^2 v^2.
\end{split}
\end{equation*} %The supposedly missing factor of 2 remains as a factor of 1/2 before d/dt on the LHS.
For $0 \leq t < T_0,$ applying the Sobolev and H\"older inequalities on $B_R$ yields
\begin{equation*}
\frac{1}{2} \frac{d}{dt} \int \left(\varphi v\right)^2 + \left( \frac{1}{C_S} - A\sqrt{\eta} \right) \left(\int (\varphi v)^4 \right)^{1/2}  \leq \|\nabla \varphi\|^2_{L^{\infty}} \int_{U_{R/2}^{R}} v^2  + B \int \left( \varphi v \right)^2.
\end{equation*}
Applying H\"older's inequality on the left-hand side, and the assumption (\ref{spendingpropouterassn}) on the right-hand side, we obtain
\begin{equation}\label{spendingpropdiffineq}
\frac{d}{dt} \int \left(\varphi v\right)^2 + \frac{\alpha}{R^2} \int (\varphi v)^2  \leq 8R^{-2} \epsilon_2
\end{equation}
where
$$\alpha = c \left( C_S^{-1} - A\sqrt{\eta} \right) - B R^2.$$
We assume that $\eta$ and $R_0$ are sufficiently small that $\alpha > c / 2  C_S .$  

Rewrite (\ref{spendingpropdiffineq}) as
$$\frac{d}{dt} \left( e^{\alpha t / R^2} \int \left(\varphi v\right)^2\right) \leq 8R^{-2} \epsilon_2 e^{\alpha t / R^2}$$
and integrate in time, to obtain
\begin{equation*}
\begin{split}
\int_{B_{R/2}} v(t)^2 & \leq e^{- \alpha t / R^2} \int_{B_R} v(0)^2 + C \epsilon_2 \left(1 - e^{- \alpha t / R^2} \right).
\end{split}
\end{equation*}
This establishes (\ref{spendingpropest}) for $0 \leq t < T_0.$

Assume, for the sake of contradiction, that $T_0 < T.$ For $t < T_0,$ (\ref{spendingpropest}) reads
\begin{equation*}
\begin{split}
\int_{B_{R/2}} |F(t)|^2 \leq \epsilon_1 + C \epsilon_2.
\end{split}
\end{equation*}
Provided that
$$2\left(1 + C \right) \epsilon_0 < \eta$$
we have
\begin{equation}\label{spendingproplastest}
\int_{B_R} |F(t)|^2 = \int_{B_{R/2}} |F(t)|^2 + \int_{U_{R/2}^{R}} |F(t)|^2 \leq \epsilon_1 + C \epsilon_2 \leq \frac{1}{2} \eta.
\end{equation}
Since the flow is smooth for $t < T,$ the bound (\ref{spendingproplastest}) persists at $t = T_0.$ This contradicts the maximality of $T_0$ subject to the open condition (\ref{spendingpropassn}); hence $T_0 = T,$ and (\ref{spendingpropest}) is proved.
%We have established (\ref{spendingpropest}) assuming that $\epsilon_i$ and $R$ are all sufficiently small; this implies the given statement. %***rev2
\end{proof}

\section{Split evolution of curvature}\label{splitevolutionsection}

In this section, we study the system of evolution equations comprised by $F$ and $D^*F$ under (YM).
Following a similar strategy to that of R\aa de \cite{radecay}, we will work in cylindrical coordinates. However, because the flow (YM) is not preserved by general conformal changes (unless the solution is static), we must continue to use the Euclidean metric. Hence, in the parabolic context, cylindrical coordinates are only a computational device.

%After separating variables and applying a Hodge decomposition, together with the Bianchi identities, the two free components of $D^*F$ turn out to obey differential inequalities, (\ref{g1evol}-\ref{g2evol}), which are of the same form as (\ref{f1evol}), obeyed by the spherical component of $F.$ 
Under smallness of the curvature in an annular region, we shall find that the full system (\ref{mainsystem}) is governed by the system of three differential inequalities (\ref{f1evol}-\ref{g2evol}).\footnote{It is noteworthy that the linear part of these inequalities is precisely the same as in the rotationally symmetric reduction of (YM) studied by Schlatter et.al. \cite{sstz}. See also the author's thesis \cite{thesis} for a detailed analysis.%***rev2
} These will yield the optimal decay exponents for both $F$ and $D^*F$ in Theorem \ref{decaythm}, crucial to the proof.

%***rev2

\subsection{Hodge Laplacian under a conformal change}

Let $\rho(x)$ be a smooth function on $M= M^n,$ with $n$ even. Set
$$\bar{g} = e^{2\rho} g, \quad \quad X = \bar{g}^{-1} d\rho.$$

\begin{lemma}\label{hodge} For $\omega \in \Omega^k,$ the Hodge Laplacians for $g$ and $\bar{g}$ are related by
\begin{equation*}
e^{-2\rho} \left( \Delta_g \omega - 2 \, d \rho \wedge d^* \omega \right) =\Delta_{\bar{g}} \omega + \left( n - 2k \right) d\iota_X \omega  + \left( n - 2(k+1) \right) \iota_X d \omega
\end{equation*}
where the adjoint $d^*$ is with respect to $g.$

Let $n=4.$ If $\alpha \in \Omega^2$ satisfies
$d \alpha = 0,$ then
\begin{equation}\tag{$a$}
e^{-2\rho} \Delta_g \alpha = \Delta_{\bar{g}} \alpha + 2 e^{-2\rho} d\rho \wedge d^*\alpha.
\end{equation}
If $\beta \in \Omega^1$ satisfies
$d^*\beta = 0,$ then
\begin{equation}\tag{$b$}
e^{-2\rho} \Delta_g \beta = \Delta_{\bar{g}} \beta + 2 \, d \left( \beta (X) \right).
\end{equation}
\end{lemma}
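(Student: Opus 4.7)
The plan is to reduce everything to the standard conformal transformation law for the codifferential, and then let algebra do the rest. Specifically, on an $n$-dimensional Riemannian manifold with $\bar g = e^{2\rho} g,$ the Hodge star on $k$-forms rescales as $\bar * = e^{(n-2k)\rho}*.$ For $n$ even, $d^* = -*d*,$ so $\bar d^* = -\bar * d \bar *;$ expanding this using $d(e^{(n-2k)\rho}) = (n-2k)e^{(n-2k)\rho} d\rho$ and the pointwise identity $*(d\rho \wedge *\omega) = \iota_{\nabla\rho}\omega$ (where $\nabla\rho = g^{-1} d\rho$) yields, after multiplying the exponential factors $e^{(2k-n-2)\rho}\cdot e^{(n-2k)\rho} = e^{-2\rho},$ the formula
\begin{equation*}
\bar d^* \omega = e^{-2\rho}\,d^*\omega - (n-2k)\,\iota_X \omega,
\end{equation*}
where $X = \bar g^{-1} d\rho = e^{-2\rho}\nabla\rho,$ so that $\iota_X = e^{-2\rho}\iota_{\nabla\rho}$ absorbs the remaining $e^{-2\rho}.$

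With this in hand, I would simply compute $\Delta_{\bar g}\omega = d\bar d^*\omega + \bar d^* d\omega.$ For the first term,
\begin{equation*}
d\bar d^* \omega = d\bigl(e^{-2\rho} d^*\omega\bigr) - (n-2k)\,d\iota_X \omega = e^{-2\rho}\bigl(dd^*\omega - 2\,d\rho \wedge d^*\omega\bigr) - (n-2k)\,d\iota_X\omega.
\end{equation*}
For the second, applying the transformation law with $k$ replaced by $k+1$ (since $d\omega$ is a $(k+1)$-form),
\begin{equation*}
\bar d^* d\omega = e^{-2\rho} d^*d\omega - \bigl(n-2(k+1)\bigr)\,\iota_X d\omega.
\end{equation*}
Summing and using $\Delta_g = dd^* + d^*d,$ then rearranging, produces exactly
\begin{equation*}
e^{-2\rho}\bigl(\Delta_g\omega - 2\,d\rho \wedge d^*\omega\bigr) = \Delta_{\bar g}\omega + (n-2k)\,d\iota_X\omega + \bigl(n-2(k+1)\bigr)\,\iota_X d\omega.
\end{equation*}

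The two specializations to $n=4$ then drop out immediately. For (a), $\omega = \alpha \in \Omega^2$ gives $n-2k = 0,$ so the $d\iota_X\alpha$ term vanishes, while the remaining $-2\iota_X d\alpha$ term is killed by the assumption $d\alpha = 0.$ For (b), $\omega = \beta \in \Omega^1$ gives $n-2(k+1) = 0,$ killing the $\iota_X d\beta$ term, while $d^*\beta=0$ eliminates the $d\rho \wedge d^*\beta$ term on the left; since $\iota_X \beta = \beta(X)$ for a 1-form, the stated form follows. The main obstacle—essentially the only substantive computation—is Step 1: getting the conformal transformation law for $d^*$ correct, since the various signs coming from $*^2 = (-1)^{k(n-k)},$ from $d^* = \pm *d*,$ and from the identity relating $*(\alpha \wedge *\omega)$ to $\iota_{\alpha^\sharp}\omega$ all conspire, and one must check that the $n$-parity and $k$-dependence are handled consistently. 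Once the codifferential formula is pinned down, the rest is a two-line manipulation.
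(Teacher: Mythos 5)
Your proof is correct and rests on exactly the same engine as the paper's: the conformal rescaling $\bar{\ast} = e^{(n-2k)\rho}\ast$ together with $\ast(d\rho\wedge\ast\,\cdot\,) = \iota_{g^{-1}d\rho}$, the only difference being that you factor the computation through the intermediate codifferential law $\bar d^*\omega = e^{-2\rho}d^*\omega - (n-2k)\iota_X\omega$ rather than expanding $\Delta_{\bar g} = -(\bar\ast d\bar\ast d + d\bar\ast d\bar\ast)$ in one go. This is a clean reorganization of the same argument, not a genuinely different route.
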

\begin{proof}
We write $\ast = \ast_g$ and
$$\bar{\ast} = \ast_{\bar{g}} = e^{(n-2k) \rho} \ast$$
on $k$-forms. Then
\begin{align}
\nonumber \Delta_{\bar{g}} \omega & = - \left( \bar{\ast} d \bar{\ast} d + d \bar{\ast} d \bar{\ast} \right) \omega \\
\nonumber & = -e^{(n-2(n-k))\rho} \ast d \left( e^{\left(n-2(k+1)\right) \rho} \ast d\omega \right) - d \left( e^{(n - 2(n-k+1)) \rho} \ast d \left( e^{(n-2k)\rho} \ast \omega \right) \right) \\
& = - e^{(-n+2k) \rho} \ast d \left( e^{(n-2(k+1)) \rho} \ast d \omega \right) - d \left( e^{(-n+2k-2)\rho} \ast d \left( e^{(n-2k)\rho} \ast \omega \right) \right) \\
\nonumber & = - e^{-2\rho} \ast d \ast d \omega - (n-2(k+1)) \ast \left(  e^{-2 \rho} d \rho \wedge \ast d\omega \right) \\
\nonumber & \quad \quad - d \left( (n-2k) \ast \left( e^{-2 \rho} d \rho \wedge \ast \omega \right) + e^{-2 \rho} \ast d \ast \omega \right).
\end{align}
Note that $X = e^{-2\rho} g^{-1} d\rho,$ and $\ast( d\rho \wedge \ast \alpha ) = \iota_{g^{-1} d\rho} \alpha.$ We therefore obtain
$$\Delta_{\bar{g}} \omega = e^{-2\rho} \Delta_g \omega - \left( n-2(k+1) \right) \iota_X d\omega - (n-2k) \, d \left( \iota_X \omega \right) + 2 e^{-2\rho} d\rho \wedge \ast d \ast \omega$$
as desired.
\end{proof}

\begin{rmk} The previous Lemma is equally valid if we replace $d$ by the covariant differential $D = D_A$ on forms valued in any bundle.
\end{rmk}
%\begin{equation}
%\begin{split}
%\Delta_{\bar{g}} \omega & = \rho^{-2} \Delta_g \omega + \frac{\left( n-2(k+1) \right)}{2} \ast \left( d \left( \rho^{-2} \right) \wedge \ast d \omega \right) \\
%& + \frac{\left( n - 2k \right)}{2} d \left( \ast \left( d\left( \rho^{-2} \right) \wedge \ast \omega \right) \right) \\
%& - d \left( \rho^{-2} \right) \wedge d^* \omega.
%\end{split}
%\end{equation}

\begin{lemma}\label{eigenvalues} Let $A_\theta$ be a connection on $E \to S^3,$ with $\|F_\theta\|^2_{L^2(S^3)} < \epsilon_0.$

\vspace{1mm}

\noindent $(a)$ For $\Omega \in \Omega^2(S^3, \gothg_E),$ assume that either $d_\theta \Omega = 0$ or $\Omega = d_\theta \omega$ for $\omega \in \Omega^1.$ Then
\begin{equation*}
\left( \Delta_\theta \Omega, \Omega \right) \geq \left(4 - C\|F_\theta\|_{L^2(S^3)} \right) \|\Omega\|^2_{L^2(S^3)}.
\end{equation*}

\vspace{1mm}

\noindent $(b)$ For $\omega \in \Omega^1(S^3, \gothg_E),$ there holds
\begin{equation*}
\left( \Delta_\theta \omega, \omega \right) \geq \left(3 - C\|F_\theta\|_{L^2(S^3)} \right) \| \omega \|^2_{L^2(S^3)}.
\end{equation*}

\vspace{1mm}

\noindent $(c)$ For $\phi \in \Omega^0(S^3, \gothg_E),$ assume that $ \phi = d_\theta^* \omega$ for $\omega \in \Omega^1(S^3, \gothg_E).$ Then
\begin{equation*}
\left( \Delta_\theta \phi, \phi\right) \geq \left(3 - C\|F_\theta\|_{L^2(S^3)} \right) \|\phi \|^2_{L^2(S^3)}.
\end{equation*}
\end{lemma}
\begin{proof} These are proven as in \cite{radecay}, Lemma 2.1. The proofs depend on the knowledge that the first eigenvalue of the Hodge Laplacian on real-valued closed 2-forms on $S^3$ is 4, while the first eigenvalue on 1-forms is 3.
%or ($b$), one may without loss choose $\beta \in \Omega^1$ to have $d_\theta \beta = 0;$ in this case
%$$\LB \Delta_\theta , d_\theta^* \RB \beta =  0 = - \left(d_\theta^*\right)^2 d_\theta \beta = 0.$$%\LB F \cdot d\beta \RB = 0.$$
%Hence the estimate on $a$ carries over from $\beta \in \Omega^1.$
\end{proof}

\subsection{Evolution equations in cylindrical coordinates} As stated in \S 2, in this section and for the remainder of the paper, we will assume for simplicity that the metric $g$ on $M$ is flat.

Denote the cylindrical metric
$$\bar{g} = r^{-2} g$$
on $B_1 \setminus \{0\} \simeq \R_{-} \times S^3,$ and introduce the cylindrical coordinate $s = \log r.$ In the notation of Lemma \ref{hodge}, we have
$$\rho = - s, \quad \quad X = - \frac{\partial}{\partial s}.$$
Write
\begin{equation*}
\begin{split}
\alpha & = F = ds \wedge \Phi + \Omega \\
\beta & = D^*F = \phi \, ds + \omega 
%& = r^{-2} \left(-  d_\theta^{\bar{*}}\Phi \, ds - \partial_s \Phi + d_\theta^{\bar{*}} \Omega \right)
\end{split}
\end{equation*}
where
\begin{equation}\label{betadef}
\begin{split}
\phi(s,t) \in \Omega^0(S^3, \gothg), \quad \quad \omega(s,t), \Phi(s,t) & \in \Omega^1(S^3, \gothg ), \quad \quad \Omega(s,t) \in \Omega^2(S^3, \gothg).
%b(s,t) & = - r^{-2} d_\theta^{\bar{*}}\Phi %\in \Omega^0(S^3, \gothg ) \quad \quad \omega(s,t) = r^{-2} \left( - \partial_s \Phi + d_\theta^{\bar{*}} \Omega \right). %\in \Omega^1(S^3, \gothg )
\end{split}
\end{equation}
The Bianchi identity $DF= D\alpha = 0$ is equivalent to
\begin{equation}\label{secondbianchi1}
d_\theta \Omega = 0
\end{equation}
\begin{equation}\label{secondbianchi2}
\partial_s \Omega  = d_\theta \Phi
\end{equation}
while the identity $D^*D^*F = D^* \beta = 0$ reads
\begin{equation}\label{firstbianchi}
\partial_s (r^2 \, \phi) = r^2 \, d_{\theta}^{\bar{*}} \omega.
\end{equation}
All derivatives are covariant with respect to $A(t),$ and $d_\theta^{\bar{*}}$ denotes the adjoint with respect to the round metric on $S^3.$

The Hodge Laplacian with respect to $\bar{g}$ is computed as follows.
\begin{equation*}
\begin{split}
D \left( ds \wedge \Phi \right) & = - ds \wedge d_\theta \Phi \\
D^{\bar{*}} D \left( ds \wedge \Phi \right) & = \partial_s d_\theta \Phi + ds \wedge d_\theta^{\bar{*}} d_\theta \Phi
\end{split}
\end{equation*}
and
\begin{equation*}
\begin{split}
D^{\bar{*}} \left( ds \wedge \Phi \right) & = - \partial_s \Phi - ds \wedge d_\theta^{\bar{*}} \Phi \\
D D^{\bar{*}} \left( ds \wedge \Phi \right) & = - ds \wedge \partial_s^2 \Phi - d_\theta \partial_s \Phi + ds \wedge d_\theta d_\theta^{\bar{*}} \Phi
\end{split}
\end{equation*}
hence
\begin{equation*}\label{cylinderhodge}
\begin{split}
\Delta_{\bar{g}} (ds \wedge \Phi) & = ds \wedge \left( - \partial_s^2 + \Delta_\theta \right) \Phi + \LB \partial_s , d_\theta \RB \Phi \\
& =  ds \wedge \left( - \partial_s^2 + \Delta_\theta \right) \Phi + \Phi \# \Phi.
\end{split}
\end{equation*}
Here $\#$ denotes a contraction involving the cylindrical metric, and similar expressions of the form $\Delta_{\bar{g}} = - \partial_s^2 + \Delta_\theta + \Phi \#$ hold for the other components.

%WATCH OUT the \Phi#\Phi isn't along ds!!!

According to Lemma \ref{hodge}, the pair of evolution equations
\begin{equation*}
\begin{split}
\left( \partial_t + \Delta_A \right) F_{ij} & = 0 \\
\left( \partial_t + \Delta_A\right) D^*F_i & = r^{-2} \, \bar{g}^{jk}\LB F_{ij}, D^*F_k \RB
\end{split}
\end{equation*}
is equivalent to a system
\begin{equation}\label{mainsystem}
\left\{
\begin{split} (i) \quad & r^2 \partial_t \Omega  = \left( \partial_s^2 - \Delta_{\theta} \right) \Omega + \alpha \# \alpha \\
(ii) \quad & r^2 \partial_t \Phi  = \left( \partial_s^2 - \Delta_{\theta} \right) \Phi + 2 r^2 \omega + \alpha \# \alpha \\
(iii) \quad & r^2 \partial_t \omega  = \left( \partial_s^2 - \Delta_{\theta} \right) \omega + 2 \, d_\theta \phi + \alpha \# \beta \\
(iv) \quad & r^2 \partial_t \phi  = \left( \partial_s^2 + 2 \partial_s - \Delta_{\theta} \right) \phi + \alpha \# \beta. \\
\end{split}
\right.
\end{equation}
Note the commutation formulae
\begin{align*}
%\begin{split}
d_\theta^2 & = \Omega \#, & \LB \partial_t, d_\theta \RB & = \omega \# \\
\LB d_\theta, \Delta_\theta \RB & = \nabla_\theta \Omega \# + \Omega \# \nabla_\theta, &
\LB \partial_s^2 , d_\theta \RB & = \partial_s \Phi \# + \Phi \# \partial_s.
%\end{split}
\end{align*}
%Applying $d_\theta$ to $(ii),$ we obtain
%\begin{equation}\tag{$ii'$}
%\begin{split}
%r^2 \partial_t d_\theta \Phi & = \left(\partial_s^2 - \Delta_\theta \right)d_\theta \Phi + 2r^2 d_\theta \omega + r^2 \omega \# \Phi + \partial_s \Phi \# \Phi + \nabla_\theta \alpha \# \Phi + \Omega \# \nabla_\theta \Phi \\
%& = \left( \partial_s^2 - \Delta_\theta \right) d_\theta \Phi + 2 r^2 d_\theta \omega + E_{(ii')}.
%\end{split}
%\end{equation}
Applying $d_\theta$ to $(iii),$ we obtain
\begin{equation}\tag{$iii'$}
\begin{split}
\quad r^2 \partial_t d_\theta \omega & = \left(\partial_s^2 - \Delta_\theta \right)d_\theta \omega + \Omega \# \phi + r^2 \omega \# \omega + \left( \partial_s \Phi + \nabla_\theta \alpha \right) \# \beta + \alpha \# \left( \partial_s \omega + \nabla_\theta \beta \right) \\
& = \left(\partial_s^2 - \Delta_\theta \right)d_\theta \omega + E_{(iii')}.
\end{split}
\end{equation}
 Also let
$$\xi(s,\theta,t) = r \cdot \phi(s,\theta,t) $$
to obtain
\begin{equation}\tag{$iv'$}
\begin{split}
r^2 \partial_t \xi & = \left( \partial_s^2 - \Delta_\theta - 1 \right) \xi + r \alpha \# \beta  \\
& = \left( \partial_s^2 - \Delta_\theta - 1 \right) \xi + E_{(iv')}.
\end{split}
\end{equation}

Writing $|\cdot | = |\cdot|_{\bar{g}}, \bar{\nabla} = \nabla_{\bar{g}},$ and $\alpha = \alpha(s,t),$ etc., we now let
\begin{samepage}
\begin{align}\label{componentsdefinition}
\nonumber e(s,t) & = \sup_{S^3} \left( \left| \alpha \right| + \left| \bar{\nabla}\alpha \right| + \left| \bar{\nabla}^{(2)}\alpha \right| \right), & h(s,t) & = \sup_{S^3} \left( \left| \beta \right| +  \left|  \bar{\nabla} \beta \right| \right) \\
\nonumber f^2(s,t) & = \int_{S^3} \left| \alpha \right|^2 d \Theta, & g^2(s,t) & = \int_{S^3} \left| \beta \right|^2 d \Theta \\
f_1^2(s,t) & = \int_{S^3} \left| \Omega \right|^2 d \Theta, & g_1^2(s,t) & = \int_{S^3} \left| d_\theta\omega \right|^2 d \Theta \\
\nonumber f_2^2(s,t) & = \int_{S^3} \left| \partial_s \Omega \right|^2 d \Theta, & g_2^2(s,t)  & = \int_{S^3} \left| \xi \right|^2 d \Theta \\
\nonumber &  & g_3^2(s,t) & = \int_{S^3} \left| \partial_s \xi \right|^2 d \Theta.
\end{align} %***rev2.
\end{samepage}
%We have clearly
%\begin{equation}
%\begin{split}
%f_1(s,t) & \leq f(s,t) \\
%f_2(s,t) & \leq C e(s,t) \\
%g_i(s,t) & \leq h(s,t) \quad \left( i = 1,2,3 \right).
%\end{split}
%\end{equation}
By removing $L^\infty$ norms and applying H\"older's inequality, we may estimate
\begin{align}\label{g1error}
\nonumber \int \LA d_\theta \omega , E_{(iii')} \RA & \leq C g_1 \left(eg + r^2 g h + hf \right) \\
& \leq C e g_1 h
\end{align}
where we have used $r^2 g \leq C e,$ from the definition. Also
\begin{equation}\label{g2error}
\int \LA \xi , E_{(iv')} \RA \leq C r e g g_2.
\end{equation}

Following R\aa de \cite{radecay}, we compute
\begin{equation*}
\begin{split}
2f_1 \partial_s f_1 = \partial_s f_1^2 & = 2  \int \LA \Omega , \partial_s \Omega \RA \leq 2 f_1 f_2 \\
\end{split}
\end{equation*}
and
\begin{equation}\label{spherical1}
\left( \partial_s f_1 \right)^2  \leq f_2^2.
\end{equation}
Also
\begin{equation}\label{spherical2}
2 \left( \left( \partial_s f_1\right)^2 + f_1 \partial_s^2 f_1 \right) = \partial_s^2 \left( f_1^2 \right) = 2 \left( f_2^2 + \int \LA \Omega, \partial_s^2 \Omega \RA \right).
\end{equation}
Subtracting (\ref{spherical2}) from (\ref{spherical1}) yields
\begin{equation}\label{spherical3}
f_1 \partial_s^2 f_1 \geq \int \LA \Omega, \partial_s^2 \Omega \RA.
\end{equation}
Note lastly that
\begin{equation}\label{spherical4}
2 f_1 \partial_t f_1 = \partial_t f_1^2 = 2 \int \LA \Omega, \partial_t \Omega \RA.
\end{equation}
Subtracting (\ref{spherical4}) from (\ref{spherical3}) yields
\begin{equation}\label{spherical5}
\begin{split}
f_1 \left( r^2 \partial_t - \partial_s^2 \right) f_1 \leq \int \LA \Omega , \left( r^2 \partial_t - \partial_s^2 \right) \Omega \RA
& = \int \LA \Omega , - \Delta_\theta \Omega + \alpha \# \alpha \RA \\
& \leq - \left( 4 - C f(s) \right) f_1^2 + e(s) ff_1 \\
& \leq \left(-4 + C e(s) \right) ff_1
\end{split}
\end{equation}
by (\ref{secondbianchi1}) and Lemma \ref{eigenvalues}$a$, provided
\begin{equation}\label{esmallness}
e^2(s,t) < \epsilon_0
\end{equation}
as we assume henceforth.

Define the heat operator
\begin{equation}\label{square}
\square = \partial_t - \left( \partial_r^2 + \frac{1}{r}\partial_r - \frac{4}{r^2} \right).
\end{equation}
Dividing by $r^2 f_1$ in (\ref{spherical5})
yields\footnote{This is valid in the sense of distributions, as can be shown by replacing $f_1^2$ by $f_1^2 + \epsilon$ and letting $\epsilon \to 0.$} 
\begin{equation*}%\label{f1evol}
\square f_1(r,t) \leq \frac{C e}{r^2} f.
\end{equation*}
The same calculation, using (\ref{g1error}-\ref{g2error}) as well as Lemma \ref{eigenvalues}$c$ together with the fact
\begin{equation}\label{chiisPhi}
\xi = r \phi = d_\theta^{\bar{*}} \left( - r^{-1} \Phi \right)
\end{equation}
yields similar evolution equations for $g_1$ and $g_2.$ Overall, having only assumed (\ref{esmallness}), we obtain the system of differential inequalities %***rev2
\begin{align}
%\square f_2 & \leq \frac{C e}{r^2} \left( f_2 + f \right) + 2 g_1 + e g \\
\label{f1evol} \square f_1 & \leq \frac{C e}{r^2} f \\
\label{g1evol} \square g_1 & \leq \frac{C e}{r^2} h \\
\label{g2evol} \square g_2 & \leq \frac{C e}{r} g.
\end{align}

\subsection{Size of curvature components}\label{sizeofcomponentssection} Finally, we record the relationship between the component functions and the size of the curvature in the Euclidean metric $g.$ Redefining $e$ and $h$ up to constants, we have %***rev2
\begin{align}\label{sizeofcomponents}
\nonumber e(r,t) & = r^2 \sup_{S^3} \left( \left| F \right|_g + r \left| \nabla F \right|_g + r^2 \left| \nabla^{(2)} F \right|_g \right) \\
h(r,t) & = r\sup_{S^3} \left( \left| D^*F \right|_g + r\left| \nabla D^*F \right|_g \right) \\
\nonumber f^2(r,t) & = r^4 \int_{S^3} \left| F \right|_g^2 \, d\Theta, \quad \quad \quad g^2(r,t) = r^2 \int_{S^3} \left| D^*F \right|_g^2 \, d\Theta.
\end{align}
Note from the definition that
$$r^2 h(r,t) \leq e(r,t).$$
If we assume that $(r,t)$ is such that
\begin{equation*}
\sup_{t-r^2 \leq \tau \leq t} \| F(\tau)\|^2_{L^2 \left( U^{2r}_{r/2} \right)} < \epsilon_0
\end{equation*}
then Lemma \ref{annularantibubble} implies
\begin{equation*}
f_1(r,t) + f_2(r,t) \leq C e(r,t) \leq C r^{-1} \| F\|_{L^2 \left(U^{2r}_{r/2} \times \LB t-r^2 , t \RB \right)}
\end{equation*}
\begin{equation*}
g_1 + r^{-1}\left( g_2 + g_3\right) \leq C h \leq C r^{-2} \| D^*F \|_{L^2 \left(U^{2r}_{r/2} \times \LB t-r^2 , t \RB \right)}.
\end{equation*}

On the other hand, by (\ref{secondbianchi2}), (\ref{chiisPhi}), and Lemma \ref{eigenvalues}$b,$ we have
\begin{equation*}
f \leq f_1 + C\left( f_2 + r g_2 \right).
\end{equation*}
Moreover, the identity (\ref{firstbianchi}) may be rewritten
$$d_\theta^{\bar{*}}\omega = r^{-2}\partial_s \left( r^{2} \phi \right) = r^{-1}\left( \partial_s + 1 \right)\xi$$
which implies
$$\int_{S^3} |d_\theta^{\bar{*}} \omega |^2 \leq C r^{-2} \left( g_2^2 + g_3^2 \right).$$
Again by Lemma \ref{eigenvalues}$b,$ this yields
\begin{equation}\label{gsize}
\begin{split}
g(r,t) & \leq C \left( g_1 + r^{-1}\left( g_2 + g_3 \right) \right).
\end{split}
\end{equation}

We adopt the convention
$$\int_\rho^R \, dV = \int_{U_\rho^R} \, dV.$$

\begin{prop}\label{sizeofcurvprop} Assume $e^2 < \epsilon_0,$ and let $0 < \alpha < 1.$ Then
\begin{equation}\tag{$a$}
e(r,t) \leq C_\alpha \sup_{U_{\alpha r}^{\alpha^{-1}r} \times \LB t - (1 - \alpha)r^2, t \RB }\left(  f_1 + r g_2 \right)
\end{equation}
%\begin{equation}\tag{$b$}
%h(r,t) \leq C \sup_{U(r/2,2r) \times \LB t - r^2, t \RB }\left(  g_1 + r^{-1} g_2 \right)
%\end{equation}
%\begin{equation}\tag{$b$}
%\begin{split}
%\int_{t_1}^{t_2} \!\!\!\! \int_{r/2}^r e^2 \, dV dt \leq C \int_{t_1 - r^2}^{t_2} \int_{r/4}^{2r} \left( %f_1^2 +  r^2 g_2^2 \right) \, dV dt
%\end{split}
%\end{equation}
\begin{equation}\tag{$b$}
\begin{split}
r^6 \sup_{ t_1 \leq t \leq t_2 } h(r,t)^2 + r^4 \int_{t_1}^{t_2} h(r,t)^2 \, dt \leq C_\alpha \int_{t_1 - r^2}^{t_2} \! \int_{\alpha r}^{\alpha^{-1}r} \left( g_1^2 +  r^{-2} g_2^2 \right) \, dV dt.
\end{split}
\end{equation}
\end{prop}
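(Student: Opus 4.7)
The approach is to derive both parts from $\epsilon$-regularity, translating the resulting pointwise $L^\infty$ bounds into the per-sphere Hodge components $(f_1, f_2, g_1, g_2, g_3)$ introduced earlier in this section. The hypothesis $e^2 < \epsilon_0$ supplies the $L^2$ smallness of $F$ on the annulus $U_{\alpha r}^{\alpha^{-1}r}$ needed to invoke Proposition \ref{epsilonreg} or its annular refinement Lemma \ref{annularantibubble}(b), applied on this region over a time window of length $\sim (1-\alpha)r^2$.

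For (a), I would apply Lemma \ref{annularantibubble}(b), obtaining after multiplication by $r^2$ the bound $e(r,t) \leq C_\alpha r^{-1}\|F\|_{L^2(U_{\alpha r}^{\alpha^{-1}r} \times [t-(1-\alpha)r^2, t])}$. Converting the spacetime $L^2$ norm via $\int|F|_g^2\,dV = \int f^2(r')/r'\,dr'$ and a logarithmic factor from the annular integration in $r'$ yields $e(r,t) \leq C_\alpha \sup f(r',t')$ over the enlarged neighborhood. Writing $\alpha = ds\wedge\Phi + \Omega$, the Bianchi identity $\partial_s\Omega = d_\theta\Phi$ from (\ref{secondbianchi}), the relation $d_\theta^{\bar{*}}\Phi = -r^2\phi$ equivalent to (\ref{chiisPhi}), and the eigenvalue estimate of Lemma \ref{eigenvalues} applied to the twisted Hodge Laplacian on 1-forms on $S^3$ together give $\|\Phi\|_{L^2(S^3)}^2 \leq C(f_2^2 + r^2 g_2^2)$, hence $f \leq C(f_1 + f_2 + rg_2)$. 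The residual $f_2$ satisfies $f_2(r,t) \leq Ce(r,t)$ pointwise, directly from the definition of $e$ (which controls $r|\nabla F|_g$ and hence $\partial_s\Omega$ after the $\bar g \leftrightarrow g$ metric conversion); applying the estimate iteratively on a nested sequence of annuli with parameters $\alpha_k \searrow \alpha$ allows the $f_2$-contribution to be absorbed at the cost of an $\alpha$-dependent constant, producing the stated inequality.

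For (b), Proposition \ref{epsilonreg}(b) applied on Euclidean balls of radius $\sim r$ centered at points of $S^3_r$ gives $r^4 h(r,t)^2 \leq C\|D^*F\|^2_{L^2(B \times [t-r^2, t])}$. Integrating in $t$ over $[t_1, t_2]$ and applying Fubini to absorb the double $t,t'$ integration into a factor of $r^2$, then converting the spatial integral via $\int|D^*F|^2\,dV = \int r'(\|\phi\|_{L^2(S^3)}^2 + \|\omega\|_{L^2(S^3)}^2)\,dr'$ with $\|\phi\|^2 = g_2^2/r'^2$, and using (\ref{firstbianchi}) together with the Hodge decomposition on $S^3$ and Lemma \ref{eigenvalues} to obtain $\|\omega\|^2_{L^2(S^3)} \leq C(g_1^2 + r^{-2}(g_2^2 + g_3^2))$, produces
\[
r^4\int_{t_1}^{t_2}\! h(r,t)^2\,dt \leq C_\alpha \!\int_{t_1-r^2}^{t_2}\!\!\int_{\alpha r}^{\alpha^{-1}r}\!\bigl(g_1^2 + r^{-2}g_2^2 + r^{-2}g_3^2\bigr)\,dV\,dt.
\]

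The main obstacle is the spurious $g_3^2 = \|\partial_s \xi\|_{L^2(S^3)}^2$ contribution, which I would eliminate via a Caccioppoli-type parabolic estimate for $\xi$ derived from equation $(iv')$ of (\ref{mainsystem}). Testing $(iv')$ against $\xi$ with a cutoff $\varphi$ for a slightly enlarged annulus, integrating by parts in $s$ and on $S^3$, and applying Lemma \ref{eigenvalues}(b) to handle the $\Delta_\theta$ term yields $\int\int r^{-2}g_3^2\,dV\,dt \leq C_\alpha\int\int r^{-2}g_2^2\,dV\,dt + \int\int \bigl|E_{(iv')}\bigr||\xi|\,dV\,dt$ on the enlarged annulus. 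The error $E_{(iv')} = r\,\alpha\#\beta$ is pointwise bounded by $e\cdot g$, so under the smallness of $e$ the last term is absorbable. Substituting the resulting bound on $\int r^{-2}g_3^2\,dV\,dt$ into the preceding display yields the claimed integrated inequality; the $r^6\sup_t h^2$ component follows by taking the time-supremum in the pointwise estimate $r^4 h(r,t)^2 \leq C\|D^*F\|^2_{L^2(B\times[t-r^2, t])}$ prior to the radial integration and applying the same Hodge-theoretic decomposition and $g_3$-elimination.
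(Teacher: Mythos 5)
Your treatment of part $(b)$ tracks the paper's argument faithfully: a Caccioppoli estimate for $\xi$ obtained by testing $(iv')$ against $\psi^2\xi$, absorption of the quadratic error $E_{(iv')}$ using $e^2 < \epsilon_0$, the Hodge-theoretic bound (\ref{gsize}) to express $g$ in terms of $g_1, g_2, g_3$, $\epsilon$-regularity for the sup bound, and a Fubini argument for the spacetime integral. That is exactly what the paper does.

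However, part $(a)$ has a genuine gap in your handling of the residual $f_2$. You correctly arrive at $f \leq f_1 + C(f_2 + r g_2)$ via the Bianchi identity, (\ref{chiisPhi}), and the eigenvalue estimate of Lemma \ref{eigenvalues}, and you correctly note $f_2 \leq Ce$. But chaining these with the $\epsilon$-regularity bound $e(r,t) \leq C \sup f$ gives an inequality of the schematic form $e \leq C \sup (f_1 + r g_2) + C' \sup e$ over a slightly enlarged neighborhood, where $C'$ is \emph{not} small. Iterating over a nested family of annuli with $\alpha_k \searrow \alpha$ does not let you absorb this term: the multiplicative factor $C' \geq 1$ accumulates rather than contracts, so the iteration diverges. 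There is no smallness in $f_2 \leq Ce$ to exploit. The correct mechanism---which is what the paper means by ``$(a)$ is similar''---is the same Caccioppoli argument you use for $(b)$, but applied to equation $(i)$ of (\ref{mainsystem}): test $r^2 \partial_t \Omega = (\partial_s^2 - \Delta_\theta)\Omega + \alpha \# \alpha$ against $\psi^2 \Omega$, integrate by parts, and absorb $|\partial_s\Omega|$ via Young's inequality to obtain a bound on $\int \psi^2 f_2^2$ in terms of $\int f_1^2$ plus a nonlinear error $\int \psi^2 e f f_1 / r^2$, which \emph{is} absorbable once $e^2 < \epsilon_0$ is small. This eliminates $f_2$ at the $L^2$ level (in spacetime), after which $\epsilon$-regularity converts the conclusion back to the pointwise bound $(a)$. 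As written, your argument for $(a)$ does not close.
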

\begin{proof} We prove only ($b$), since ($a$) is similar. Let $\alpha = 1/e$ for simplicity.

Choose a smooth cutoff $\psi_0(s,t) \geq 0$ with
$$\psi_0(s,t) \equiv 1 \qquad \left( -1/2 \leq s \leq 1/2, \quad t \geq 0 \right) $$
$$\text{supp} \, \psi_0 \subset \left( -1, 1 \right) \times \left(-1 , \infty \right)$$
$$\left| \partial_s \psi_0 \right| + \left| \partial_t \psi_0 \right| \leq 4$$
and put
$$\psi(s, t) = \psi_{\sigma , \tau} (s,t) = \psi_0 \left(s - \sigma , e^{-2\sigma} \left( t - \tau \right) \right).$$

Let $\sigma = \log r,$ take an inner product with $\psi^2 \xi$ in ($iv'$), and integrate in space and time, to obtain
\begin{equation*}
\begin{split}
\frac{1}{2} \int & \psi(\cdot, \tau)^2 \left| \xi(\cdot, \tau ) \right|^2 r^2 \, d\bar{V}  + \int_{-\infty}^{\tau} \! \int \psi^2 \left( |\xi|^2 + \left| \partial_s \xi \right|^2 + \left| d_\theta \xi \right|^2 \right) \, d\bar{V} dt \\
& \leq \int_{-\infty}^{\tau} \! \int \left( r^2 \psi \partial_t \psi | \xi |^2 + 2 \psi \left| \partial_s \psi \right| |\xi | \left| \partial_s \xi \right| + \psi^2 \LA \xi , E_{(iv')} \RA \right) \, d\bar{V} dt
\end{split}
\end{equation*}
Applying Young's inequality and absorbing $\psi | \partial_s \xi |,$ while noting that $r^2 \partial_t \psi \leq C,$ we obtain
\begin{equation*}
\int_{-\infty}^{\tau} \! \int \psi^2 \left( | \xi |^2 + \left| \partial_s \xi \right|^2 + \left| d_\theta \xi \right|^2 \right) \, d\bar{V} dt \leq C \int_{\tau - r^2}^{\tau} \! \int_{\sigma - 1}^{\sigma + 1} \left( |\xi|^2 + \psi^2 \LA \xi , E_{(iv')} \RA \right) d \bar{V} dt
\end{equation*}
and from (\ref{g2error})
\begin{equation}\label{g3est}
\int_{\tau - r^2}^{\tau} \! \int \psi^2 g_3^2 \, ds dt \leq C \int_{\tau-r^2}^{\tau} \! \int_{\sigma - 1}^{\sigma + 1} g_2 \left( g_2 + \psi^2 reg \right) \, ds dt .
\end{equation}

Integrating (\ref{gsize}) and inserting (\ref{g3est}) yields
\begin{equation*}
\begin{split}
\int_{\tau - r^2}^\tau \! \int \psi^2 g^2 \, d\bar{V} dt & \leq \int_{\tau - r^2}^\tau \! \int \psi^2 \left( g_1^2 + r^{-2} \left( g_2^2 + g_3^2 \right) \right) \, d\bar{V} dt \\
& \leq C \int_{\tau - r^2}^\tau \! \int_{\sigma - 1}^{\sigma + 1} \left( g_1^2 + r^{-2} g_2 \left( g_2 + \psi^2 r e g \right) \right) \, d\bar{V} dt \\
%& \leq C \int_{\tau - R^2}^\tau \int_{R/4}^{2R} \left( g_1^2 + r^{-2} g_2^2 + \psi^2 r^{-1} e g g_2 \right) \, d\bar{V} dt \\
\end{split}
\end{equation*}
Now apply Young's inequality
$$r^{-1} e g g_2 \leq r^{-2} g_2^2 + e^2 g^2 $$
and rearrange, to obtain
\begin{equation*}
\begin{split}
\int_{\tau - r^2}^\tau \! \int \psi^2 \left( 1 - C e^2 \right) g^2 \, d\bar{V} dt
& \leq C \int_{\tau - r^2}^\tau \! \int_{\sigma - 1}^{\sigma + 1} \left( g_1^2 + r^{-2} g_2^2 \right) \, d\bar{V} dt.
\end{split}
\end{equation*}
For $Ce^2 < C\epsilon_0 < 1/2,$ in view of Proposition \ref{epsilonreg}$b$, we have
\begin{equation}\label{priortofubini}
r^6 h^2(r,\tau) \leq C \int_{\tau - r^2}^{\tau} \! \int_{\alpha^{1/2}r}^{\alpha^{-1/2} r} g^2 \, dV dt \leq C \int_{\tau - r^2}^{\tau} \! \int_{\alpha r}^{\alpha^{-1} r} \left( g_1^2 +  r^{-2} g_2^2 \right) \, dV dt.
\end{equation}
This bounds the first term of ($b$). To bound the second term, we integrate (\ref{priortofubini}) and use Fubini's Theorem:
\begin{align}\label{fubini}
\nonumber r^6 \int_{t_1}^{t_2} h(r,\tau)^2 \, d\tau & \leq C \int_{t_1}^{t_2} \!\!\!\! \int_{\tau - r^2}^{\tau} \! \int_{\alpha r}^{ \alpha^{-1} r} \left( g_1^2 +  r^{-2} g_2^2 \right) \, dV dt d\tau \\
& \leq C \int_{t_1 - r^2}^{t_2} \! \int_{t}^{t + r^2}\, d\tau \int_{\alpha r}^{\alpha^{-1} r} \left( g_1^2 +  r^{-2} g_2^2 \right) \, dV dt \\
\nonumber & \leq C r^2 \int_{t_1 - r^2}^{t_2} \int_{\alpha r}^{\alpha^{-1} r} \left( g_1^2 +  r^{-2} g_2^2 \right) \, dV dt.
\end{align} %***rev2
This completes the proof of ($b$).
\end{proof}

\section{Decay estimates}\label{decaysection}

\begin{thm}\label{decaythm} Let $A(t)$ be a smooth solution of the Yang-Mills flow on $U_\rho^1 \times \LB 0, T \right).$ Choose
\begin{align*}
\nonumber & 0 \leq  \epsilon_1, \epsilon_2  \leq \epsilon < \epsilon_0, \quad \quad 0 \leq \delta_i \quad (i = 1,\ldots, 4) \\
\nonumber & 0 \leq \alpha_1, \alpha_2 \leq \alpha < 1/2, \qquad 0 < \beta_1 < 1 \\
\nonumber & 0 \leq \beta_2 < 2 - \alpha_2,  \qquad \beta_2 \neq 1 %In the end we put < 1/2 so can do \beta = 1 + \alpha, in last step proving (1').
\end{align*}
such that
\begin{align}%\label{epsilondeltaassumptions}
\label{epsilon12assn} & \epsilon_1 \rho^{3} \leq \epsilon_2 \leq \epsilon_1 \rho^{-3}, \quad \quad \sqrt{\rho} \delta_4 \leq \delta_3 \leq \delta_4 \\
\label{delta34assn} & \delta_3^2 \leq \min \left[ \delta_1^2, \epsilon_1 \right] , \quad \quad \delta_4^2 \leq \min \left[ \delta_2^2, \epsilon_2 \right].
%\epsilon \epsilon_1 & \leq \epsilon_2 \leq \epsilon_1 / \epsilon \quad \quad \sqrt{\epsilon} \delta_1 \leq \delta_2 \leq \delta_1/\sqrt{\epsilon} \\ 
%\rho \delta_3 & \leq \delta_4 \leq \rho^{-1} \delta_3 \\
\end{align}

In the notation of (\ref{sizeofcomponents}), assume that for $\rho \leq r \leq 1,$ there hold:
\begin{samepage}
\begin{align*}%\label{eandhbounds}
& (1) \quad \sup_{0 \leq t < T} e(r,t) \leq \sqrt{\epsilon_1} \left( \frac{\rho}{r} \right)^{2 - \alpha_1} +  \sqrt{\epsilon_2} r^{2 - \alpha_2} \\
& (2) \quad \sup_{0 \leq t < T} r^2 h(r,t) \leq \delta_1 
\left( \frac{\rho}{r} \right)^{1 - \beta_1 } + \delta_2 r^{3 - \beta_2 } \\
%& (2a) \quad r^2 g_1(r,t) \leq \delta_1 \left( \frac{\rho}{r} \right)^{1 - \beta_1 } + \delta_2 r^{4 - \beta^a_2 } \quad \quad & (2b) \quad r g_2(r,t) \leq \delta_1 \left( \frac{\rho}{r} \right)^{2 - \beta^b_1 } + \delta_2 r^{3 - \beta_2 } \\
& (3) \quad \sup_{\rho \leq r \leq 2\rho} \int_{0}^{T} \!\! h(r,t)^2 \, dt \leq \delta_3^2 \rho^{-2}, \quad \quad \sup_{1/2 \leq r \leq 1}\int_{0}^{T} \!\! h(r,t)^2 \, dt \leq \delta_4^2.
\end{align*}
\end{samepage}
Let $\beta_+ = \left( \beta_2 - 1 \right)_+,$ and
$$w^a(r,t) = \left( \frac{r^2}{r^2 + t} \right)^{a/2}.$$
Then for all
\begin{equation}\label{rtassumptions}
\begin{split}
\rho \leq r_1 \leq r \leq r_2 \leq 1, \quad \quad 0 \leq t_1 \leq t < t_2 \leq T
\end{split}
\end{equation}
and an appropriate $C_1$ depending on $\alpha_i, \beta_i,$ there hold:
\begin{equation}\tag{$1'$}
e(r,t) \leq C_1 \left( \begin{split} \sqrt{\epsilon_1} \left( \frac{\rho}{r} \right)^{2 - \alpha_1} \left( w^{3} (r,t) + \left(\frac{\rho}{r} \right)^{\alpha_1} \right) \quad \quad \quad \\
+ \,\, \sqrt{\epsilon_2} r^{2 - \alpha_2} \left(w^{\alpha_2}(r,t) w^{3 - \alpha_2} (1, t) + r^{\alpha_2}\right)
\end{split}
 \right)
\end{equation}
\begin{equation}\tag{$2'$}
r^2 h(r,t) \leq  C_1 \! \left( \begin{split} \left( \frac{\rho}{r} \right)^{1 - \beta_1 } \!\! \left( \delta_1 w^3(r,t) + \delta_3 \left( \frac{\rho}{r} \right)^{\beta_1} \right) \quad \quad \quad \\
+ \,\, r^{3 - \beta_2} \left(\delta_2 w^{\beta_2}(r,t) w^{3 - \beta_2}(1,t) + \delta_4 r^{\beta_2} \right)
\end{split} \right)
\end{equation}
\begin{equation}\tag{$3'$}
\begin{split}
& \int_{t_1}^{t_2} \!\!\!\! \int_{r_1}^{r_2} \!\! h^2 \, dV dt \leq C_1 \rho^2 \left(\delta_1^2
\left(\frac{r_2}{\rho} \right)^{2\beta_1} \frac{t_2 - t_1}{t_2 + r_2^2} w^{4}(r_2, t_1) + \delta_3^2 \log\left(1 +  \frac{r_2}{r_1} \right) \right) \\
& \quad \quad \quad \quad \quad \quad \quad + C_1 r_2^{6 - 2 \beta_+} \left(\delta_2^2 w^{2 \beta_+}(r_2,t_1) w^{4 - 2 \beta_2} (1, t_1) + \delta_4^2 r_2^{2 \beta_+} \right).
 \end{split}
\end{equation}
\end{thm}

\begin{proof} Let $e, f, f_1,$ etc., be as in \S \ref{splitevolutionsection}. From (\ref{f1evol}) and (1), we have
\begin{equation}\label{etau}
\begin{split}
\square f_1(r,t) & \leq \frac{C}{r^2} \left( \sqrt{\epsilon_1} \left( \dfrac{\rho}{r} \right)^{2-\alpha_1} + \sqrt{\epsilon_2} r^{2 - \alpha_2} \right)^2 \\
& \leq \frac{C}{r^2} \left( \epsilon_1 \left( \dfrac{\rho}{r} \right)^{4 - 2\alpha_1} + \epsilon_2 r^{4 - 2\alpha_2} \right) =: \eta.
\end{split}
\end{equation}
Per the Appendix, we construct a comparison function
\begin{equation}\label{ucomparison}
\begin{split}
u & = u_\varphi + u_\psi + u_\xi + u_\eta \\
\end{split}
\end{equation}
with
\begin{equation}\label{ucomparison2}
\square u_\varphi = \square u_\psi = \square u_\xi = 0, \quad \quad \square u_\eta = \eta
\end{equation}
and where $u_\varphi, u_\psi, u_\xi$ agree with $f_1$ at $t = 0, r = \rho,$ and $r = 1,$ respectively, and $u_\eta$ vanishes on the parabolic boundary.
By the comparison principle, we have
\begin{equation}\label{decaythm1}
f_1 \leq u.
\end{equation}

%One first checks directly that the homogeneous terms $u_0, v_0, \tilde{v}_0$ satisfy the relevant items of ($1'-4'$).
By Proposition \ref{trivialinitialprop}, we have
\begin{equation}\label{decaythm2}
|u_\varphi (r,t)| \leq C \left( \sqrt{\epsilon_1} \rho^{2-\alpha_1} r^{\alpha_1 - 2} w^{4 - \alpha_1} (r,t) w^{2 + \alpha_1}(1,t) + \sqrt{\epsilon_2} r^{2 - \alpha_2} w^{\alpha_2}(r,t) w^{6 - \alpha_2}(1, t) \right).
\end{equation}
%By Proposition \ref{trivialinitialprop}$b,$ we have
%\begin{equation}\label{decaythm2}
%\begin{split}
%& \int_{t_1}^{t_2} \!\!\!\! \int_{r}^{2r} u_\varphi^2 \, dV dt \leq C r^4 \cdot \\
%& \epsilon_1 \rho^{4 - 2\alpha_1} r^{2\alpha_1 - 2} \frac{t_2 - t_1}{t_2 + r^2} w^{6-2\alpha_1}(r, t_1) w^{4 + 2 \alpha_1}(R, t_1) + \epsilon_2 r^4 (t_2 - t_1)^{1 - \alpha_2} w^{12 - 2 \alpha_2}(R, t_1) \\
%& \quad \quad \quad \leq C r^4 \left( \epsilon_1 r^2 \left( \frac{\rho}{r} \right)^{4 - 2 \alpha_1}  w^{6-2\alpha_1}(r, t_1) + \epsilon_2 r^4 (t_2 - t_1)^{1 - \alpha_2} w^{12 - 2 \alpha_2}(R, t_1) \right).
%\end{split}
%\end{equation}
Proposition \ref{insideprop}$a$ implies
\begin{equation}\label{decaythm3}
\begin{split}
| u_\psi (r,t) | & \leq C \left( \sqrt{\epsilon_1} + \sqrt{\epsilon_2} \rho^{2 - \alpha_2} \right) \left( \frac{\rho}{r} \right)^2 \\
& \leq C \sqrt{\epsilon_1} \left( \frac{\rho}{r} \right)^2
\end{split}
\end{equation}
where we have used (\ref{epsilon12assn}). Proposition \ref{outsideprop}$a$ implies
\begin{equation}\label{decaythm4}
\begin{split}
| u_\xi (r,t) | & \leq C \left( \sqrt{\epsilon_1} \rho^{2 - \alpha_1} + \sqrt{\epsilon_2} \right) r^2 \\
& \leq C \sqrt{\epsilon_2} r^2
\end{split}
\end{equation}
%In view of (\ref{decaythm1}-\ref{decaythm4}), we have shown that $u_0= u_\varphi + u_\psi + u_\xi$ obeys ($1a'$) and ($3a'$).
 %7/19/16.
where we have again used (\ref{epsilon12assn}). Since $4 - 2\alpha_i > 2,$ we may apply (\ref{etau}) and Proposition \ref{inhomogeneoushardprop}, to obtain
\begin{equation}\label{decaythm5}
|u_\eta (r,t) | \leq C \left( \epsilon_1 \left(\frac{\rho}{r} \right)^2 + \epsilon_2 r^2 \right) \leq C \sqrt{\epsilon} \left( \sqrt{\epsilon_1} \left(\frac{\rho}{r} \right)^2 +\sqrt{\epsilon_2} r^2 \right).
\end{equation}
In view of (\ref{decaythm1}-\ref{decaythm5}), we have shown that $f_1$ obeys ($1'$).

To prove ($2'$) and ($3'$), we use induction on time (the continuity method). Let
\begin{align*}
w_0(r,t) & = \frac{1}{r^2} \left( \delta_1 \left( \frac{\rho}{r} \right)^{1 - \beta_1 } w^3(r,t) + \delta_2 r^{3 - \beta_2} w^{\beta_2}(r,t) w^{3 - \beta_2}(1,t) \right).
\end{align*}
For a given $0 \leq \tau \leq T,$ we make the hypothesis
\begin{equation}\label{bootstraphypothesis}
\int_{0}^{\tau} \!\!\!\! \int_{r}^{2r} \left( h - C_1 w_0 \right)_+^2 \, dV dt \leq C_1 \left( \delta_3^2 \rho^2 + \delta_4^2 r^6 \right) \quad \left( \forall \,\, \rho \leq r \leq 1/2 \right).
\end{equation}
Note from (2) that
\begin{equation}\label{w010bound}
h(r,t) \leq 3 w_0(r,t) \qquad \left( 0 \leq t \leq r^2 \right).
\end{equation}
Hence for $C_1 \geq 3$ and $\tau \leq r^2,$ (\ref{bootstraphypothesis}) is trivially satisfied. Also note that for $r = \rho$ and $r = 1/2,$ (\ref{bootstraphypothesis}) follows automatically from (3).

We claim that for $\epsilon < \epsilon_0$ and an appropriate choice of $C_1,$ %both defined below (\ref{bootstrapsmallness}),
(\ref{bootstraphypothesis}) is satisfied for $\tau = T.$
Since $h(r,t)$ is continuous, it suffices to establish the implication
\begin{equation}\label{bootstrapimplication}
(\ref{bootstraphypothesis}) \text{ holds for } C_1 \quad \Rightarrow \quad (\ref{bootstraphypothesis}) \text{ holds for } C_1/2
\end{equation}
for arbitrary $\tau < T.$ For, the set of $\tau$ satisfying (\ref{bootstraphypothesis}) must then be a nonempty subinterval of $\LB 0 , T \right)$ which is both open and closed, hence equal to the entire interval.

To prove (\ref{bootstrapimplication}), fix $0 \leq \tau <T$ and assume (\ref{bootstraphypothesis}).
Write
$$h_0 = \min \LB h, C_1 w_0 \RB, \quad \quad \quad h_1 = \left( h - C_1 w_0 \right)_+$$
$$h = h_0 + h_1.$$
According to (\ref{g1evol}-\ref{g2evol}) and ($1$-$2$), we have
\begin{equation}\label{etadefs}
\begin{split}
\square g_1(r,t) & \leq C \sqrt{\epsilon}  \left( \left( \dfrac{\rho}{r} \right)^{2-\alpha} + r^{2-\alpha} \right) \frac{ h_0 + h_1}{r^2} = : \eta_0 + \eta_1 \\
\square g_2(r,t) & \leq C \sqrt{\epsilon} \left( \left( \frac{\rho}{r}\right)^{2-\alpha} + r^{2-\alpha} \right) \frac{h_0 + h_1}{r} = r \left( \eta_0 + \eta_1 \right).
\end{split}
\end{equation}
As in (\ref{ucomparison}-\ref{ucomparison2}), we construct comparison functions
$$v = v_\varphi + v_\psi + v_\xi + v_0 + v_1, \quad \quad \quad \tilde{v} = \tilde{v}_\varphi + \tilde{v}_\psi + \tilde{v}_\xi + \tilde{v}_0 + \tilde{v}_1$$
for $g_1$ and $g_2,$ respectively, where
\begin{align*}
\square v_0 & = \eta_0, \quad  \quad \square v_1 = \eta_1 \\
\square \tilde{v}_0 & = r\eta_0, \quad \quad \square \tilde{v}_1 = r \eta_1.
\end{align*}
By the comparison principle, we have
\begin{equation}\label{vcomparison}
g_1 \leq v, \quad \quad g_2 \leq \tilde{v}
\end{equation}
on $U^4(\rho, 1) \times \LB 0, \tau \RB.$

By Propositions \ref{trivialinitialprop}, \ref{insideprop}$b,d,$ and \ref{outsideprop}, we have
\begin{align}
\label{vinitialest} r^2 v_\varphi(r,t) & \leq C \left( \delta_1 \left( \frac{\rho}{r} \right)^{1 - \beta_1 }w^{5 - \beta_1}(r,t)w^{1 + \beta_1}(1,t) + \delta_2 r^{3 - \beta_2} w^{1 + \beta_2}(r,t) w^{5 - \beta_2}(1,t) \right) \\
\label{vsidest} r^6 v_\psi^2(r,t) & + \int_{0}^{\tau} \!\!\!\! \int_{r}^{2r} v_\psi^2 \, dV dt \leq C \delta_3^2 \rho^2, \quad \quad \quad r^4 v_\xi^2(r,t) + \int_{0}^{\tau} \!\!\!\! \int_{r}^{2r} v_\xi^2 \, dV dt \leq C \delta_4^2 r^8 \\
\label{vtildeinitialest} r \tilde{v}_\varphi(r,t) & \leq C \left( \delta_1 \left( \frac{\rho}{r} \right)^{1 - \beta_1 } w^{4 - \beta_1}(r,t) w^{2 + \beta_1}(1,t) + \delta_2 r^{3 - \beta_2} w^{\beta_2}(r,t) w^{6 - \beta_2}(1,t) \right) \\
\label{vtildesidest} r^4 \tilde{v}_\psi^2(r,t) & + \int_{0}^{\tau} \!\!\!\! \int_{r}^{2r} \tilde{v}_\psi^2 \, r dr dt \leq C \delta_3^2 \frac{\rho^4}{r^2}, \quad \quad \quad r^2 \tilde{v}_\xi^2(r,t) + \int_{0}^{\tau} \!\!\!\! \int_{r}^{2r} \tilde{v}_\xi^2 \, r dr dt \leq C \delta_4^2 r^6.
\end{align}

According to (\ref{etadefs}), $\eta_0$ satisfies
\begin{align*}
r^2 \eta_0(r,t) & \leq C C_1 \sqrt{\epsilon} w_0(r,t) \leq C C_1 \sqrt{\epsilon}
\left( \delta_1 \frac{\rho^{1-\beta_1}}{r^{3 - \beta_1}} w^3(r,t) + \delta_2 r^{1 - \beta_2} w^{\beta_2}(r,t) w^{3 - \beta_2}(1,t)
  \right).
\end{align*}
Applying Proposition \ref{inhomogeneoushardprop}, we immediately have
\begin{equation}\label{v0desiredest}
|v_0(r,t)| \leq C C_1 \sqrt{\epsilon} w_0(r,t).
\end{equation}

Next, to estimate $\tilde{v}_0,$ note that
\begin{align}\label{eta0toreturnto}
\nonumber r^2 \left( r \eta_0(r,t) \right) & \leq C C_1 \sqrt{\epsilon} \left(\left( \frac{\rho}{r} \right)^{2 - \alpha_1} + r^{2-\alpha_2} \right) r w_0(r,t) \\
& \leq C C_1 \sqrt{\epsilon} 
\left(
\delta_1 \frac{\rho^{1 - \beta_1}}{r^{2 - \beta_1}} w^3(r,t) + \delta_2 \left( \rho^{2 - \alpha_1} r^{\alpha_1 - \beta_2} + r^{4 - \alpha_2 - \beta_2} \right) w^{\beta_2}(r,t) w^{3 - \beta_2}(1,t)
  \right).
\end{align}
Provided that $\beta_1 < 1,$ and $\alpha_2 + \beta_2 < 2,$ Proposition \ref{inhomogeneoushardprop} implies
\begin{align}\label{v0tildedesiredest}
\nonumber | \tilde{v}_0(r,t) | & \leq C C_1 \sqrt{\epsilon} \left(
\delta_1 \frac{\rho^{1 - \beta_1}}{r^{2 - \beta_1}} w^3(r,t) + \delta_2 \left(\rho^{2 - \alpha_1} r^{\alpha_1 - \beta_2} w^{\beta_2}(r,t) w^{3 - \beta_2}(1,t) + r^2 w^3(1,t) \right)
  \right) \\
\nonumber & \leq C C_1 \sqrt{\epsilon} \left(
\delta_1 \frac{\rho^{1 - \beta_1}}{r^{2 - \beta_1}} w^3(r,t) + \delta_2 r^{2 - \beta_2} w^{\beta_2}(r,t) w^{3 - \beta_2}(1,t) \right) \\
& \leq C C_1 \sqrt{\epsilon} r w_0(r,t).
\end{align}

By (\ref{bootstraphypothesis}), $\eta_1$ satisfies
\begin{align}\label{v1givenest}
\nonumber \int_0^\tau \!\!\!\! \int_{r}^{2r} r^4 \eta_1^2 \, dV dt & \leq C C_1 \epsilon \left(\left( \frac{\rho}{r} \right)^{4 - 2\alpha_1} + r^{4 - 2\alpha_2} \right) \left( \delta_3^2 \rho^2 + \delta_4^2 r^6 \right) \\
& \leq C C_1 \epsilon \left( \delta_3^2 \left( \rho^{6 - 2\alpha_1} r^{-4 + 2 \alpha_1} + \rho^2 r^{4 - 2 \alpha_2} \right) + \delta_4^2 \left( \rho^{4 - 2\alpha_1} r^{2 + 2\alpha_1} + r^{10 - 2\alpha_2} \right) \right).
\end{align}
By Proposition \ref{inhomogeneousprop}$a,c,$ we have
\begin{equation}\label{v1firstest}
r^6 v_1^2(r,t) + \int_0^\tau \!\!\!\! \int_{r}^{2r} v_1^2 \, dV dt \leq C C_1 \epsilon \left( \delta_3^2 \left( \rho^2 + \rho^2 r^2 \right) + \delta_4^2 \left( \rho^{4 - 2\alpha_1}r^2 + r^{8} \right) \right).
\end{equation}
Provided that $\alpha_1 \leq 1/2$ and $\delta_4 \rho \leq \delta_3,$ we may rewrite the third term
$$\delta_4^2 \rho^{3} r^2 \leq \delta_4^2 \left( \rho^4 + r^8 \right) \leq \delta_3^2 \rho^2 + \delta_4^2 r^8.$$
Then (\ref{v1firstest}) becomes
\begin{align}
\label{v1desiredintest} \int_0^\tau \!\!\!\! \int_{r}^{2r} v_1^2 \, dV dt & \leq C C_1 \epsilon \left(\delta_3^2 \rho^2 + \delta_4^2 r^8 \right) \\
\label{v1desiredsupest} r^2 v_1(r,t) & \leq C \sqrt{C_1 \epsilon} \left( \delta_3 \frac{\rho}{r} + \delta_4 r^3 \right).
\end{align}

Returning to (\ref{v1givenest}), we also have
\begin{align}\label{reta1est}
\nonumber & \int_0^\tau \!\!\!\! \int_{r}^{2r} r^4 (r \eta_1)^2 \, dV dt \\
& \quad \qquad  \leq C C_1 \epsilon \left( \delta_3^2 \left( \rho^{6 - 2\alpha_1} r^{-2 + 2 \alpha_1} + \rho^2 r^{6 - 2 \alpha_2} \right) + \delta_4^2 \left( \rho^{4 - 2\alpha_1} r^{4 + 2\alpha_1} + r^{12 - 2\alpha_2} \right) \right).
\end{align}
Proposition \ref{inhomogeneousprop}$c$ implies
\begin{equation}\label{v1tildefirstest}
\int_0^\tau \!\!\!\! \int_{r}^{2r} \tilde{v}_1^2 \, dV dt \leq C C_1 \epsilon \left( \delta_3^2 \left( \rho^4 + \rho^2 r^4 \right) + \delta_4^2 \left( \rho^3 r^4 + r^{8} \right) \right).
\end{equation}
Assuming also that $\rho \delta_4 \leq \delta_3 \leq \delta_4,$ we may simplify
$$\delta_3^2 \rho^2 r^4 \leq \delta_3^2 \rho^4 + \delta_4^2 r^8$$
$$\delta_4^2 \rho^3 r^4 \leq \delta_4^2 \left( \rho^6 + r^8 \right) \leq \delta_3^2 \rho^4 + \delta_4^2 r^8.$$
Then (\ref{v1tildefirstest}) becomes
\begin{equation}\label{v1tildedesiredintest}
\int_0^\tau \!\!\!\! \int_{r}^{2r} \tilde{v}_1^2 \, dV dt \leq C C_1 \epsilon \left( \delta_3^2 \rho^4 + \delta_4^2 r^{8} \right).
\end{equation}

From (\ref{reta1est}) and Proposition \ref{inhomogeneousprop}$a,$ provided that $\alpha_2 \leq 1/2,$ we also have
\begin{equation}\label{v1tildethirdest}
r^6 \tilde{v}_1^2 \leq C C_1 \epsilon \left( \delta_3^2 \left( \rho^4 + \rho^2 r^5 \right) + \delta_4^2 \left( \rho^{4 - 2 \alpha_1} r^4 + r^{10} \right) \right).
\end{equation}
Note that
$$\delta_3^2 \rho^2 r^5 \leq \delta_3^2 \rho^4 + \delta_4^2 r^{10}.$$
Since $\alpha_1 \leq 1/2,$ we have $\rho^{\frac{4 - 5\alpha_1}{3}} \delta_4 \leq \sqrt{\rho} \delta_4 \leq \delta_3,$ and
\begin{align*}
\delta_4^2 \rho^{4 - 2 \alpha_1} r^4 & \leq \delta_4^2 \left( \rho^{\frac{5}{3}(4 - 2\alpha_1)} + r^{10} \right)  \leq \delta_4^2 \left( \rho^{\frac{2}{3}(4 - 5\alpha_1)} \rho^4 + r^{10} \right)  \leq \delta_3^2 \rho^4 + \delta_4^2 r^{10}.
\end{align*}
Hence (\ref{v1tildethirdest}) simplifies to
\begin{align}
\nonumber r^6 \tilde{v}_1^2 & \leq C C_1 \epsilon \left( \delta_3^2 \rho^4 + \delta_4^2 r^{10} \right) \\
\label{v1tildedesiredsupest} r \tilde{v}_1 & \leq C \sqrt{C_1 \epsilon} \left( \delta_3 \left( \frac{\rho}{r} \right)^2 + \delta_4 r^3 \right)
\end{align}

Using the abbreviation
$$\dashint_{P_{r,t}} = r^{-6} \int_{t - r^2}^t \int_{3r/4}^{3r/2} \, dV dt$$
we now define
\begin{align*}
v'_0(r,t) & = \left( \dashint_{P_{r,t}} v_\varphi^2 + v_0^2 \right)^{1/2},  & \tilde{v}'_0(r,t) & = \left( \dashint_{P_{r,t}} \tilde{v}_\varphi^2 + \tilde{v}_0^2 \right)^{1/2} \\
v'_1(r,t) & = \left( \dashint_{P_{r,t}} v_\psi^2 + v_\xi^2 + v_1^2 \right)^{1/2}, &  \tilde{v}'_1(r,t) & = \left( \dashint_{P_{r,t}} \tilde{v}_\psi^2 + \tilde{v}_\xi^2 + \tilde{v}_1^2 \right)^{1/2}.
\end{align*}
In view of (\ref{vinitialest}), (\ref{vtildeinitialest}), (\ref{v0desiredest}), and (\ref{v0tildedesiredest}), we have
\begin{equation}\label{v0'supbound}
v'_0 + r^{-1} \tilde{v}'_0 \leq C \left( 1 + C_1 \sqrt{\epsilon} \right) w_0.
\end{equation}
For $2\rho \leq r \leq 1/4,$ from (\ref{vsidest}) and (\ref{v1desiredintest}), we have
\begin{align}\label{v1'intest}
\nonumber \int_{r^2}^{\tau} \!\!\! \int_r^{2r} (v_1')^2 \, dV dt & \leq C \int_{0}^{\tau} \!\!\! \int_{3r/4}^{3r} v_\psi^2 + v_\xi^2 + v_1^2 \, dV dt \\
& \leq C \left( 1 + C_1 \epsilon \right) \left( \delta_3^2 \rho^2 + \delta_4^2 r^8 \right)
\end{align}
where the first line follows by Fubini's Theorem as in (\ref{fubini}).
Similarly, by (\ref{vtildesidest}) and (\ref{v1tildedesiredintest}), we have
\begin{equation}\label{v1'tildeintest}
\int_{r^2}^{\tau} \!\! \int_r^{2r} (\tilde{v}'_1)^2 \, r dr dt \leq C \left( 1 + C_1 \epsilon \right) \left( \delta_3^2 \rho^2 \left( \frac{\rho}{r} \right)^2 + \delta_4^2 r^6 \right).
\end{equation}
Lastly, by (\ref{vsidest}), (\ref{vtildesidest}), (\ref{v1desiredsupest}) and (\ref{v1tildedesiredsupest}), we have
\begin{equation}\label{v1'finalsupbound}
r^2 v_1' + r \tilde{v}_1' \leq C \left( 1 + \sqrt{C_1 \epsilon} \right) \left( \delta_3 \frac{\rho}{r} + \delta_4 r^3 \right).
\end{equation}
By Proposition \ref{sizeofcurvprop}, (\ref{vcomparison}), and (\ref{v0'supbound}), there holds
\begin{align}
\nonumber h(r,t) \leq C \left( \dashint_{P_{r,t}} g_1^2 + r^{-2} g_2^2 \right)^{1/2} & \leq C \left( \dashint_{P_{r,t}} v^2 + r^{-2} \tilde{v}^2 \right)^{1/2} \\
\nonumber & \leq C \left( v'_0 + v'_1 + r^{-1} \left( \tilde{v}'_0 + \tilde{v}'_1 \right) \right) \\
\label{handv'bound} & \leq C \left( 1 + C_1 \sqrt{\epsilon} \right) w_0 + C \left( v'_1 + r^{-1}\tilde{v}'_1 \right).
\end{align}

To close the bootstrap, we now assume
\begin{equation}\label{bootstrapsmallness}
C \left( 1 + C_1 \sqrt{\epsilon} \right) < C_1 / 2.
\end{equation}
Then (\ref{handv'bound}) becomes
\begin{equation}
h(r,t) - \frac{C_1}{2} w_0(r,t) \leq C \left( v'_1 + r^{-1} \tilde{v}'_1 \right).
\end{equation}
Noting (\ref{w010bound}) and applying (\ref{v1'intest}-\ref{v1'tildeintest}), for $r^2 \leq \tau,$ we obtain
\begin{align*}
\int_0^\tau \!\!\! \int_r^{2r} \left( h - \frac{C_1}{2} w_0 \right)_+^2 \, dV dt & \leq C \int_{r^2}^\tau \!\! \int_{r}^{2r} (v_1')^2 + r^{-2} (\tilde{v}_1')^2 \, dV dt  \\
 & \leq \frac{C_1}{2} \left( \delta_3^2 \rho^2 + \delta_4^2 r^6 \right).
\end{align*}
This concludes the proof of (\ref{bootstrapimplication}), from which we deduce (\ref{bootstraphypothesis}) with $\tau = T.$

Finally, ($2'$) follows from (\ref{v1'finalsupbound}-\ref{handv'bound}), while ($3'$) follows by integrating (\ref{handv'bound}) and applying (\ref{wintegral}-\ref{wintegral2}), together with (\ref{v1'intest}-\ref{v1'tildeintest}).

To complete the proof of ($1'$), we must modify slightly the bounds on $g_2.$ Recall that
$$r g_2 \leq r^2h \leq e.$$
In light of (\ref{delta34assn}), in all of the above estimates, we may let
$$\beta_1 = \alpha_1, \qquad \beta_2 = \alpha_2 + 1$$
$$\delta_1 = \delta_3 = \sqrt{\epsilon_1}, \qquad \delta_2 = \delta_4 = \sqrt{\epsilon_2}.$$
In lieu of (\ref{vtildeinitialest}), we have
\begin{equation}\label{vtildealtinitialest}
 r \tilde{v}_\varphi(r,t) \leq C \!\! \left( \sqrt{\epsilon_1} \left( \frac{\rho}{r} \right)^{2 - \alpha_1 } w^{5 - \alpha_1}(r,t) w^{1 + \alpha_1}(1,t) + \sqrt{\epsilon_2} r^{2 - \alpha_2} w^{1 + \alpha_2}(r,t) w^{5 - \alpha_2}(1,t) \right)\!\!.
\end{equation}
Returning to (\ref{eta0toreturnto}), we have
\begin{align*}
\nonumber r^2 \left( r \eta_0(r,t) \right) & \leq C\left(\sqrt{\epsilon_1} \left( \frac{\rho}{r} \right)^{2 - \alpha_1} + \sqrt{\epsilon_2} r^{2-\alpha_2} \right) r w_0(r,t) \\
& \leq C \sqrt{\epsilon \epsilon_1} \frac{\rho^{3 - 2\alpha_1}}{r^{4 - 2\alpha_1}} w^3(r,t) \\
& \qquad + C \sqrt{\epsilon \epsilon_2} \left( \rho^{1 - \alpha_1} r^{\alpha_1 - \alpha_2} + \rho^{2 - \alpha_1} r^{\alpha_1 - \beta_2} + r^{4 - \alpha_2 - \beta_2} \right) w^{\beta_2}(r,t) w^{3 - \beta_2}(1,t).
\end{align*}
Proposition \ref{inhomogeneoushardprop} implies
\begin{align}
\nonumber | \tilde{v}_0(r,t) | & \leq C \sqrt{\epsilon} \left(
\sqrt{\epsilon_1} \frac{\rho^{2 - \alpha_1}}{r^{3 - \alpha_1}} w^3(r,t) + \sqrt{\epsilon_2} r^{1-\alpha_2} w^{\beta_2}(r,t) w^{3 - \beta_2}(1,t) \right) \\
\label{v0tildealtsupbound} r | \tilde{v}_0(r,t) | & \leq C \sqrt{\epsilon} \left(
\sqrt{\epsilon_1} \left( \frac{\rho}{r} \right)^{2 - \alpha_1} w^3(r,t) + \sqrt{\epsilon_2} r^{2-\alpha_2} w^{\alpha_2}(r,t) w^{3 - \alpha_2}(1,t) \right).
\end{align}
Combining (\ref{vtildesidest}), (\ref{v1tildedesiredsupest}), (\ref{vtildealtinitialest}), and (\ref{v0tildealtsupbound}), we conclude from (\ref{vcomparison}) that $r g_2$ satisfies ($1'$), as was shown earlier for $f_1.$ Proposition \ref{sizeofcurvprop}$a$ now implies that $e(r,t)$ obeys ($1'$), as desired.
\end{proof}

\begin{lemma}\label{decaylemma} For $\epsilon, \lambda, \tau > 0,$ there exists $\delta_0 = \delta_0(\epsilon,\lambda,\tau) > 0$ as follows. Let $A(t)$ be a smooth solution on $U_\lambda^1\times \LB 0, T \right),$ with
\begin{align}
\label{decaylemmaassumption1} \sup_{\substack{\lambda \leq \rho \leq 1/2 \\ 0 < t < T}} \int_{\rho}^{2\rho} |F(t)|^2 \, dV & \leq  \epsilon < \epsilon_0 \\
\label{decaylemmaassumption2} \int_{0}^{T} \!\!\!\! \int_\lambda^1 |D^*F|^2 \, dV dt & < \delta_0^2.
\end{align}
Then
\begin{equation}\label{decaylemmaest}
\sup_{\tau \leq t < T} e(r,t) < C \sqrt{\epsilon} \left( \left(\frac{\lambda }{r} \right)^2 + r^2 \right) \qquad \left( 2 \lambda \leq r \leq 1/2 \right).
\end{equation}
\end{lemma}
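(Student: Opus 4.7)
The plan is to derive the conclusion as a direct application of Theorem \ref{decaythm}. The estimate $(1')$ of that theorem combines a time-independent part, $\sqrt{\epsilon_1}(\lambda/r)^{2-\alpha_1}(\lambda/r)^{\alpha_1} + \sqrt{\epsilon_2}r^{2-\alpha_2}r^{\alpha_2} = \sqrt{\epsilon_1}(\lambda/r)^2 + \sqrt{\epsilon_2}r^2$, together with time-dependent factors $w^a(r,t) = (r^2/(r^2+t))^{a/2}$. For $t \geq \tau > 0$, each $w^a$ is controlled by a constant $C = C(\tau)$, so $(1')$ collapses to precisely $e(r,t) \lesssim \sqrt{\epsilon}((\lambda/r)^2 + r^2)$ when we choose $\epsilon_1, \epsilon_2 \sim \epsilon$. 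Thus the task reduces to verifying the three decay hypotheses $(1)$--$(3)$ of Theorem \ref{decaythm}, with appropriate numerical choices.

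The verification proceeds via $\epsilon$-regularity. Assumption \eqref{decaylemmaassumption1}, together with Lemma \ref{annularantibubble}, yields the pointwise bound $e(r,t) \leq C\sqrt{\epsilon}$ on each dyadic annulus $U_r^{2r}$ with $\lambda \leq r \leq 1/2$ (uniformly for $t$ past the initial transient), which gives $(1)$ for suitable exponents $\alpha_1, \alpha_2 \in [0, 1/2)$. Assumption \eqref{decaylemmaassumption2} combined with Lemma \ref{annularantibubble}$(b)$ produces the pointwise estimate $h(r,t) \leq Cr^{-2}\|D^*F\|_{L^2(U_{r/4}^{4r} \times [t-r^2,t])} \leq C\delta_0/r^2$, which underwrites $(2)$ with parameters $\delta_1, \delta_2$ proportional to $\delta_0$ times suitable powers of $\lambda$. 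The Fubini--Tonelli argument from \eqref{fubini}, applied to the global bound $\int\!\!\int|D^*F|^2 \leq \delta_0^2$, then gives the time-integrated estimate $\int_0^T h(r,t)^2\,dt \leq C\delta_0^2/r^2$, which supplies $(3)$ with $\delta_3, \delta_4 = O(\delta_0)$.

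The structural constraints of Theorem \ref{decaythm} --- $\delta_3^2 \leq \min[\delta_1^2, \epsilon_1]$, $\sqrt{\lambda}\,\delta_4 \leq \delta_3 \leq \delta_4$, $\epsilon_i < \epsilon_0$ --- are all compatible with the above choices once $\delta_0 = \delta_0(\epsilon, \lambda, \tau)$ is taken sufficiently small. The main technical obstacle lies in hypothesis $(1)$: the putative RHS $\sqrt{\epsilon_1}(\lambda/r)^{2-\alpha_1} + \sqrt{\epsilon_2}r^{2-\alpha_2}$ can be substantially smaller than $\sqrt{\epsilon}$ when $r$ is near $\sqrt{\lambda}$, so the uniform $\epsilon$-regularity bound $e \leq C\sqrt{\epsilon}$ does not automatically satisfy it. I expect this to be addressed by taking $\alpha_1, \alpha_2$ close to $1/2$ (which flattens the RHS to roughly $\sqrt{\epsilon}\lambda^{3/4}$ at its minimum) and allowing $\epsilon_0$ to depend on $\lambda$; alternatively, a compactness argument in which a sequence of would-be counterexamples with $\delta_0^{(n)} \to 0$ converges to a Yang-Mills connection satisfying Uhlenbeck's decay \eqref{uhlenbeckdecay} would yield a contradiction.
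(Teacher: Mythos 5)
Your main line (apply Theorem \ref{decaythm} directly) is circular and cannot be repaired along the lines you suggest. Hypothesis~(1) of Theorem \ref{decaythm} already demands a decay $\sqrt{\epsilon_1}(\rho/r)^{2-\alpha_1} + \sqrt{\epsilon_2}\,r^{2-\alpha_2}$ in $r$ --- a slightly weaker version of the very estimate \eqref{decaylemmaest} you are trying to prove. The role of Theorem \ref{decaythm} is to \emph{propagate} that decay forward in time, not to create it from the flat $\epsilon$-regularity bound $e\lesssim\sqrt{\epsilon}$; as you yourself notice, the two are incompatible near $r\sim\sqrt{\lambda}$. Neither workaround you float closes the gap: pushing $\alpha_i\to1/2$ (the cap imposed in the theorem) still leaves the hypothesized right-hand side near $\sqrt{\epsilon}\,\lambda^{3/4}$ at its minimum, far below $\sqrt{\epsilon}$; and letting $\epsilon_0$ depend on $\lambda$ is inert, since $\epsilon_0$ only caps $\epsilon$ and has no effect on the $r$-dependence in hypothesis~(1).

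Your parenthetical alternative is in fact the paper's proof. One assumes a sequence of solutions $A_i$ satisfying \eqref{decaylemmaassumption1} with $\delta_i\to0$ and violating \eqref{decaylemmaest} at some $(r_i,t_i)$ with $t_i\geq\tau$; uses Lemma \ref{annularantibubble}$b$ for uniform $C^\infty$ control of $A_i(\cdot,t_i)$; applies the Coulomb-gauge-patching argument of \cite{donkron}, \S 4.4.2, to extract a $C^\infty$ subsequential limit $\bar A$ on $U^{3/4}_{3\lambda/2}$; observes that $D^*F_{\bar A}=0$, so $\bar A$ is Yang--Mills; and then contradicts Uhlenbeck's elliptic decay \eqref{uhlenbeckdecay}. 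This is precisely why $\delta_0$ in the lemma is permitted to depend on $\lambda$ and $\tau$ --- the compactness is at a fixed scale and a fixed elapsed time, with no uniform rate. The structural purpose of Lemma \ref{decaylemma} in the paper is to supply the seed decay that Corollary \ref{decaycor} (via Theorem \ref{decaythm}) then sharpens; routing Lemma \ref{decaylemma} back through Theorem \ref{decaythm} inverts that logic. You identified the obstruction correctly, but the proposal should be reorganized so that the compactness argument is the proof, not a fallback.
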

%\begin{proof} By contradiction to the static case, using the parabolic regularity Lemma \ref{annularantibubble}$b.$
%\end{proof}
\begin{proof} By \cite{uhlenbeckremov} or \cite{radecay}, there exists a universal constant $C_0$ such that any Yang-Mills connection on $U^{3/4}_{3\lambda/2}$ satisfying (\ref{decaylemmaassumption1}) also obeys (\ref{decaylemmaest}).

Assume that the claim fails for a sequence $\delta_i \searrow 0.$ Then for each $i,$ there exists a solution $A_i(x,t)$ on $\LB 0, T_i \right),$ with $T_i > \tau,$ satisfying (\ref{decaylemmaassumption1}) and
$$\int_{0}^{T_i} \!\!\!\! \int_\lambda^1 |D^*F_{A_i}|^2 \, dV dt \leq \delta_i^2$$
but such that
\begin{equation}\label{decaylemmacontrad}
 e(r_i, t_i) \geq  C_0 \sqrt{\epsilon} \left( \left(\frac{\lambda}{r_i} \right)^2 + r_i^2 \right)
 \end{equation}
for a certain radius $r_i$ and time $\tau \leq t_i < T_i.$ 
Translating in time and choosing a subsequence, we may assume that $t_i = \tau$ and
$$r_i \to r_0 \in \LB 2\lambda , 1/2 \RB.$$

According to Lemma \ref{annularantibubble}$b,$ each derivative of the curvature of $A_i(x,\tau)$ is uniformly bounded. We may therefore apply the Coulomb-gauge-patching argument\footnote{To give a proof without gauge fixing, one can observe that the weak inequality (\ref{f1evol}) is preserved by $C^0$ convergence, with $\partial_t f_1 \longrightarrow 0,$ and rerun the estimates of the previous Theorem with $T = \infty.$} of \cite{donkron}, \S 4.4.2, to conclude that
$$A_i(x,\tau) \to \bar{A}(x) \mbox{ in } C^\infty\left( U^{3/4}_{3\lambda/2} \right)$$
 after changing gauges and choosing a subsequence. Since $D^*F_{A_i} \to D^*F_{\bar{A}}$ and
$$\| D^*F_{A_i} \|_{L^\infty \left( U^{3/4}_{3\lambda/2} \right) } \leq C \delta_i \to 0$$
it follows that $\bar{A}$ is Yang-Mills. But (\ref{decaylemmaassumption1}) and (\ref{decaylemmacontrad}), at radius $r_0,$ are again satisfied by $\bar{A}.$ This is a contradiction.
\end{proof}

\begin{cor}\label{decaycor} For $\epsilon, \lambda > 0,$ there exists $\delta_0 = \delta_0(\epsilon,\lambda) > 0$ as follows. Let $A(t)$ be a smooth solution on $U_\lambda^1\times \LB 0, 1 \RB,$ with
\begin{align}
\label{decaycorassumption} \sup_{\substack{\lambda \leq \rho \leq 1/2}} \int_{\rho}^{2\rho} |F(1)|^2 \, dV & \leq  \epsilon < \epsilon_0, \qquad \sqrt{\lambda}\delta \leq \delta_\lambda \\
\nonumber \int_{0}^{1} \!\!\! \int_\lambda^{2\lambda} |D^*F|^2 \, dV dt & \leq \delta_\lambda^2, \qquad \int_{0}^{1} \!\!\! \int_\lambda^1 |D^*F|^2 \, dV dt \leq \delta^2 < \delta_0^2.
\end{align}
Then
\begin{align}
\tag{$a$} e(r,1) & \leq C \sqrt{\epsilon} \left( \left(\frac{\lambda}{r} \right)^2 + r^2 \right) \qquad \left( 2 \lambda \leq r \leq 1/2 \right) \\
\tag{$b$} r^2 h(r,1) & \leq C \left( \delta_\lambda \frac{\lambda}{r} + \delta r^3 \right) \qquad \left( 2 \lambda \leq r \leq 1/2 \right).
\end{align}
\end{cor}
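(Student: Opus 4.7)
The plan is to apply Theorem~\ref{decaythm} after verifying its hypotheses, using Lemma~\ref{decaylemma} and the $\epsilon$-regularity results of Section~\ref{epsilonregsection}. By Lemma~\ref{decaylemma} with $\tau = 1/2$ and $T = 1$ (taking $\delta_0$ smaller than $\delta_0(\epsilon, \lambda, 1/2)$ of that lemma), the bound $e(r, t) \leq C\sqrt{\epsilon}((\lambda/r)^2 + r^2)$ holds for $t \in [1/2, 1]$ and $2\lambda \leq r \leq 1/2$, which yields $(a)$ at $t = 1$ and, upon translating time to $[0, 1/2]$, also serves as assumption~(1) of Theorem~\ref{decaythm} with $\alpha_1 = \alpha_2 = 0$ and $\epsilon_1 = \epsilon_2 = C\epsilon$.

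Next, Proposition~\ref{epsilonreg}$(b)$ (applicable at each radius $r \leq 1/\sqrt{2}$, since the parabolic cylinder $[t-r^2, t]$ is then contained in $[0,1]$) together with a Fubini argument as in~(\ref{fubini}) yields
\[
\int_{1/2}^{1} h(r,t)^2 \, dt \;\leq\; C r^{-2} \|D^*F\|_{L^2(U_r \times [0, 1])}^2,
\]
so assumption~(3) holds with $\delta_3 = C\delta_\lambda$ near $r = \lambda$ and $\delta_4 = C\delta$ near $r = 1/2$; the compatibility $\sqrt{\rho}\delta_4 \leq \delta_3$ reduces to the standing hypothesis $\sqrt{\lambda}\delta \leq \delta_\lambda$. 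The same $\epsilon$-regularity gives $r^2 h(r,t) \leq C\delta$ uniformly on the shifted interval; choosing $\beta_1 \in (0, 1)$ close to $1$, $\beta_2 = 0$, and setting $\delta_1 = C\delta / \lambda^{1 - \beta_1}$, $\delta_2 = C\delta$, the first term $\delta_1 (\lambda/r)^{1-\beta_1} = C\delta / r^{1-\beta_1} \geq C\delta$ alone suffices to verify assumption~(2) throughout $[\lambda, 1]$.

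Applying Theorem~\ref{decaythm}$(2')$ at time $t' = 1/2$ (original $t = 1$), the factor $w^3(r, 1/2) \leq (r\sqrt{2})^3$ attenuates the $\delta_1$-contribution to $(\lambda/r)^{1-\beta_1} \delta_1 w^3 \leq C\delta r^{2 + \beta_1}$, which for $\beta_1$ close to $1$ is absorbed into $C\delta r^3$; the $\delta_2$-contribution $r^3 \delta_2 w^3(1, 1/2)$ is directly bounded by $C\delta r^3$, and similarly for the $\delta_4 r^3$ piece. The remaining $\delta_3 \lambda/r$ term completes the target bound $C(\delta_\lambda \lambda/r + \delta r^3)$, yielding $(b)$.

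The principal obstacle is verifying assumption~(2) with the correct $\delta_1$: the crude bound $r^2 h \leq C\delta$ must be captured in the form $\delta_1 (\lambda/r)^{1-\beta_1} + \delta_2 r^{3-\beta_2}$, which forces $\delta_1 \sim \delta \lambda^{-(1-\beta_1)}$. Fortunately, the $w^3$ factor at the final time already decays like $r^3$ and supplies exactly the $r^{1-\beta_1}$ needed to cancel $\lambda^{-(1-\beta_1)}$ in the conclusion, so the large $\delta_1$ leaves no trace beyond $C\delta r^3$ at $t = 1$.
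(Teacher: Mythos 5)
Your treatment of part $(a)$ via Lemma \ref{decaylemma}, and your verification of assumptions $(1)$ and $(3)$ of Theorem \ref{decaythm}, are fine and mirror the paper. The gap is in the single-pass derivation of $(b)$. With $\delta_1 = C\delta\lambda^{-(1-\beta_1)}$ and $\beta_1 < 1$, the $\delta_1$-contribution in $(2')$ at the (shifted) final time $t = 1/2$ is
$$\left(\frac{\lambda}{r}\right)^{1-\beta_1}\delta_1\, w^3(r, \tfrac12) \;\sim\; \frac{\delta}{r^{1-\beta_1}}\cdot r^3 \;=\; \delta\, r^{2+\beta_1},$$
and since $2 + \beta_1 < 3$, this exceeds $\delta r^3$ by the unbounded factor $r^{\beta_1-1}$ as $r$ ranges down toward $\lambda$. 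It also cannot be absorbed by the $\delta_\lambda\lambda/r$ term across the whole annulus: at $r \sim \lambda^{3/8}$ (where $\delta_\lambda\lambda/r$ and $\delta r^3$ are comparable, using the worst case $\delta_\lambda = \sqrt{\lambda}\,\delta$), the needed inequality $r^{2+\beta_1} \leq C(\lambda^{3/2}/r + r^3)$ reduces to $\lambda^{3(\beta_1-1)/8} \leq C$, which fails as $\lambda \to 0$ for every fixed $\beta_1 < 1$. Taking $\beta_1 \to 1$ is not available either: the Theorem requires $\beta_1 < 1$, and $C_1 = C_1(\beta_i)$ need not stay bounded. So the claim that the inflated $\delta_1$ ``leaves no trace beyond $C\delta r^3$'' does not hold.

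The paper circumvents this by running Theorem \ref{decaythm} twice. The first pass uses the crude input $r^2 h \leq C\delta$ on $[1/3, 2/3]$ with $\beta_1 = 1/2$, $\delta_1 = \delta\rho^{-1/2}$, $\delta_2 = 0$, $\delta_3 = \delta_4 = \delta$, and produces only the intermediate bound $\sup_{2/3 \leq t \leq 1} r^2 h(r,t) \leq C\delta(\rho/r + r^{5/2})$; the residual $r^{5/2}$ (rather than $r^3$) is exactly the trace of the $\lambda$-inflated $\delta_1$ that your argument tries to suppress. A second pass on $[2/3, 1]$ with $\beta_2 = 1/2$, $\delta_3 = \delta_\lambda$, and now $\delta_1 = C\delta$ (no longer inflated), upgrades $r^{5/2}$ to $r^3$ and installs $\delta_\lambda$ near the inner boundary, giving $(b)$. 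The inflation of $\delta_1$ is thus tolerated only where its output is re-inserted into the Theorem, not where it is cited as the final conclusion.
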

\begin{proof}
%Note first by Lemma \ref{annularantibubble}$a$ that the hypothesis (\ref{decaycorassumption}) implies (\ref{decaylemmaassumption}).
For $\delta_0$ sufficiently small, Lemma \ref{annularantibubble}$a$ implies (\ref{decaylemmaassumption1}), so Lemma \ref{decaylemma} immediately gives
\begin{align*}
\sup_{\frac{1}{3} \leq t \leq 1} e(r,t) & < C \sqrt{\epsilon} \left( \left(\frac{\lambda }{r} \right)^2 + r^2 \right).
\end{align*}
Lemma \ref{annularantibubble}$b$ also gives
\begin{align}\label{stupidelta}
\sup_{\frac{r^2}{3} \leq t \leq 1} r^2 h(r,t) & < C \delta.
\end{align}
%for all $2 \lambda \leq r \leq 1/2.$
In view of (\ref{stupidelta}), we may then apply Theorem \ref{decaythm} on the time interval $\LB 1/3, 2/3 \RB,$ with $\beta_1 = 1/2$ and
$$\delta_1 = \delta \lambda^{-1/2}, \quad \quad \delta_2 = 0, \quad \quad \delta_3 = \delta_4 = \delta.$$
This yields
$$\sup_{2/3 \leq t \leq 1} r^2 h(r,t) \leq C \left( \frac{\lambda}{r} \right)^{1/2} \left( \delta \lambda^{-1/2} r^3 + \delta \left( \frac{\lambda}{r} \right)^{1/2} \right) + C \delta r^3 \leq C \delta \left( \frac{\lambda}{r} +  r^{5/2} \right).$$
Again applying the Theorem over $\LB 2/3, 1 \RB,$ with $\beta_2 = 1/2$ and $\delta_3 = \delta_\lambda,$ yields ($b$).
\end{proof}

\section{Proof of Main Theorem}\label{completionsection}

Let $x \in M,$ and write $B_R = B_R(x).$ As above, we will assume for simplicity that the metric $g$ is flat, and $B_1 \Subset  M.$ %rev3

Throughout this section, $A(t)$ will denote a smooth solution of the Yang-Mills flow on $B_1 \times \left( -1, T \right),$ with $T > 0.$
Let
$$0 < \epsilon < \epsilon_0, \qquad 0 < \bar{\lambda} < \lambda_0^2, \qquad E \geq \epsilon.$$
Here $\epsilon_0 > 0$ is a universal constant which may decrease in the course of the proofs, and $0 < \lambda_0 < 1$ will be fixed by Lemma \ref{basecaselemma} below. %***rev2.

\begin{defn}\label{lambdadef} Given $0 \leq \tau < T,$ define the \emph{curvature scale} $\lambda(\tau)$ to be the minimal number $0 \leq \lambda \leq 1$ such that
\begin{equation}\label{lambdadefequation}
\sup_{\substack{\lambda < \rho < 1 \\ \tau \leq t < T }}  \int_{\rho/2}^{\rho} |F(t)|^2 \, dV < \epsilon.
\end{equation}
By convention, also let
\begin{equation*}
\lambda(\tau) = \begin{cases} \lambda(0) & \left( \tau \leq 0\right) \\
\lim_{\tau' \nearrow \,\, T} \lambda(\tau') & \left( \tau \geq T \right).
\end{cases}
\end{equation*}
The function $\lambda(\tau)$ is decreasing and continuous from the left, and strictly decreasing at time $\tau$ only if
\begin{equation}\label{lambdacondition}
\int_{\lambda(\tau)/2}^{\lambda(\tau)} |F(\tau)|^2 \, dV = \epsilon.
\end{equation}
Notice that for a given time $\tau,$ if no scale $\lambda < 1$ satisfies (\ref{lambdadefequation}), then $\lambda(\tau) = 1$ fulfills the definition vacuously.
\end{defn}

Our basic assumptions on $A(t)$ will be as follows:
\begin{align}
\label{basicassumptions1} \sup_{-1 < t < T \,\,\,} \!\! \int_{B_1} |F(t)|^2 \, dV & \leq E \\
\label{basicassumptions2} \int_{-1}^T \! \int_{B_1} |D^*F|^2 \, dV dt \leq \delta^2 & < \delta_0^2 \\
\label{basicassumptions3} \bar{\lambda} \leq \lambda(0) \leq \lambda_0.
\end{align}
Here $\delta_0 > 0$ is to be determined. %and will depend at most on $E, \epsilon,$ and $\bar{\lambda}.$

For suitable $\hat{\rho} \leq 1,$ we will often perform the parabolic rescaling %***rev2
\begin{equation}\label{rescaling}
A_{\hat{\rho}} (x, t) = \hat{\rho} A \left( \hat{\rho} x, \hat{\rho}^2 t + t_0 \right)
\end{equation}
which preserves (YM), as well as the basic assumptions (\ref{basicassumptions1}-\ref{basicassumptions3}), after choosing $t_0 \geq 0$ to preserve (\ref{basicassumptions3}) if necessary.

%\begin{rmk} 
%While $\lambda(\tau)$ is only continuous from the left, the next Lemma shows that it cannot jump by more than a factor over a brief time interval, without costing a quantum of energy. The proof is by contradiction to the crude decay estimate, Lemma \ref{decaylemma}. Analogously, Theorem \ref{technicaltheorem} will show, using the sharp decay estimates developed above, that $\lambda(\tau)$ cannot decrease by more than an exponential factor, over a unit time interval.
%\end{rmk}

%***rev2

%The following lemma combines basic decay information with the observation made in Lemma \ref{baselemma} (above), in order to obtain preliminary control on $\lambda(\tau).$ It will serve as the base case for our main technical result, Theorem \ref{technicaltheorem}, which requires the full strength of Proposition \ref{decaythm}.

\begin{lemma}\label{basecaselemma} %Assume (\ref{basicassumptions1}-\ref{basicassumptions3}), with $\delta_0$ depending on $E, \epsilon,$ and $\lambda.$
%\noindent $(a)$
There exists a universal constant $\lambda_0 > 0$ as follows. Let
$$0 \leq \tau_1 < T, \qquad \lambda_1 = \lambda(\tau_1), \qquad \tau_2 = \tau_1 + \frac{\left( \epsilon \lambda_0 \lambda_1 \right)^2 }{ E \delta^2}.$$ %***rev2
Assume (\ref{basicassumptions1}-\ref{basicassumptions3}), with $\delta_0$ depending only on $\epsilon.$ Then
\begin{equation*}
\lambda(\tau) > \lambda_0 \lambda_1 \quad (\tau \leq \tau_2).
\end{equation*}
%\vspace{2mm}
%\noindent $(b)$ Assume that (\ref{lambdacondition}) is satisfied at time $ \tau_1.$ If
%\begin{equation*}\label{basecaselemma2}
%S(\lambda_1, \tau_1, \tau) \geq - \frac{\epsilon}{100} \lambda_1^2
%\end{equation*}
%where $S$ is the integral curvature quantity of (\ref{Sdefinition}), then
%\begin{equation*}
%\lambda(\tau) > \kappa \sqrt{\frac{\epsilon}{E}} \cdot \lambda_1.
%\end{equation*}
\end{lemma}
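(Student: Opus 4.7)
The plan is to argue by contradiction to the crude decay estimate, Lemma~\ref{decaylemma}. Suppose that some $\tau_* \in (\tau_1, \tau_2]$ satisfies $\lambda(\tau_*) \leq \lambda_0 \lambda_1$; by left continuity and the minimality of the curvature scale, this entails the existence of a shrunken bubble with $\int_{\rho/2}^{\rho}|F(t^-)|^2 \, dV \geq \epsilon/2$ at some scale $\rho \leq \lambda_0 \lambda_1$ and time $t^- \geq \tau_*$, whereas $\lambda(\tau_1) = \lambda_1$ yields $\int_{U_{\lambda_1/2}^{\lambda_1}}|F(t^+)|^2 \, dV \geq \epsilon/2$ at some $t^+ \geq \tau_1$. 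After translating $\tau_1 \mapsto 0$ and parabolically rescaling by $\lambda_1$ via (\ref{rescaling}), the rescaled solution $\bar A$ on $B_{1/\lambda_1}$ satisfies $\bar\lambda(0) = 1$ and $\bar\lambda(\bar\tau_*) \leq \lambda_0$ for some $\bar\tau_* \leq (\epsilon\lambda_0/\delta)^2/E$.

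The core step is to apply Lemma~\ref{decaylemma} to the shifted solution $\bar A(\cdot,\bar\tau_* + \cdot)$ on $U_{\lambda_0}^1$: the assumption $\bar\lambda(\bar\tau_*) \leq \lambda_0$ supplies (\ref{decaylemmaassumption1}) with $\lambda = \lambda_0$, and the scale-invariant global bound $\int|D^*F|^2 \leq \delta^2$ supplies (\ref{decaylemmaassumption2}) once $\delta_0(\epsilon)$ is taken at most the constant produced by Lemma~\ref{decaylemma} at parameters $(\epsilon, \lambda_0, \tau_{\rm wait})$, with $\tau_{\rm wait}$ a fixed waiting time depending only on $\epsilon$ and $\lambda_0$. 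The conclusion gives the pointwise decay $|\bar F(r,t)| \leq C\sqrt{\epsilon}(\lambda_0^2/r^4 + 1)$ for $r \in [2\lambda_0, 1/2]$ and $t \geq \bar\tau_* + \tau_{\rm wait}$. Integrating over the annulus $U_{1/2}^1$ (rescaled) and translating back to original coordinates bounds $\int_{U_{\lambda_1/2}^{\lambda_1}}|F(t)|^2 \, dV$ by $C\epsilon(\lambda_0^4 + \text{boundary terms})$, which is less than $\epsilon/4$ for $\lambda_0$ chosen sufficiently small.

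To close the contradiction, compare this with the lower bound at $t^+$ via Lemma~\ref{annularantibubble}(a) at scale $R = \lambda_1$. Crucially, the choice $\tau_2 - \tau_1 = (\epsilon\lambda_0\lambda_1)^2/(E\delta^2)$ gives $\sqrt{(\tau_2 - \tau_1)E}/\lambda_1 = \epsilon\lambda_0/\delta$, so the error $\gamma$ from Lemma~\ref{annularantibubble} is a universal multiple of $\delta^2 + \epsilon\lambda_0$, independent of $E$. For $\lambda_0$ and $\delta_0(\epsilon)$ chosen sufficiently small in the right order, this error is less than $\epsilon/10$, and the resulting inequality $\int_{U_{\lambda_1/2}^{\lambda_1}}|F(t^+)|^2 \leq \int_{U_{\lambda_1/4}^{2\lambda_1}}|F(t)|^2 + \gamma < \epsilon/2$ contradicts the lower bound from $t^+$ produced in the first paragraph.

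The principal obstacle I anticipate is reconciling Lemma~\ref{decaylemma}'s waiting time $\tau_{\rm wait}$ with the tight scaling $\tau_2 - \tau_1 = O(\lambda_1^2/E)$ and the stipulation that $\delta_0$ depend only on $\epsilon$. In original coordinates the waiting time is $\tau_{\rm wait}\lambda_1^2$; when absorbed into the time interval used in Lemma~\ref{annularantibubble}, the extra contribution to $\gamma$ is $C\delta \sqrt{\tau_{\rm wait} E}$, which reintroduces an $E$-dependence. This should be resolved either by choosing $\tau_{\rm wait}$ small relative to $1/E$ (a choice absorbed into the definition of $\delta_0$), or by combining Lemma~\ref{antibubble}(b) with Lemma~\ref{decaylemma} to propagate the curvature decay across the waiting interval without invoking a fresh $\gamma$-error. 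Fixing the universal constants in the order $\lambda_0$ first, then $\tau_{\rm wait}(\lambda_0, \epsilon)$, and finally $\delta_0(\epsilon)$, should close the contradiction.
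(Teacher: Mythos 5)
Your approach (contradiction via the crude decay estimate, Lemma~\ref{decaylemma}, combined with the annular energy transport in Lemma~\ref{annularantibubble}$a$, and the key observation that $\tau_2 - \tau_1 = (\epsilon\lambda_0\lambda_1)^2/(E\delta^2)$ makes the transport error $\gamma \sim \delta^2 + \epsilon\lambda_0$ independent of $E$) is indeed the paper's strategy, and the third paragraph is essentially how the contradiction closes. But the timing problem you raise in the fourth paragraph is a real obstacle for your version, and the fix is simpler than either of your two suggestions: there is no need to shift the origin to $\bar\tau_*$ and then wait \emph{forward}. Instead, work directly at a single time $\tau \in [\tau_1,\tau_2]$ for which $\lambda(\tau) \le \lambda_0\lambda_1$, so that the annular small-energy condition
$\sup_{\lambda_0\lambda_1 \le \rho \le 1}\int_{\rho/2}^{\rho}|F(\tau)|^2\,dV \le \epsilon$ holds \emph{at that time}. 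Then propagate it \emph{backward} over an interval of length $\lambda_1^2$ via Lemma~\ref{annularantibubble}$a$, giving the uniform bound $\le 2\epsilon$ on $[\tau - \lambda_1^2, \tau]$. Since the basic assumptions (\ref{basicassumptions1})--(\ref{basicassumptions3}) place the flow on $(-1,T)$ and $\lambda_1 \le \lambda_0 < 1$, this past history is always available; after rescaling by $\lambda_1/2\lambda_0$ the waiting time $\tau_{\rm wait} = \lambda_0^2/4$ in Lemma~\ref{decaylemma} corresponds to $\sim\lambda_1^2$ in original coordinates and fits comfortably inside $[\tau-\lambda_1^2,\tau]$, regardless of how short $\tau_2-\tau_1$ is. This removes the need to carry an extra $\gamma$-error across a forward waiting interval, which is what reintroduced $E$ into your estimate.

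Two smaller points. First, the ``shrunken bubble'' at scale $\rho \le \lambda_0\lambda_1$ in your opening paragraph does not follow from Definition~\ref{lambdadef} (a small curvature scale at $\tau_*$ means energy on annuli of larger radius stays below $\epsilon$ for $t \ge \tau_*$; it asserts nothing about concentration at smaller radius at $\tau_*$ itself), and it is not actually used in your argument. The lower bound you need is simply $\int_{\lambda_1/2}^{\lambda_1}|F(\tau_1)|^2\,dV = \epsilon$, available by taking $\tau_1$ to satisfy (\ref{lambdacondition}) without loss, and then pushed forward to $\tau$ via Lemma~\ref{annularantibubble}$a$ with the $E$-free $\gamma$. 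Second, your ordering of constants should be: $\lambda_0$ universal; then $\delta_0$ small enough to make $\gamma < \epsilon/2$ in both applications of Lemma~\ref{annularantibubble}$a$ and to supply (\ref{decaylemmaassumption2}) for Lemma~\ref{decaylemma} with $\lambda = \tau_{\rm wait} = \lambda_0^2/4$. There is no separate $\tau_{\rm wait}$ to tune once the waiting interval is drawn from the past.
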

\begin{proof} %We assume without loss of generality that $\tau_1 = 0.$
%Let $\tau$ be as in ($a$).
Assume, without loss of generality, that (\ref{lambdacondition}) is satisfied at time $\tau_1.$

Let $\tau_1 \leq \tau \leq \tau_2.$ %***rev2 
By Lemma \ref{annularantibubble}$a$, we have %***rev2
\begin{equation}\label{basecaselemmapf2}
\epsilon = \int_{\lambda_1/2}^{\lambda_1} |F(\tau_1)|^2 \, dV \leq \int_{\lambda_1/4}^{2\lambda_1} |F(\tau)|^2 \, dV + \gamma
\end{equation}
where
\begin{align*}
\gamma = C \delta \left( \delta + \frac{\sqrt{\left( \tau - \tau_1 \right) E}}{\lambda_1} \right) & \leq C \left( \delta^2 + \lambda_0 \epsilon \right).
\end{align*}
Assuming that $\delta^2 < \epsilon / 4C$ and $\lambda_0 \leq 1/4C,$ we have
$$\gamma < \epsilon/2.$$
Rearranging (\ref{basecaselemmapf2}) yields
\begin{equation}\label{basecaselemmapf3}
\frac{\epsilon}{2} < \epsilon - \gamma \leq \int_{\lambda_1/4}^{2\lambda_1} |F(\tau)|^2 \, dV.
\end{equation}

Assume, for the sake of contradiction, that
$$\lambda(\tau) \leq \lambda_0 \lambda_1$$
or in particular that
\begin{equation}\label{basecaselemmaepsilonest}
\sup_{\lambda_0 \lambda_1 \leq \rho \leq 1} \int_{\rho / 2}^\rho |F(\tau)|^2 \, dV \leq \epsilon.
\end{equation}
By Lemma \ref{annularantibubble}$a,$ for $\delta$ sufficiently small (depending only on $\lambda_0,\epsilon$), for any $\rho, t$ with
$$2 \lambda_0 \lambda_1 \leq \rho \leq \lambda_1 / 2\lambda_0, \qquad \lambda_1^2 \leq t \leq \tau$$ 
we have
\begin{equation}\label{4epsilon}
\begin{split}
\int_{\rho / 2}^\rho |F(t)|^2 \, dV & \leq \int_{\rho / 4}^{2\rho} |F(\tau)|^2 \, dV + \gamma \\
& \leq 3 \epsilon + \gamma \leq 4 \epsilon.
\end{split}
\end{equation} %***rev2

We now let $\hat{\rho} = \lambda_1 / 2 \lambda_0,$ and define the rescaled solution
$$A_{\hat{\rho}}(x,t) = \hat{\rho} A \left( \hat{\rho}x, \hat{\rho}^2 \left( t - \tau \right) + \tau \right).$$
Then from (\ref{4epsilon}), this solution satisfies
\begin{equation*}
\sup_{\substack{2 \lambda_0^2 \leq \rho \leq 1/ 2 \\ \tau - 4\lambda_0^2 \leq t \leq \tau}} \int_\rho^{2\rho} |F_{A_{\hat{\rho}} }(t) |^2 \, dV \leq 4 \epsilon.
\end{equation*}
Applying Lemma \ref{decaylemma}, (\ref{decaylemmaest}) now reads
\begin{equation*}
\sup_{\tau - 4\lambda_0^2 \leq t \leq \tau} e_{A_{\hat{\rho}}} (r,t) < C \sqrt{\epsilon} \left( \left( \frac{2 \lambda_0^2}{r} \right)^2 + r^2 \right), \qquad \left( 4 \lambda_0^2 \leq r \leq 1/2 \right).
\end{equation*}
Scaling back, we have
\begin{equation*}
e(s,\tau) \leq C \sqrt{\epsilon} \left( \left( \frac{2 \lambda_0^2}{\hat{\rho}^{-1} s} \right)^2 + \hat{\rho}^{-2} s^2 \right), \qquad \left( 4 \lambda_0^2 \hat{\rho} \leq s \leq \hat{\rho}/2 \right)
\end{equation*}
which, for all $\lambda_0 \leq r \leq 1/4,$ implies a bound
%choose $\lambda = \tau = \lambda_0^2 / 4.$ %on $\LB \tau - \lambda_1^2 , \tau \RB,$
%The conclusion (\ref{decaylemmaest}), after a short calculation, implies a bound
\begin{equation}\label{basecaselemmaest}
\begin{split}
\int_{ \lambda_1 \lambda_0 / r}^{\lambda_1 r/ \lambda_0} |F(\tau)|^2 \, dV & \leq C \int_{ \lambda_1 \lambda_0 / r}^{\lambda_1 r / \lambda_0} e(s,\tau)^2 \frac{ds}{s} \\
& \leq C \epsilon \int_{ \lambda_1 \lambda_0 / r}^{\lambda_1 r/ \lambda_0} \left( \left( \frac{\lambda_0 \lambda_1}{s} \right)^4 + \left( \frac{\lambda_0 s}{\lambda_1} \right)^4 \right) \frac{ds}{s} \\
& \leq C \epsilon r^4.
\end{split}
\end{equation}
%where we have also incorporated (\ref{basecaselemmaepsilonest}).

Assume
$$\lambda_0 \leq \frac{1}{4(2C)^{1/4}}.$$
Letting $r = 4 \lambda_0$ in (\ref{basecaselemmaest}), we obtain
$$\int^{4\lambda_1}_{\lambda_1/4} |F(\tau)|^2\, dV %\leq \int^{\lambda_0}_{\lambda_1/4} |F(\tau)|^2 \, dV
\leq \frac{ C \epsilon }{2C} \leq \frac{\epsilon}{2}$$
which contradicts (\ref{basecaselemmapf3}).
%The proof of ($b$) is similar, with the conclusion of Lemma \ref{baselemma} replacing (\ref{basecaselemmapf3}).
\end{proof}

\begin{prop}\label{shorttimeprop} % Assume that $T \geq 1,$ and %Let $A(t)$ be a solution on $B_1 \times \LB 0,1 \right)$ with $$\sup_{0 \leq t < 1} \int_{B_1} |F(t)|^2 \, dV \leq E.$$
Let
$$0 < \alpha < 1/2, \quad \quad 0 \leq \beta \leq 1 + \alpha$$ %Have \beta_+ \leq \alpha here and we just take \alpha. Only do \alpha < 1/2 so \alpha + \beta < 2.
$$0 \leq \tau_0 \leq \tau_1 \leq \tau_2 \leq T, \qquad 0 < \rho < \lambda_0$$
$$
0 < \kappa \leq \kappa_0, \qquad \kappa_1 = \kappa \sqrt{\frac{\epsilon}{E}}, \qquad \rho^{\frac{2}{3}(1 - \alpha)} \leq \kappa_2 \leq \kappa \kappa_1^2
$$
$$\bar{\rho} = \kappa_2^{-1/4} \sqrt{\rho}, \qquad \bar{\tau} = \kappa_2^{-1/2} \rho^{1 - \alpha}
$$
$$0 < \rho \delta^2 \leq \delta_{\!\rho}^2 \leq \delta^2 \leq \kappa_2^2 \epsilon.$$
Here $0 < \kappa_0 \leq \lambda_0$ is a universal constant. %***rev2

Suppose that for $\rho \leq r \leq 1,$ $A(t)$ satisfies
\begin{align*}
& (1) \quad \sup_{\tau_0 \leq t < \tau_2} r^2 \left( | F| + r |\nabla F| + r^2 |\nabla^{(2)} F| \right) \leq \sqrt{\epsilon} \left( \left( \frac{\rho}{r} \right)^{2 - \alpha} + \kappa_2 r^{2 - \alpha} \right) \\
& (2) \quad \sup_{\tau_0 \leq t < \tau_2} r^3 \left( | D^* F| + r |\nabla D^*F| \right) \leq \delta \left( \left( \frac{\rho}{r} \right)^{1-\alpha} + r^{3 - \beta}  \right) \\
& (3) \quad \int_{\tau_0 - \rho^2}^{\tau_2} \! \int_{B_{2\rho}} |D^*F|^2 \, dV dt \leq \delta_{\!\rho}^2, \qquad  \quad \int_{\tau_0 - 1}^{\tau_2} \! \int_{B_1} |D^*F|^2 \, dV dt \leq \delta^2.
\end{align*}
Then for $\tau_0 \leq t < \tau_2$ and $\rho \leq r \leq 1,$ there hold:
%\begin{equation}\tag{$1''$}
%r^2 \left( | F| + r |\nabla F| + r^2 |\nabla^{(2)} F| \right) \leq C \sqrt{\epsilon} \left( \begin{split} \left( \frac{\rho}{r} \right)^{2 - \alpha} \left(\left(\frac{\rho}{r} \right)^{\alpha} + w^{3} (r,t - \tau_0) \right) \\
%+ \,\, \kappa_2 r^{2 - \alpha} \left( r^\alpha + w^{\alpha_2}(r,t - \tau_0) \right)
%\end{split}
% \right)
% \end{equation}
\begin{equation}\tag{$1''$}
\begin{split}
& r^2 \left( | F(t)| + r |\nabla F(t)| + r^2 |\nabla^{(2)} F(t)| \right) \leq \\
& \quad \quad C_1 \sqrt{\epsilon} \left(\left( \frac{\rho}{r} \right)^{2 - \alpha} \left(\left(\frac{\rho}{r} \right)^{\alpha} + w^{3} (r,t - \tau_0) \right) 
+ \kappa_2 r^{2 - \alpha} \left( r^\alpha + w^{\alpha}(r,t - \tau_0) \right)
 \right)
 \end{split}
\end{equation}
 \begin{equation}\tag{$2''$}
 \begin{split}
& r^3 \left( | D^* F(t)| + r |\nabla D^*F(t)| \right) \leq \\
& \quad \quad \qquad \qquad C_1 \left( \delta_{\!\rho} \frac{\rho}{r} + \delta \left( \left(\frac{\rho}{r} \right)^{1 - \alpha} w^3(r,t - \tau_0) + r^{3 - \beta} w^\beta(r, t - \tau_0) +  r^3 \right) \right)
\end{split}
 \end{equation}
 \begin{align}
\tag{$3''$} & \int_{\tau_0 + \bar{\tau}}^{\tau_2} \! \int_{B_{2\bar{\rho}}} r^2 |D^*F|^2 \, dV dt \leq C_1 \rho^2 \left(\delta_{\!\rho}^2 \left| \log \rho \right| + \delta^2 | \log \kappa_2 | \right) \\
\tag{$4''$} & \int_{\tau_1}^{\min \LB \tau_1 + 1, \tau_2 \RB} \!\!\!\! \int_{r}^{2r} |F|^2 \, dV dt \leq C_1 \epsilon \left( \left( \frac{\rho}{r} \right)^{4} + \kappa_2^2 r^4 \right). %\quad \left(\forall \,\,\, \rho \leq r \leq 1/2 \right).
\end{align}
Here $C_1$ depends on $\alpha$ and $\beta.$
\end{prop}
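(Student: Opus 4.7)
The plan is to shift the initial time to $\tau_0$ and reduce the statement to an application of Theorem \ref{decaythm}. Using the correspondence (\ref{sizeofcomponents}), the pointwise hypotheses (1)--(2) translate into bounds on the component functions $e(r,t)$ and $r^2 h(r,t)$ of precisely the form required by assumptions (1)--(2) of the theorem. Using Proposition \ref{sizeofcurvprop}(b), the integral hypothesis (3) translates, after comparing $g_1^2 + r^{-2} g_2^2$ to $|D^*F|^2$, into the scale-$\rho$ and scale-$1$ bounds on $\int h^2\,dt$ required in assumption (3) of the theorem. The natural parameter choices are
$$\epsilon_1 = C\epsilon,\ \epsilon_2 = C\kappa_2^2\epsilon,\ \alpha_1 = \alpha_2 = \alpha,\ \delta_1 = \delta_2 = C\delta,\ \delta_3 = C\delta_{\!\rho},\ \delta_4 = C\delta,\ \beta_1 = \alpha,\ \beta_2 = \beta.$$
The constraints $\delta_3^2 \leq \min[\delta_1^2,\epsilon_1]$ and $\delta_4^2 \leq \min[\delta_2^2,\epsilon_2]$ follow from $\delta_{\!\rho} \leq \delta$ and $\delta^2 \leq \kappa_2^2\epsilon$; the key compatibility $\sqrt{\rho}\,\delta_4 \leq \delta_3$ is exactly the hypothesis $\rho\delta^2 \leq \delta_{\!\rho}^2$; and $\beta_2 < 2 - \alpha_2$ follows from $\beta \leq 1 + \alpha$ and $\alpha < 1/2$.

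Conclusions ($1'$) and ($2'$) of Theorem \ref{decaythm} then yield ($1''$) and ($2''$) immediately, since the factors $w^{3-\alpha}(1,t-\tau_0)$ and $w^{3-\beta}(1,t-\tau_0)$ appearing there are bounded by $1$ and may be dropped. Translating $e$ and $r^2 h$ back to pointwise curvature quantities uses (\ref{sizeofcomponents}) together with Proposition \ref{sizeofcurvprop}(a) for the higher-derivative norms.

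For ($3''$), I would split $B_{2\bar\rho} = B_\rho \cup U_\rho^{2\bar\rho}$. On $B_\rho$, the trivial bound $r^2 \leq \rho^2$ combined with hypothesis (3) immediately yields $\rho^2\delta_{\!\rho}^2$. On the annulus, apply Theorem \ref{decaythm}($3'$) with $r_1 = \rho$, $r_2 = 2\bar\rho$, $t_1 = \bar\tau$, $t_2 = \tau_2 - \tau_0$, and use $r^2|D^*F|^2 \leq h^2$. The $\delta_3^2\log(1 + r_2/r_1) = \delta_{\!\rho}^2\log(1 + 2\kappa_2^{-1/4}/\sqrt{\rho})$ contribution produces the $\rho^2\delta_{\!\rho}^2|\log\rho|$ term. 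The scales $\bar\rho = \kappa_2^{-1/4}\sqrt{\rho}$ and $\bar\tau = \kappa_2^{-1/2}\rho^{1-\alpha}$ are designed so that $w^4(\bar\rho,\bar\tau) \sim \rho^{2\alpha}$, which together with the constraints $\kappa_2 \geq \rho^{2(1-\alpha)/3}$ and $\delta^2 \leq \kappa_2^2\epsilon$ makes the remaining contributions (from $\delta_1^2$, $\delta_2^2$, $\delta_4^2$, including the $r_2^{6-2\beta_+}$ factors) collapse into $\rho^2\delta^2|\log\kappa_2|$.

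Finally, ($4''$) follows by squaring ($1''$), dividing by $r^4$, and integrating over $U_r^{2r} \times [\tau_1,\tau_1+1]$ (with $dV \sim r^3\,dr$). The essential weight integrals are
$$\int_a^{a+1} w^6(r,s)\,ds \leq Cr^2,\qquad \int_a^{a+1} w^{2\alpha}(r,s)\,ds \leq Cr^{2\alpha} \qquad (\forall\,a \geq 0),$$
the second via the change of variables $u = s/r^2$. Expanding the square produces four main terms: $(\rho/r)^4$ matches the target directly; $\kappa_2^2 r^{4-2\alpha}\cdot r^{2\alpha} = \kappa_2^2 r^4$ also matches; $(\rho/r)^{4-2\alpha}\cdot r^2$ is absorbed into $(\rho/r)^4 + \kappa_2^2 r^4$ using the constraint $\kappa_2 \geq \rho^{2(1-\alpha)/3}$ (which is exactly sharp for $r$ near $1/2$); and cross terms are handled by AM-GM. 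The main obstacle is not conceptual but combinatorial: with the several interrelated small parameters $(\rho, \delta_{\!\rho}, \delta, \kappa, \kappa_1, \kappa_2, \bar\rho, \bar\tau)$ and the numerous weight factors appearing in ($1'$)--($3'$), verifying that everything collapses to the clean forms ($1''$)--($4''$) demands systematic term-by-term absorption and careful bookkeeping of the $w$-weights at the boundary scales.
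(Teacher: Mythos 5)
Your proposal is correct and follows essentially the same route as the paper: the same substitution of parameters into Theorem~\ref{decaythm} (with $\alpha_1=\alpha_2=\beta_1=\alpha$, $\beta_2=\beta$, $\delta_1=\delta_2=\delta_4=\delta$, $\delta_3=\delta_\rho$, $\epsilon_1=\epsilon$, $\epsilon_2=\kappa_2^2\epsilon$), the same dropping of the $w^{\cdot}(1,t)$ factors to read off $(1'')$--$(2'')$, and the same integration of $(1'')$ together with absorption of the cross term $(\rho/r)^{4-2\alpha}r^2$ via $\kappa_2\geq\rho^{2(1-\alpha)/3}$ for $(4'')$. For $(3'')$ you correctly identify the split $B_{2\bar\rho}=B_\rho\cup U_\rho^{2\bar\rho}$ and the key relation $w^2(\bar\rho,\bar\tau)\leq\rho^\alpha$; the paper's version additionally records the explicit expansion $\log(\bar\rho/\rho)=-\tfrac12\log\rho-\tfrac14\log\kappa_2$ and uses $\rho^{1-\alpha}\leq\kappa_2^{3/2}$ to absorb the remaining $\delta^2$ terms into $\delta^2|\log\kappa_2|$, which is the bookkeeping you flag but do not carry out.
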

\begin{proof} Let %***rev2
$$\alpha_1 = \alpha_2 = \beta_1 = \alpha, \qquad \beta_2 = \beta$$
$$\delta_1 = \delta_2 = \delta_4 = \delta, \qquad \delta_3 = \delta_\rho$$
$$\qquad \qquad \quad \epsilon_1 = \epsilon, \qquad \epsilon_2 = \kappa_2^2 \epsilon.$$
With these choices, ($1$-$3$) imply the corresponding items in Theorem \ref{decaythm} (see \S\ref{sizeofcomponentssection}), and the conclusions ($1''$-$2''$) are equivalent to ($1'$-$2'$).\footnote{Strictly speaking, one should rescale by a factor of 3/4 before applying Theorem \ref{decaythm}.}

The inequality ($3'$) of Theorem \ref{decaythm} above reads 
\begin{equation}\label{priorto3''}
\int_{\tau_0 + \bar{\tau}}^{\tau_2} \! \int_{\rho}^{2\bar{\rho}} h^2 \, dV dt \leq C \left( \rho^2 \delta_{\!\rho}^2  \log \frac{\bar{\rho}}{\rho} + \delta^2 \left(
\rho^{2 - 2 \alpha}\bar{\rho}^{2\alpha} w^{4} \left(\bar{\rho}, \bar{\tau} \right) + \bar{\rho}^{6 - 2\alpha} w^{2 \alpha} \left( \bar{\rho}, \bar{\tau} \right) \right) \right).
\end{equation}
We have
$$\log \frac{\bar{\rho}}{{\rho}} = - \frac{1}{2} \log \rho - \frac{1}{4} \log \kappa_2.$$
Note that $\bar{\tau} = \bar{\rho}^2 \rho^{-\alpha},$ so
$$w^2(\bar{\rho}, \bar{\tau}) = \frac{\bar{\rho}^2}{\bar{\rho}^2 + \bar{\tau}} = \frac{1}{1 + \rho^{-\alpha}} \leq \rho^\alpha.$$
Then %noting that $\beta_+ = (\beta - 1)_+ \leq \alpha,$
(\ref{priorto3''}) reads
\begin{align*}
\int_{\tau_0 + \bar{\tau}}^{\tau_2} \! \int_{\rho}^{2\bar{\rho}} h^2 \, dV dt & \leq C \left( - \delta_{\!\rho}^2  \rho^2 \left(\log \rho + \log \kappa_2 \right) + \delta^2 \left(
\rho^{2 - 2 \alpha}\bar{\rho}^{2\alpha} \rho^{2\alpha} + \bar{\rho}^{6 - 2\alpha} \right) \right) \\
& \leq C \rho^2 \left( \delta_{\!\rho}^2 \left| \log \rho \right| + \delta^2 \left(\frac{\rho^{\alpha}}{\kappa_2^{\alpha / 2}} + \frac{\rho^{1 - \alpha}}{\kappa_2^{(3 - \alpha)/2}} + | \log \kappa_2 | \right) \right).
\end{align*}
Since $\rho^{1 - \alpha} \leq \kappa_2^{3/2},$ this implies ($3''$).

To prove ($4''$), we integrate ($1''$) in time using (\ref{wintegral}-\ref{wintegral2}). This yields
\begin{equation}\label{shorttimeprop4''alternate}
\int_{\tau_1}^{\min \LB \tau_1 + 1, \tau_2 \RB } \!\!\!\! \int_{r}^{2r} e^2 \, dV dt \leq C \epsilon \left( \left( \frac{\rho}{r} \right)^{4} + \left(\frac{\rho}{r} \right)^{4 - 2\alpha}r^2 + \kappa_2^2 r^4 \right) \quad \left(\forall \,\,\, \rho \leq r \leq 1/2 \right).
\end{equation}
Note that
\begin{align*}
\left(\frac{\rho}{r} \right)^{4 - 2\alpha}r^2 \leq \left( \frac{\rho}{r} \right)^3 r^2 & = \left( \frac{\rho}{r} \right)^{5/2} \frac{\rho^{1/2}}{\kappa_2^{3/4}} \kappa_2^{3/4} r^{3/2}.
\end{align*}
Since by assumption
$$\frac{\rho^{1/2}}{\kappa_2^{3/4}} \leq \frac{\rho^{1-\alpha}}{\kappa_2^{3/2}} \leq 1$$
we have
\begin{equation*}
\left(\frac{\rho}{r} \right)^{4 - 2\alpha}r^2 \leq \left( \frac{\rho}{r} \right)^{5/2} \kappa_2^{3/4} r^{3/2} = \left( \frac{\rho}{r} \right)^{5/2} \left( \kappa_2 r^2 \right)^{3/4}.
\end{equation*}
Applying Young's inequality, we obtain
\begin{equation*}
\left(\frac{\rho}{r} \right)^{4 - 2\alpha}r^2 \leq \frac{5}{8} \left( \frac{\rho}{r} \right)^{4} + \frac{3}{8} \kappa_2^2 r^4.
\end{equation*}
Hence (\ref{shorttimeprop4''alternate}) implies ($4''$).
\end{proof}

\begin{prop}\label{taubarprop} Let
$$\alpha = 1/4, \qquad \beta = 5/4$$
$$0 < \kappa \leq \kappa_0, \qquad \kappa_1 = \kappa \sqrt{\frac{\epsilon}{E}}, \qquad \kappa_2 = \kappa \kappa_1^2$$
$$0 < \mu \leq 1, \qquad \rho \leq \kappa_2 \mu^3$$
$$\bar{\tau} = \kappa_2^{-1/2} \rho^{3/4}.$$
Let $\tau_0, \tau_1, \tau_2 \geq 0$ be three times satisfying
\begin{align}
\label{rescalednottoosmall} & \tau_0 + \bar{\tau} \leq \tau_1 \leq \tau_2 \leq \tau_1 + 1.
\end{align}
%$$0 < \kappa \leq \kappa_0 \qquad \kappa_1 = \kappa \sqrt{\frac{\epsilon}{E}} \qquad \kappa_2 = \kappa \kappa_1^2$$
%$$\kappa_1 = \kappa \sqrt{\frac{\epsilon}{E}} \quad \quad \kappa_2 = \kappa \kappa_1^2 \qquad \bar{\tau} = \kappa_2^{-1/2} \rho^{1 - \alpha}.$$

Assume that $A(t)$ satisfies (\ref{basicassumptions1}-\ref{basicassumptions2}), with $\delta_0$ depending on $E, \epsilon,\kappa,$ and $\mu,$ as well as (1-3) of Proposition \ref{shorttimeprop}.
%***rev2
Suppose further that $\lambda(\tau_0) = \rho,$ and the curvature scales at $\tau_1$ and $\tau_2$ satisfy
\begin{equation}\label{lambdataurho}
\lambda(\tau_1) \geq \mu \rho
\end{equation}
and
\begin{align}
\label{lambdatau12} \lambda(\tau_2) \leq \frac{\kappa_1}{ \lambda_0 } \lambda(\tau_1).
\end{align}
%\begin{align}
%\label{lambdatau12} \lambda(\tau_0) = \rho & \lambda(\tau_1) \geq \lambda_0 \kappa_1 \rho \qquad \mbox{and} \qquad \lambda(\tau_2) \leq \frac{\kappa_1}{ \lambda_0 } \lambda(\tau_1).
%\end{align}
Then
\begin{equation}\label{harmonicbound}
\int_{\tau_0 - \rho^2}^{\tau_2} \! \int_{B_{2\rho}} |D^*F|^2 \, dV dt > \frac{c \mu^2 \epsilon}{|\log \rho \, |}.
\end{equation}
\end{prop}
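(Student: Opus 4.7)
The plan is to apply the weighted energy identity (\ref{mainenidentity}) on $[\tau_1,\tau_2]$ with the cutoff $\varphi_{\bar\rho}$, realizing the heuristic of \S\ref{explanatoryremark}. Rearranging,
\[
2\int_{\tau_1}^{\tau_2}\!\!\int |D^*F|^2\varphi_{\bar\rho}r^2\,dV\,dt = \int |F(\tau_1)|^2\varphi_{\bar\rho}r^2\,dV - \int |F(\tau_2)|^2\varphi_{\bar\rho}r^2\,dV + S(\bar\rho,\tau_1,\tau_2).
\]
The aim is to bound the right-hand side below by $c\lambda(\tau_1)^2\epsilon$ and combine with $(3'')$ of Proposition \ref{shorttimeprop} to extract (\ref{harmonicbound}).

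For $\int|F(\tau_1)|^2\varphi_{\bar\rho}r^2$, the minimality and left-continuity of $\lambda(\tau_1)\in[\lambda_0\kappa_1\rho,\rho]$---together with Lemma \ref{annularantibubble}$a$ to absorb an $O(\delta)$ error (dictating the dependence of $\delta_0$ on $\epsilon$ and $\kappa$)---yields $\epsilon$-concentration of $|F(\tau_1)|^2$ in $U_{\lambda(\tau_1)/2}^{\lambda(\tau_1)}$. Since this annulus lies well inside $B_{\bar\rho}$ (because $\rho\leq\kappa_2^2\ll\bar\rho^2$), multiplying by $r^2\geq(\lambda(\tau_1)/2)^2$ gives $\int|F(\tau_1)|^2\varphi_{\bar\rho}r^2\geq c\lambda(\tau_1)^2\epsilon$. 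For the $\tau_2$-term I would split $B_{2\bar\rho}$ into the bubble region $B_{\lambda(\tau_2)}$, whose contribution is at most $\lambda(\tau_2)^2E\leq(\kappa/\lambda_0)^2\lambda(\tau_1)^2\epsilon$ using the key identity $\kappa_1^2E=\kappa^2\epsilon$, and its complement. On the complement one invokes Corollary \ref{decaycor}$(a)$ (applied on a rescaled neighborhood of $\tau_2$ at scale $\lambda(\tau_2)$) to obtain Uhlenbeck-type decay $|F(\tau_2)|\leq C\sqrt{\epsilon}\,\lambda(\tau_2)^2/r^4$ for $\lambda(\tau_2)\leq r\leq 2\bar\rho$; a dyadic summation with the $r^2$ weight then controls this annular integral by $C\epsilon\lambda(\tau_2)^2\leq(\kappa/\lambda_0)^2\lambda(\tau_1)^2\epsilon$. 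The boundary term $S(\bar\rho,\tau_1,\tau_2)$ is dispatched through (\ref{obviousbound}) together with $(1'')$ at $r=\bar\rho$, making $|S|\ll\lambda(\tau_1)^2\epsilon$. Choosing $\kappa\leq\kappa_0$ sufficiently small (which fixes the universal $\kappa_0$), all auxiliary contributions are $\leq\tfrac14\lambda(\tau_1)^2\epsilon$, so the weighted dissipation integral is $\geq c\lambda(\tau_1)^2\epsilon\geq c\lambda_0^2\kappa_1^2\rho^2\epsilon$.

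The matching upper bound is supplied by $(3'')$ of Proposition \ref{shorttimeprop}, valid because $\tau_1\geq\tau_0+\bar\tau$ by (\ref{rescalednottoosmall}):
\[
\int_{\tau_1}^{\tau_2}\!\!\int_{B_{2\bar\rho}} r^2|D^*F|^2\,dV\,dt\leq C_1\rho^2\bigl(\delta_{\rho}^2|\log\rho|+\delta^2|\log\kappa_2|\bigr).
\]
The $\delta^2|\log\kappa_2|$ term is absorbed using $\delta^2\leq\kappa_2^2\epsilon=\kappa^2\kappa_1^4\epsilon$ for $\kappa$ small, so that it is a fraction of $c\kappa_1^2\epsilon$. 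Dividing the combined inequality by $\rho^2|\log\rho|$ yields $\delta_{\rho}^2\geq c\kappa_1^2\epsilon/|\log\rho|$. Since hypothesis (3) of Proposition \ref{shorttimeprop} permits $\delta_{\rho}^2$ to coincide with the actual integral $\int_{\tau_0-\rho^2}^{\tau_2}\!\int_{B_{2\rho}}|D^*F|^2$, choosing this tightest value delivers (\ref{harmonicbound}).

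The main obstacle will be the upgrade from the raw definition of $\lambda(\tau_2)$---which alone only furnishes $\int_{r/2}^r|F(\tau_2)|^2<\epsilon$ per dyadic annulus, summing against the $r^2$ weight over $U_{\lambda(\tau_2)}^\rho$ to $\sim\rho^2\epsilon$, larger than the target lower bound by a factor of $\kappa_1^{-2}$---to the pointwise Uhlenbeck decay from scale $\lambda(\tau_2)$ furnished by Corollary \ref{decaycor}, which in turn rests on the full machinery of \S\ref{splitevolutionsection}--\ref{decaysection}. Once this sharper decay is in hand, the intermediate region is rendered harmless and the scheme of \S\ref{explanatoryremark} closes.
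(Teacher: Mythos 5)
Your overall plan coincides with the paper's: apply (\ref{mainenidentity}) with cutoff at $\bar\rho$, lower-bound the $\tau_1$-term by $\epsilon$-concentration at scale $\lambda(\tau_1)$, upper-bound $|S(\bar\rho,\tau_1,\tau_2)|$ via (\ref{obviousbound}) together with the decay from Proposition \ref{shorttimeprop}, upper-bound the $\tau_2$-term by splitting at $\lambda(\tau_2)$ and invoking Uhlenbeck decay outside the bubble, and close with ($3''$). The identity $\kappa_1^2E=\kappa^2\epsilon$ for the bubble term, and the treatment of the $\delta^2|\log\kappa_2|$ error, are exactly as in the paper. The paper uses ($4''$) rather than ($1''$) to control $|S|$, but ($4''$) is the time-integrated packaging of ($1''$), so this is not a substantive difference; likewise the paper uses Lemma \ref{decaylemma} where you cite Corollary \ref{decaycor}$(a)$, which is a refinement of the same statement.

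However, there is a genuine gap in your treatment of $\int_{B_{2\bar\rho}\setminus B_{\lambda(\tau_2)}}|F(\tau_2)|^2\,r^2\,dV$. You cite the Uhlenbeck decay in the form $|F(\tau_2)|\lesssim\sqrt\epsilon\,\lambda(\tau_2)^2/r^4$ and claim a dyadic sum against $r^2$ gives $\lesssim\epsilon\lambda(\tau_2)^2$. But the decay estimate from Lemma \ref{decaylemma} or Corollary \ref{decaycor}$(a)$ always carries a \emph{growing} tail: in rescaled coordinates one gets $e(r,1)\lesssim\sqrt\epsilon\bigl((\lambda/r)^2+r^2\bigr)$, so in the original coordinates with outer radius $R$ for the annulus on which the estimate is applied, $|F(\tau_2)|\lesssim\sqrt\epsilon\bigl(\lambda(\tau_2)^2/r^4+R^{-2}\bigr)$. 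Integrating the growing piece against $r^2\,dV$ over $B_{2\bar\rho}$ gives $\sim\epsilon\,\bar\rho^6/R^4$, which with the natural choice $R\sim 2\bar\rho$ is $\sim\epsilon\,\bar\rho^2\sim\epsilon\,\kappa_2^{-1/2}\rho$. Comparing against the target $\epsilon\lambda(\tau_1)^2\gtrsim\epsilon\lambda_0^2\kappa_1^2\rho^2$, the ratio blows up as $\rho\to 0$; a quick computation shows that no admissible choice of $R\leq 1$ makes this term small uniformly in $\rho$. The paper resolves this by introducing a third region: Uhlenbeck decay is applied only on $\bigl[2\lambda(\tau_2),\rho_1\bigr]$ with $\rho_1=\kappa_1^{-3/2}\rho$ (so the growing tail at $r=\rho_1$ is still $O(\kappa_1)$-small), while on $\bigl[\rho_1,2\bar\rho\bigr]$ one instead uses hypothesis (1) of Proposition \ref{shorttimeprop}, whose outer piece already carries the $\kappa_2$-damping $\kappa_2 r^{2-\alpha}$. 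That extra step is necessary to close the $\tau_2$-term estimate, and is the one ingredient missing from your write-up.
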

\begin{proof} Write
$$\rho = \lambda(\tau_0), \qquad \lambda_1 =  \lambda(\tau_1), \qquad \lambda_2 = \lambda(\tau_2)$$
$$ \bar{\rho} = \kappa_2^{-1/4} \sqrt{\rho}.$$
By advancing $\tau_1,$ we may assume without loss that (\ref{lambdacondition}) is satisfied at $\tau = \tau_1.$ % and, by Lemma \ref{basecaselemma}, that
%\begin{equation}
%\lambda_0 \kappa_1 \rho \leq \lambda_1 \leq \kappa_1 \rho.
%\end{equation}
Then
$$\int_{\lambda_1/2}^{\lambda_1} |F(\tau_1)|^2 \, dV = \epsilon$$
and
\begin{equation}\label{tau1largeness}
\frac{\epsilon \lambda_1^2}{4} \leq \int_{B_{2\rho}} | F(\tau_1) |^2 r^2 dV.
\end{equation}

Note from ($4''$) and (\ref{obviousbound}) that
\begin{align*}\label{Smidwaybound}
\nonumber \left| S(\bar{\rho}, \tau_1, \tau_2) \right| & \leq C \int_{\tau_1}^{\tau_2} \!\!\!\! \int_{\bar{\rho}}^{2\bar{\rho}} |F|^2 \, dV dt \\
& \leq C \epsilon \left( \kappa_2 \rho^4 / \rho^2 + \kappa_2^2 \rho^2 / \kappa_2 \right) \\
\nonumber & \leq C \kappa_2 \epsilon \rho^2
\end{align*}
where we have also used (\ref{rescalednottoosmall}).
Provided $C \kappa \leq \lambda_0^2 / 10,$ we have
$$C \kappa_2 = C \kappa \kappa_1^2 \leq \frac{\left( \lambda_0 \kappa_1 \right)^2}{10}$$
and, from (\ref{lambdataurho})
\begin{equation}\label{Nsmallness}
\left| S(\bar{\rho}, \tau_1, \tau_2) \right| \leq \frac{\epsilon \lambda_1^2}{10}.
\end{equation}

Next, Lemma \ref{basecaselemma} asserts that $\lambda(\tau)$ can jump at most by $\lambda_0$ at a discontinuity. We may therefore assume, without loss of generality, that
$$\kappa_1 \lambda_1 \leq \lambda_2 \leq \frac{\kappa_1 \lambda_1}{\lambda_0}.$$
For $\delta$ sufficiently small (depending only on $\epsilon,\kappa_1,$ and $\mu$), Lemma \ref{annularantibubble}$a$ implies
$$\int_{r}^{2r} |F(t)|^2 \, dV < 2 \epsilon$$
$$\forall \,\,\, \tau_2 - \lambda_2^2 \leq t \leq \tau_2, \qquad 2 \lambda_2 \leq r \leq \frac{\rho}{\kappa_1 \mu^{5/2} }.$$
We may therefore apply Lemma \ref{decaylemma}, % on $U_{\kappa_1^2 \rho}^{\kappa_1^{-2} \rho} \times \LB \tau_2 - (\kappa_1^2 \rho)^2, \tau_2 \RB,$
to obtain
\begin{equation}\label{immediatedecay}
r^2 |F(\tau_2)| \leq C \sqrt{\epsilon} \left( \frac{\lambda_2^2}{r^2} + \frac{\kappa_1^2 \mu^{5} r^2}{\rho^2} \right). % \leq C \sqrt{\epsilon} \left( \frac{\left( \kappa_1 \lambda_1 \right)^2}{r^2} + \mu^3 \frac{r^2}{\rho^2} \right).
\end{equation}
Let
$\rho_1 = \rho / \kappa_1^{1/3} \mu^{4/3} .$ Combining (\ref{immediatedecay}) with ($1$) above, we obtain
\begin{align*}
\nonumber \int_{B_{2\bar{\rho}}} |F(\tau_2)|^2 & r^2 dV  \leq \left( \int_{B_{2 \lambda_2}} \!\!\! + \int_{2 \lambda_2}^{\rho_1} + \int_{\rho_1}^{2\bar{\rho}}\right) |F(\tau_2)|^2 \, r^2 dV \\
\nonumber & \leq 4 E \lambda_2^2 + C \epsilon \left( \int_{2 \lambda_2}^{\rho_1} \left( \frac{\lambda_2^4}{r^2} + \kappa_1^4 \mu^{10} \frac{r^{6}}{\rho^{4}} \right) \frac{dr}{r} + \!\! \int_{\rho_1}^{2\bar{\rho}} \left( \frac{\rho^{4 - 2\alpha}}{r^{2 - 2\alpha} } + \kappa_2^2 r^{6 - 2\alpha} \right) \, \frac{dr}{r} \right) \\
& \leq 4 \lambda_0^{-2} \kappa^2 \epsilon \lambda_1^2 + C \epsilon \left( \kappa_1^2 \lambda_1^2 + \kappa_1^4 \mu^{10} \rho^2 \left( \frac{\rho_1}{\rho} \right)^6  + \rho^2 \left( \left( \frac{\rho}{\rho_1} \right)^{3/2} + \kappa_2^{5/8} \rho^{3/4} \right) \right) \\
& \leq C \kappa^2 \epsilon \lambda_1^2 + C \epsilon \left( \kappa_1^2 \mu^2 \rho^2  + \rho^2 \left( \kappa_1^{1/2} \mu^2 + \kappa_2 \mu^2 \right) \right).
\end{align*}
Using (\ref{lambdataurho}), this simplifies to
\begin{align*}
\nonumber \int_{B_{2\bar{\rho}}} |F(\tau_2)|^2 r^2 dV & \leq C \epsilon \lambda_1^2 \left(\kappa^2 + \kappa^{1/2} + \kappa^{3} \right).
\end{align*}
Assuming $C \sqrt{\kappa } \leq 1/10,$ we then have
\begin{equation}\label{ballsmallness}
\int_{B_{2\bar{\rho}}} |F(\tau_2)|^2 \, r^2 dV \leq \frac{\epsilon \lambda_1^2}{10}.
\end{equation}

Finally, we apply the weighted energy identity (\ref{mainenidentity}), with cutoff at $\bar{\rho}:$
\begin{equation*}
\int | F(\tau_1) |^2 \varphi_{\bar{\rho}} r^2 dV + S({\bar{\rho}} , \tau_1, \tau_2) = \int | F(\tau_2) |^2 \varphi_{\bar{\rho}} r^2 dV + 2 \int_{\tau_1}^{\tau_2} \!\!\!\! \int | D^*F |^2 \varphi_{\bar{\rho}} r^2 dV dt.
\end{equation*}
Inserting (\ref{tau1largeness}), (\ref{Nsmallness}), (\ref{ballsmallness}), %***rev2
we have
\begin{align*}
\nonumber \epsilon \lambda_1^2 \left( \frac{1}{4} - \frac{1}{10} - \frac{1}{10} \right) & \leq 2 \int_{\tau_1}^{\tau_2} \!\!\!\! \int_{B_{2\bar{\rho}}} | D^*F |^2 r^2 dV dt.
\end{align*}
Let
\begin{equation}\label{deltabarrhodef}
\bar{\delta}_\rho^2 = \max \LB \rho \delta^2, \int_{\tau_0 - \rho^2}^{\tau_2} \! \int_{B_{2\rho}} | D^*F |^2 dV dt \RB.
\end{equation}
In view of (\ref{rescalednottoosmall}), we may apply ($3''$) above, to further obtain
\begin{align*}
\label{needtodivide} \rho^2 \left( \frac{\mu^2}{20} \epsilon - C \delta^2 |\log \kappa_2| \right) & \leq C \bar{\delta}_{\!\rho}^2 \rho^2 | \log \rho |.
\end{align*}
For $\delta$ sufficiently small, depending on $E,\epsilon,\kappa,$ and $\mu,$ this simplifies to
\begin{equation}
\label{wevedivided} \frac{\mu^2 \epsilon}{C |\log \rho \, |} < \bar{\delta}_{\!\rho}^2.
\end{equation}
Then, we must have $\bar{\delta}_\rho^2 = \int_{\tau_0 - \rho^2}^{\tau_2} \! \int_{B_{2\rho}} | D^*F |^2 dV dt$ in (\ref{deltabarrhodef}); hence (\ref{wevedivided}) implies (\ref{harmonicbound}), as desired.
\end{proof}

\begin{thm}\label{technicaltheorem} %Given $E \geq 1$ and $0 < \epsilon < \epsilon_0,$ there exist $\lambda > 0$ and $\delta > 0$ as follows.
Assume that $A(t)$ satisfies (\ref{basicassumptions1}-\ref{basicassumptions3}), with $\delta_0 > 0$ depending on $E, \epsilon$ and $\bar{\lambda}.$ Let
$$\delta_\tau = \sqrt{ \int_{\tau - 1}^{\tau + 1} \! \int_{B_{2 \lambda(\tau)}} |D^*F|^2 \, dV dt }, \quad \quad \bar{\delta}_\tau = \sup \left\{ \sqrt{\lambda(\tau)} \, \delta, \delta_{\tau'} \right\}_{\tau \leq \tau' < T - 1}.$$ % rev3
For all
$$0 \leq \tau + 1 \leq t < T, \quad \quad \lambda(\tau) \leq r \leq 1/2, \quad \quad k \in \N$$
there hold
\begin{align}
\label{technicaltheoremassumptions} r^{2 + k} \left| \nabla^{(k)} F (t) \right| & \leq
C_{k} \sqrt{\epsilon} \left( \left( \frac{\lambda(\tau)}{r} \right)^2 + r^2 \right) \\
\label{technicaltheoremassumptions2} r^{3 + k} \left| \nabla^{(k)} D^* F (t) \right| & \leq C_{k} \left( \bar{\delta}_\tau \frac{\lambda(\tau) }{r} + \delta r^3 \right)
\end{align}
\begin{equation}\label{technicaltheoremmainestimate}
\lambda \left( \tau + 1 \right) > \kappa_1 \lambda(\tau)^{1 + K_1 \delta_\tau^2}.
\end{equation}
%\begin{equation}\label{technicaltheoremmainestimate}
%\lambda \left( \tau + \kappa_2 \right) > \kappa_1 \lambda(\tau)^{1 + K_1 \delta_\tau^2}.
%\end{equation}
Here $K_1, \kappa_1 > 0$ depend on $E$ and $\epsilon.$
\end{thm}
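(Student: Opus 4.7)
The plan has two largely independent parts: establishing the pointwise decay estimates \eqref{technicaltheoremassumptions}--\eqref{technicaltheoremassumptions2}, and then proving the logarithmic scale bound \eqref{technicaltheoremmainestimate}.

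For the decay estimates, I would reduce directly to Corollary \ref{decaycor}. After translating time so that $\tau=0$, Definition \ref{lambdadef} together with Lemma \ref{annularantibubble}$a$ (with $\delta_0$ small enough) supplies the annular smallness hypothesis \eqref{decaycorassumption} at scale $\lambda(\tau)$. The two dissipation bounds of Corollary \ref{decaycor} follow from \eqref{basicassumptions2}, with $\delta_\lambda$ identified as $\delta_\tau$ and $\bar{\delta}_\tau$ absorbing the corollary's auxiliary condition $\sqrt{\lambda}\delta \leq \delta_\lambda$. Corollary \ref{decaycor}$(a)$--$(b)$ then delivers the $k=0$ cases of \eqref{technicaltheoremassumptions}--\eqref{technicaltheoremassumptions2}, and the higher-derivative versions follow by applying Proposition \ref{epsilonreg}$(a)$--$(b)$ at each dyadic subscale $r \in [\lambda(\tau),1/2]$.

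For the main scale estimate, I would iterate Proposition \ref{taubarprop}. Build a sequence of times $\sigma_0 = \tau < \sigma_1 < \cdots < \sigma_N \leq \tau+1$ together with scales $\rho_i = \lambda(\sigma_i)$ by choosing $\sigma_{i+1}$ to be the first time after $\sigma_i$ at which the curvature scale has dropped from $\rho_i$ to roughly $\kappa_1 \rho_i$, so that
\begin{equation*}
\lambda_0 \kappa_1 \rho_i \leq \rho_{i+1} \leq \lambda_0^{-1} \kappa_1 \rho_i.
\end{equation*}
Lemma \ref{basecaselemma} guarantees that each such drop requires a quantum of time, so the iteration terminates at some finite $N$. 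At each stage I would apply Proposition \ref{taubarprop} with $\tau_0 = \sigma_i$, $\tau_2 = \sigma_{i+1}$, $\rho = \rho_i$, verifying hypotheses $(1)$--$(3)$ of Proposition \ref{shorttimeprop} inductively from the decay estimates of the previous step (suitably rescaled). This yields
\begin{equation*}
\int_{\sigma_i - \rho_i^2}^{\sigma_{i+1}} \int_{B_{2\rho_i}} |D^*F|^2 \, dV\, dt > \frac{c \kappa_1^2 \epsilon}{|\log \rho_i|}.
\end{equation*}
Since the balls $B_{2\rho_i}$ are nested in $B_{2\lambda(\tau)}$ and the intervals $[\sigma_i - \rho_i^2, \sigma_{i+1}]$ lie essentially disjointly in $[\tau - \lambda(\tau)^2, \tau+1]$ (the $\rho_i^2$ overhangs being absorbable, using the strict inequality in Proposition \ref{taubarprop}), summation gives
\begin{equation*}
\delta_\tau^2 \geq \sum_{i=0}^{N-1} \frac{c \kappa_1^2 \epsilon}{|\log \rho_i|} \geq \frac{c \kappa_1^2 \epsilon\, N}{|\log \lambda(\tau)| + N |\log \kappa_1|}.
\end{equation*}
Solving for $N$ yields $N \leq K_1 \delta_\tau^2 |\log \lambda(\tau)|$ (after absorbing a small $\delta_\tau$ contribution on the left), and then
\begin{equation*}
|\log \lambda(\tau+1)| \leq |\log \rho_N| \leq |\log \lambda(\tau)| + N |\log \kappa_1| \leq |\log \kappa_1^{-1}| + (1 + K_1 \delta_\tau^2) |\log \lambda(\tau)|,
\end{equation*}
which is \eqref{technicaltheoremmainestimate} upon exponentiating.

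The hard part will be the inductive verification of the hypotheses of Proposition \ref{shorttimeprop} along the iteration. The conclusions $(1'')$--$(2'')$ at scale $\rho_i$ must be repackaged as hypotheses $(1)$--$(2)$ at scale $\rho_{i+1}$, which forces a careful choice of the parameters $\alpha = 1/4$, $\beta = 5/4$ and a consistent tracking of how $\kappa_2$ and the local dissipation $\delta_\rho^2$ evolve under rescaling; in particular, the smallness constraint $\delta^2 \leq \kappa_2^2 \epsilon$ must be preserved at every stage, which is ultimately what forces the overall smallness assumption on $\delta_0$ to depend on $E$, $\epsilon$, and $\bar{\lambda}$.
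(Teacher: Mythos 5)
Your high-level strategy — iterate Proposition \ref{taubarprop} and sum the resulting dissipation bounds $\delta_i^2 \gtrsim \epsilon / |\log\rho_i|$ — is directionally the right idea, and it is the heuristic the paper advertises in \S\ref{explanatoryremark}. But the proposal as written has two genuine gaps, both of which the paper flags explicitly in Remark \ref{technicalremark} and which force the much more intricate inductive structure of the actual proof.

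\textbf{First gap: the decay estimates cannot be decoupled from the scale bound.} You propose to obtain \eqref{technicaltheoremassumptions}--\eqref{technicaltheoremassumptions2} ``directly from Corollary \ref{decaycor}.'' But Corollary \ref{decaycor} requires $\delta < \delta_0(\epsilon,\lambda)$, and its $\delta_0$ deteriorates as $\lambda \to 0$. In Theorem \ref{technicaltheorem}, $\delta_0$ is required to depend only on $E,\epsilon,\bar\lambda$ — not on the current (possibly tiny) curvature scale $\lambda(\tau)$. So the corollary cannot simply be invoked at small scales. Moreover, to push a decay estimate from one unit interval to the next, one needs to know that $\lambda(\tau)$ has not collapsed by more than an exponential factor in the interim — i.e., one needs exactly \eqref{technicaltheoremmainestimate}. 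In the paper's proof, hypotheses $(a)$--$(c)$ are established \emph{together} by a descending induction on curvature scale: $(c)$ gives the a priori lower bound on $\lambda$ needed to verify the hypotheses of Proposition \ref{shorttimeprop} (which supplies $(a)$--$(b)$), and conversely Proposition \ref{taubarprop} (which supplies $(c)$) needs the decay hypotheses $(1)$--$(3)$. Your two ``largely independent parts'' are in fact mutually dependent, and breaking that dependence is the content of the induction.

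\textbf{Second gap: the time hypothesis \eqref{rescalednottoosmall} may fail.} In building your sequence $\sigma_0 < \sigma_1 < \cdots$ and applying Proposition \ref{taubarprop} at every step, you must verify its time-gap hypothesis $\tau_0 + \bar\tau \leq \tau_1$ (i.e.\ \eqref{rescalednottoosmall}, or its rescaled form \eqref{sigmaeq}). You don't address this, and it is \emph{not} always satisfied: as $\lambda(\tau)$ drops rapidly the times $\tau_i$ may accumulate, so that $\sigma_{i+1} - \sigma_i$ is too short for the decay estimates to take effect at scale $\rho_i$. This is precisely the obstacle the paper calls the ``main technical'' difficulty. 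The paper's resolution is the $J$ / $J^c$ decomposition: it isolates the set $J$ of indices where \eqref{sigmaeq} holds, shows via \eqref{sigmaprecalc} that a string of bad indices $\{k_1+1,\dots,k_2-1\}\subset J^c$ forces $\rho_{k_1}$ to be dramatically larger than $\rho_{k_2}$, and hence (via Proposition \ref{taubarprop} at index $k_1$) the single good term $\delta_{i_j}^2$ pays for the entire gap, yielding \eqref{sigmaclaim}: $\delta_{i_j}^2 \gtrsim (i_{j+1}-i_j)/L$. Without this bookkeeping your sum $\sum_i \delta_i^2$ has no lower bound, since most terms may be uncontrolled. A separate argument (the $N_0 = \lfloor \tfrac{4}{5}N\rfloor$ claim) is also needed to show $J$ is nonempty at all. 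These are the heart of the proof, not a verification to be deferred.

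Finally, a cosmetic point: the paper's summation does not use the varying denominator $|\log\lambda(\tau)| + i|\log\kappa_1|$ you propose; it restricts to $N \lesssim \gamma L$ drops (so $|\log\rho_i| \leq (1+\gamma)L$ uniformly, by \eqref{rhoibasicbounds}) and uses the induction hypothesis $(c)$ to rule out larger $N$. Your harmonic-sum bound is a reasonable alternative computation, but it only becomes available once the two gaps above are filled.
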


\begin{rmk}\label{technicalremark} %It should be worth explaining how the idealized argument described in Section \ref{explanatoryremark} is related to the proof which we are about to assemble.
Before giving the proof of Theorem \ref{technicaltheorem}, it is worth pointing out the main technical difficulty which was omitted from the sketch in \S \ref{explanatoryremark}. While the curvature scale $\lambda(\tau)$ may be decreasing very rapidly in a potential finite-time blowup, the decay estimates require a minimal waiting time, called $\bar{\tau}$ in Propositions \ref{shorttimeprop}-\ref{taubarprop}, before taking effect. %This is problematic, since it is necessary to divide by $\lambda^2$ in order to derive the bound of (\ref{explanatoryharmonicbound}) ().
In the proof below, this will require us to carry out a bootstrap argument in which the decay estimates allow for sufficient control of $\lambda(\tau)$ and vice-versa.

The key step of the argument, beginning with (\ref{technicaltheoremcontradiction}), lies in establishing (\ref{technicaltheoremmainestimate}) from the other bootstrap assumptions. If the hypotheses of Proposition \ref{taubarprop} were always satisfied, then (\ref{technicaltheoremmainestimate}) would follow immediately from the standard proof of divergence of the harmonic series, in keeping with \S \ref{explanatoryremark}.
However, one cannot assume that (\ref{rescalednottoosmall}) is satisfied in general; indeed, 
the times $\tau_i$ may well be accumulating in such a way that (\ref{rescalednottoosmall}) fails for almost all terms in the series.

Fortunately, \emph{time is on our side}. In the event that (\ref{rescalednottoosmall}), or rather its time-rescaled version (\ref{sigmaeq}) below, is violated over a string of time intervals---the set of such intervals is called $J^c$ in the proof---the interval just beforehand, on which (\ref{sigmaeq}) \emph{is} satisfied, must be extremely short. After rescaling, Proposition \ref{taubarprop} will imply that enough energy has already been spent on this interval to cover the whole gap---see (\ref{sigmaclaim}) below.
%The bootstrap argument that follows differs from the argument of section \ref{}. We will obtain a contradiction using a variation on the well-known proof of divergence of the harmonic series, as follows. Since the length of these intervals may also be tending to zero in an arbitrary fashion, in fact we cannot use all of the time intervals.

%The realistic version of the proof involves time, . Note that the lower bound (\ref{explanatoryharmonicbound}) depends on comparing two vanishingly small quantities over the time interval $\LB \tau_i, \tau_{i + 1} \RB.$ Indeed, Proposition \ref{} establishes the energy ``tariff'' (), which is exactly of the form (\ref{}), but only under the assumption (), stating that the time interval $\tau_i - \tau_{i-1}$ is not too much smaller than $\tau_{i + 1} - \tau_i.$ This causes a further difficulty, discussed in Remark \ref{} below.

%There is also an apparent paradox of needing to bound. This is because we can only extract the energy ``tariff'' \emph{at the small radius $\lambda.$ } This is why detailed parabolic versions of (\ref{uhlenbeckdecay}) and (\ref{explanatorysharpdstarfbound}), as well as several further refinements, are necessary.
\end{rmk}

\begin{proof}[Proof of Theorem \ref{technicaltheorem}] We proceed by descending induction on the curvature scale. % $\lambda(\tau).$

As a base case, the Theorem may be established for all solutions $A(t)$ which, in addition to the stated hypotheses, satisfy
\begin{equation}\label{technicaltheorembasecasetime}
\bar{\lambda} \leq \lambda(\tau) \leq \lambda_0.
\end{equation}
Since $\kappa_1 \leq \kappa_0 \leq \lambda_0,$ and
$$\delta \leq \frac{\epsilon \lambda_0 \bar{\lambda} }{\sqrt{E}}$$
%(\ref{technicaltheorembasecasetime}) holds true for all $0 \leq \tau < \min \LB 1,T \RB,$
the estimate (\ref{technicaltheoremmainestimate}) follows from Lemma \ref{basecaselemma}. The estimates (\ref{technicaltheoremassumptions}-\ref{technicaltheoremassumptions2}) follow from Corollary \ref{decaycor} and Lemma \ref{annularantibubble}$b.$ %***rev2
This completes the base case. %for an arbitrary fixed $\bar{\lambda} > 0.$

We now wish to establish the Theorem for all $0 \leq \lambda(\tau) \leq \bar{\lambda},$ and a fixed $\delta_0 > 0.$ By the base case, we have the freedom to assume that $\bar{\lambda}$ is arbitrarily small, which we shall use repeatedly. %In particular, this allows us to assume that all of the technical smallness assumptions (\ref{shorttimepropsmallness}) of Proposition \ref{shorttimeprop} are satisfied. %choosing $\lambda_0 = \lambda$ in the statement, and $\delta$ arbitrarily small.
%By Proposition \ref{decaylemma}, we may achieve $\epsilon_2 = C \lambda_2^2$ arbitrarily small, and
%$$\int_{\lambda_0^{1/2}}^{2\lambda_0^{1/2}} |F|^2 \, dV \leq \epsilon_2.$$

Let
\begin{equation}\label{k1defn}
\kappa_1 = \kappa_0 \sqrt{\frac{\epsilon}{E}}, \quad \quad \kappa_2 = \kappa_0 \kappa_1^2, \qquad K_1 = \frac{3}{c \kappa_1^4 \epsilon}.
\end{equation}
Here $\kappa_0$ is the universal constant determined above, and $c$ is a universal constant to be determined below.

Before proceeding to the induction step, we rescale by a factor $\hat{\rho} = \sqrt{\kappa_2},$ and make the following alternative set of hypotheses:
$$\forall \,\,\, 0 \leq \tau \leq t < T, \qquad2\lambda(\tau) \leq r \leq 1/2$$
%\begin{equation}\tag{$a$}\label{technicaltheorema}
%e(r,t) \leq C \sqrt{\epsilon} \left(\left( \frac{\lambda(\tau)}{r} \right)^{\frac{7}{4}} \left(\left(\frac{\lambda(\tau)}{r} \right)^{\frac{1}{4}} + w^{3} (r,t - \tau) \right) 
%+ \kappa_2 r^{\frac{7}{4}} \left( r^{\frac{1}{4}} + w^{\frac{1}{4}}(r,t - \tau) \right)
% \right)
%\end{equation}
% \begin{equation}\tag{$b$}\label{technicaltheoremb}
% \begin{split}
%r^2 h(r,t) \leq C \left(\delta_\tau \frac{\lambda(\tau)}{r} + \delta \left( \left( \frac{\lambda(\tau)}{r}  \right)^{\frac{3}{4}} w^3(r,t - \tau) + r^{\frac{11}{4}} w^{\frac{1}{4}}(r, t - \tau) + r^3 \right) \right)
%\end{split}
% \end{equation}
\begin{equation}\tag{$a$}\label{technicaltheorema}
e(r,t) < C_0 \sqrt{\epsilon} \cdot \max \LB \left( \frac{\lambda(\tau)}{r} \right)^{7/4}, \kappa_2 r^{7/4} \RB
\end{equation}
\begin{equation}\tag{$b$}\label{technicaltheoremb}
\begin{split}
r^2 h(r,t) < C_0 \delta \cdot \max \LB \left( \frac{\lambda(\tau)}{r} \right)^{3/4}, r^{7/4}  \RB
\end{split}
\end{equation}
\begin{equation}\tag{$c$}\label{technicaltheoremc}
\lambda \left( \tau + 1 \right) > \kappa_1 \lambda(\tau)^{1 + K_1 \delta_\tau^2}.
\end{equation}
%***rev 2: \leq changed to <.
Here $C_0 > 1$ %***rev2, needed for a-b proof?
is a universal constant, for which ($a$-$b$) hold in the base case. By Proposition \ref{shorttimeprop}, %***rev2
with $\alpha = 1/4$ and $\beta = 5/4,$ and Lemma \ref{annularantibubble}, (\ref{technicaltheorema}-\ref{technicaltheoremb}) clearly imply (\ref{technicaltheoremassumptions}-\ref{technicaltheoremassumptions2}). The hypothesis ($c$) implies (\ref{technicaltheoremmainestimate}), after undoing the rescaling and redefining constants. Hence it suffices to establish the induction hypotheses ($a$-$c$).

For induction, we assume that $\lambda_c < \bar{\lambda}$ is the largest scale on which the Theorem fails, {\it i.e.}, there exists a solution $A(t)$ satisfying (\ref{basicassumptions1}-\ref{basicassumptions3}), and a
time $\tau = \tau_c$ with
$$\lambda_0 \lambda_c < \lambda(\tau_c) \leq  \lambda_c$$ %***rev2
for which (\ref{technicaltheorema}-\ref{technicaltheoremc}) are not all satisfied. We will argue, provided that %$K_1$ is sufficiently large, depending on $\kappa_1,$ and
$\delta$ is sufficiently small---depending on $E, \epsilon,$ and $\bar{\lambda},$ but independently of $\lambda_c$---that this presents a contradiction.

Note first that $\lambda_c > 0.$ If $\lambda_c = 0,$ then the Theorem holds for all solutions and times $\tau$ with $\lambda(\tau) > 0.$ But then ($c$) implies an a priori bound\footnote{See Corollary \ref{expcor} for a sharper bound.}
$$| \log \lambda(\tau) | < | \log \bar{\lambda}| + K_2 e^{K_2 \tau} \quad \left( \tau < T \right).$$
Therefore all solutions as in (\ref{basicassumptions1}-\ref{basicassumptions3}) satisfy $\lambda(\tau) > 0$ for $\tau < T,$ and we are done. Hence it suffices to assume $\lambda_c > 0.$

Under the operation (\ref{rescaling}),
the curvature scale $\lambda_{\hat{\rho}}$ %***rev2
of $A_{\hat{\rho}}$ becomes
$$\lambda_{\hat{\rho}}(\hat{\rho}^{-2}\tau) = \hat{\rho}^{-1} \lambda(\tau).$$
Rescaling, we may assume that $\lambda_c = \lambda(\tau_c),$ and that $\tau_c$ is the latest time with this property. Then, after rescaling further by any $\lambda_0^{-1} \lambda_c \leq \hat{\rho} < 1,$ the induction hypotheses hold. %to $A_\rho(x,t).$ %***rev2

We first claim that ($a$-$b$) will automatically hold as long as
\begin{equation}\label{taurange}
\lambda(\tau) \geq \lambda_c^{1 + \gamma}
\end{equation}
where $\gamma > 0$ is a sufficiently small universal constant, in particular $\gamma \leq 1/50.$ %***rev2
Let
$$\rho_c = \frac{ \lambda_c^{1/2}}{\kappa_2^{2/7} }.$$
By induction, ($a$) is trivially satisfied for 
$r \geq \lambda_0^{-1} \rho_c$ and all $\tau_c \leq t < T.$ %**Yes, this is OK because you could just go back as far as you need to when the curv. scale was strictly bigger.

Given any $\tau \geq \tau_c$ as in (\ref{taurange}), we can rescale by a factor $\hat{\rho} = \lambda_0 \lambda_c^{\gamma} < 1,$ and again apply the induction hypothesis ($a$). Noting the scale-invariance of the first term on the LHS, we immediately have that ($a$) holds for
$r \leq \lambda_0 \lambda_c^{\gamma} \rho_c$
and all $\tau \leq t < T.$
Hence, to prove ($a$) for $\tau$ satisfying (\ref{taurange}), it remains to check the interval
\begin{equation}\label{middlerrange}
\lambda_0 \lambda_c^{\gamma} \rho_c \leq r \leq \lambda_0^{-1} \rho_c.
\end{equation}

Assume
\begin{equation}\label{gammaassumption}
K_1 \delta^2 \leq \gamma.
\end{equation}
By the induction hypothesis ($c$), we have
$$
\lambda_c > \kappa_1 \lambda(\tau_c - 1)^{1 + \gamma} $$
and
$$\lambda(\tau_c - 1) < \left( \lambda_c / \kappa_1 \right)^{1/(1 + \gamma)}.$$
Applying Proposition \ref{shorttimeprop} with $\rho = \lambda(\tau_c - 1)$ and $\tau_0 = \tau_c - 1,$ we obtain
\begin{align*}
e(r,t) & \leq C C_0 \sqrt{\epsilon} \left( \left( \frac{\left( \lambda_c / \kappa_1 \right)^{1/(1 + \gamma)}}{r} \right)^2 + r^2 \right) \qquad \left( t \geq \tau_c \right).
%r^2 h(r,t) & \leq C C_0 \delta \left( \frac{\left( \lambda_c / \kappa_1 \right)^{1/(1 + \gamma)} }{r} + r^3 \right) 
\end{align*}
Then, for $r$ as in (\ref{middlerrange}), we have
\begin{equation}\label{eestimatefora}
\begin{split}
e(r,t) & \leq C C_0 \sqrt{\epsilon} \left( \left( \frac{\left( \lambda_c / \kappa_1 \right)^{1 - \gamma}}{\lambda_c^{\gamma} \rho_c} \right)^2 + \rho_c^2 \right) \\
& \leq C_1 C_0 \sqrt{\epsilon} \left( \lambda_c^{1 - 4\gamma} + \lambda_c \right)
\end{split}
\end{equation}
where $C_1$ depends on $\kappa_1.$ Since $1 - 6 \gamma > 7/8,$ we have
$$C_1 \left( \lambda_c^{1 - 4\gamma} + \lambda_c \right) \leq C_1 \lambda_c^{2\gamma + 7/8} \leq C_1 \bar{\lambda}^{\gamma / 4} \left( \lambda_c^{2 \gamma + 1} \right)^{7/8} < \kappa_2 \left( \lambda_0 \lambda_c^\gamma \rho_c \right)^{7/4}$$
provided $ \bar{\lambda}$ is sufficiently small. %***rev2
Hence, (\ref{eestimatefora}) implies that ($a$) is also satisfied in the range (\ref{middlerrange}). This completes the proof of ($a$) for $\tau$ as in (\ref{taurange}), and the proof of ($b$) is similar.

It remains to assume that the hypothesis (\ref{technicaltheoremc}) fails, or
\begin{equation}\label{technicaltheoremcontradiction}
\lambda(\tau_c + 1) \leq \kappa_1 \lambda_c^{1 + K_1 \delta_{\tau_c}^2 }.
\end{equation}
Writing $\log = \log_{\kappa_1^{-1}},$ let
\begin{align*}
\nonumber L & = - \log \lambda_c, \qquad N = \lceil K_1 \delta_{\tau_c}^2 L \rceil
\end{align*}
which then obey $1 \leq N \leq \gamma L + 1,$ by (\ref{gammaassumption}).
%\begin{equation*}
%1 \leq N \leq \gamma L, \quad \quad \quad \lambda_c^{1 + \gamma} \leq \lambda_{i} \leq \lambda_c.
%\end{equation*}
For $i = 1, \ldots, N + 1,$ let $\tau_i$ be the latest time with
\begin{equation}\label{lambdaidef}
\lambda_i = \lambda(\tau_i) \geq \kappa_1^{i-1} \lambda_c
\end{equation}
Then $\tau_1 = \tau_c,$ and by Lemma \ref{basecaselemma} % and the choice of $\tau_i,$ we have
$$\kappa_1^{i - 1} \lambda_c \leq \lambda_{i} \leq \kappa_1^{i - 1} \lambda_0^{-1} \lambda_c.$$
%for $1 \leq i \leq N + 1,$
This implies
\begin{equation*}
\kappa_1 \lambda_0 \lambda_{i} \leq \lambda_{i + 1} \leq \kappa_1 \lambda_0^{-1} \lambda_{i}
\end{equation*}
for $1 \leq i \leq N,$ which shows that (\ref{lambdataurho}-\ref{lambdatau12}) are satisfied, with $\mu = \lambda_0 \kappa_1,$ for each triple of times $\tau_{i -1}, \tau_{i }, \tau_{i + 1},$ with $2 \leq i \leq N.$ Recall from (\ref{taurange}) that since
$$\lambda_i \geq \lambda_c^{1 + \gamma}$$
the assumptions ($a$-$b$) %, which are equivalent to ($1$-$2$) of Proposition \ref{shorttimeprop},
are also satisfied for each $\tau_i.$
Hence ($1$-$3$) of Proposition \ref{shorttimeprop}, with $\delta_\rho = \delta_i,$ are satisfied for each triple of times, as required to apply Proposition \ref{taubarprop}.

To ensure a contradiction, we also define $\tau_0$ as follows. If $\lambda(\tau_c - 1/2) \geq \lambda_c / \lambda_0 \kappa_1^2,$ then let $\tau_0 \geq \tau_c - 1/2$ be the latest time with this property; otherwise, let $\tau_0 = \tau_c - 1/2.$ With this choice, (\ref{lambdataurho}-\ref{lambdatau12}) also hold, with $\mu = \left(\lambda_0 \kappa_1 \right)^2,$ for the triple $\tau_0, \tau_1, \tau_2.$ 
 %***rev2
% based on Proposition \ref{shorttimeprop}$b.$ %For technical reasons, also define $\tau_{-1}$ and $\lambda_{-1}$ as follows. If $\lambda(\tau_c - 1/2) \geq \kappa_1^{-1} \lambda_c,$ then let $\tau_{-1}$ according to (\ref{lambdaidef}); if $\lambda(\tau_c - 1/2) \leq \kappa_1^{-1} \lambda_c,$ then take $\tau_{-1} = \tau_c - 1/2.$ In either case, let $\lambda_{-1} = \lambda(\tau_{-1}).$ % \kappa_1^{-1} \lambda_c.$
%$$\tau_{-1} = \tau_c - 1, \qquad \lambda_{-1} = \lambda(\tau_{-1}).$$ % \kappa_1^{-1} \lambda_c.$

Let
\begin{align}\label{technicaltheoremdefinitions}
\nonumber \sigma_i & = \tau_{i+1} - \tau_i \qquad \left(i = 0, \ldots, N \right)\\
\rho_i & = \frac{\lambda_{i -1} }{\sqrt{\sigma_{i}}} \quad \quad \left(i = 1, \ldots, N \right) \\
\nonumber \delta_i^2 & = \int_{\tau_{i-1} - \lambda_{i-1}^2}^{\tau_{i+1}} \! \int_{B_{2 \lambda_{i - 1} }} |D^*F|^2 \, dV dt \quad \quad \left(i = 1, \ldots, N\right).
\end{align}
The assumption (\ref{technicaltheoremcontradiction}) implies
$$1 \geq \tau_{N+1} - \tau_1 = \sum_{i = 1}^N \sigma_i $$
and in particular, $\sigma_i \leq 1.$ For $\delta$ sufficiently small, by Lemma \ref{basecaselemma}, we may also assume
\begin{equation}\label{rhoibasicbounds}
\rho_i \leq \kappa_2^{10}.
\end{equation}

Let $J = \{ i_j \}_{j = 1}^M \subset \{1, \ldots,  N \}$ be the set of all $i$ such that
\begin{equation}\label{sigmaeq}
\begin{split}
\rho_i^{3/4} \leq \sqrt{\kappa_2} \frac{\sigma_{i-1}}{\sigma_{i}}.
%\sigma_i & \geq \sigma_{i+1}^{1/2 + \theta} \lambda_i^{1-2\theta}.
\end{split}
\end{equation}
Note that after rescaling by $\sqrt{\sigma_{i}},$ (\ref{sigmaeq}) is precisely (\ref{rescalednottoosmall}), and Proposition \ref{taubarprop} can be applied. Also let $i_{M + 1} = N + 1$ for notational purposes. Let $J^c$ denote the complement of $J$ in $\{1, \ldots, N\}.$

Letting $N_0 = \lfloor \frac{3}{4} N \rfloor,$ we first claim that 
$J \cap \left\{1, \ldots, N_0 + 1 \right\} $ is nonempty. Assuming the contrary, we have $k \in J^c$ for all $1 \leq k \leq N_0 + 1.$ The negation of (\ref{sigmaeq}) reads
$$\rho_{k}^{3/4} = \frac{\lambda_{k - 1}^{3/4}}{\sigma_{k}^{3/8}} > \kappa_2^{1/2} \frac{\sigma_{k - 1}}{\sigma_{k}}$$
and
\begin{equation}
\begin{split}
\sigma_{k - 1} & < \kappa_2^{-1/2} \lambda_{k - 1}^{3/4} \sigma_{k}^{5/8} \leq \lambda_c^{2/3}.
\end{split}
\end{equation}
%since $\sigma_i \leq 1$ by assumption.....well, we now use this assumption in taking \rho_k \geq \lambda_c!
Hence
$$\tau_{N_0+1} - \tau_{0} = \sum_{i = 0}^{N_0} \sigma_i \leq \lambda_c^{2/3} \left( N_0 + 1 \right) \leq \lambda_c^{2/3} | \log \lambda_c | \leq \lambda_c^{1/2}$$
for $ \bar{\lambda}$ sufficiently small. Rescaling by
$$\hat{\rho} = \sqrt{\tau_{N_0 + 1} - \tau_0} \leq \lambda_c^{1/4}$$
and writing $\hat{\tau}_i$ for the rescaled times, we have
\begin{equation}\label{scalingcontradpriortosqueeze}
\lambda_{\hat{\rho}} (\hat{\tau}_0) \geq \frac{\lambda_c}{\lambda_0 \kappa_1^2 \hat{\rho} } \geq \frac{\lambda_c}{\lambda_0 \kappa_1^2 \lambda_c^{1/4}} > \lambda_c^{3/4}.
\end{equation}
But
\begin{equation*}\label{scalingcontradsqueeze}
\begin{split}
\lambda_{\hat{\rho}}(\hat{\tau}_0 + 1) = \lambda_{\hat{\rho}}(\hat{\tau}_{N_0 + 1}) = \frac{\lambda_{N_0 + 1}}{\hat{\rho}} & \leq \frac{ \lambda_c }{\lambda_0 \hat{\rho}} \kappa_1^{N_0}. % & = \frac{\lambda_c}{\hat{\rho}} \lambda_c^{\frac{4}{5} K_1 \delta_{\tau_c}^2}
\end{split}
\end{equation*}
Note from (\ref{scalingcontradpriortosqueeze}) that $\frac{ \lambda_c }{\lambda_0 \hat{\rho}} \leq \kappa_1^2 \lambda_{\hat{\rho}} (\hat{\tau}_0).$ Then
\begin{equation*}
\begin{split}
\lambda_{\hat{\rho}}(\hat{\tau}_0 + 1) \leq  \kappa_1^{N_0 + 2} \lambda_{\hat{\rho}}(\hat{\tau}_0) & \leq \kappa_1^{\frac{3}{4} N + 1} \lambda_{\hat{\rho}}(\hat{\tau}_0)  \\
& \leq \kappa_1 \lambda_c^{\frac{3}{4} K_1 \delta_{\tau_c}^2} \lambda_{\hat{\rho}}(\hat{\tau}_0) \\
& \leq \kappa_1 \lambda_{\hat{\rho}}(\hat{\tau}_0)^{1 + K_1 \delta_{\hat{\tau}_0}^2 }.
\end{split}
\end{equation*}
Since $\lambda_{\hat{\rho}}(\hat{\tau}_0) > \lambda_c,$ this contradicts the induction hypothesis (\ref{technicaltheoremc}), establishing the claim. 

Next, note from (\ref{technicaltheoremdefinitions}) that
\begin{align}\label{rhosigmabound}
\frac{\rho_{i+1}}{\rho_{i}} = \sqrt{\frac{\sigma_{i}}{\sigma_{i+1}}} \frac{\lambda_i}{\lambda_{i-1}} \leq \sqrt{\frac{\sigma_{i}}{\sigma_{i+1}}} \left( \lambda_0 \kappa_1 \right)^{-2}.
\end{align}
If $k \in J^c$ with $k \geq 2,$ the negation of (\ref{sigmaeq}) again reads
\begin{align*}
\rho_{k}^{3/4} > \kappa_2^{1/2} \frac{\sigma_{k-1}}{\sigma_{k}} \geq \kappa_2^{1/2} \left( \lambda_0 \kappa_1 \right)^4 \left( \frac{\rho_k}{\rho_{k - 1}} \right)^2
\end{align*}
where we have substituted (\ref{rhosigmabound}). Using (\ref{rhoibasicbounds}), we have
$$\rho_{k - 1}^2 \geq \kappa_2^{1/2} \left( \lambda_0 \kappa_1 \right)^4 \rho_k^{5/4} \geq \kappa_2^{5/2} \rho_k^{5/4} \geq \rho_k^{3/2}$$
$$\rho_{k-1} \geq \rho_k^{3/4}.$$
Hence, if $1 \leq i_1 < i_2 \leq N + 1$ are such that $\{ i_1 + 1, \ldots, i_2 - 1\} \subset J^c,$ then
\begin{align}
\label{sigmaprecalc} \log \rho_{i_1} & \geq \left(\frac{3}{4}\right)^{i_2 - i_1 - 1} \log \rho_{i_2 - 1} \geq - \left(\frac{3}{4}\right)^{i_2 - i_1 - 1} \left( 1 + \gamma \right) L.
\end{align}
%where we have used (\ref{rhoibasicbounds}).
%BE CAREFUL SHOULDN'T REPLACE k by k-1 here, will get \sigma_{k_1 + 1} mysteriously later!!!.

We now claim that %$c_1 = c_1(E, \epsilon)$ and
\begin{equation}\label{sigmaclaim}
\delta_{i_j}^2 > c \kappa_1^4 \epsilon \frac{i_{j+1} - i_j}{L} \qquad \left( i_j \in J \right).
\end{equation}
If $i_{j + 1} = i_j + 1,$ this follows directly from Proposition \ref{taubarprop}, with $\mu = \left( \lambda_0 \kappa_1 \right)^2.$ %***rev2
In general, %$i_{j} + 1 \not\in J,$
by (\ref{sigmaprecalc}), we have
\begin{equation*}\label{sigmacalc}
\begin{split}
\log \rho_{i_j} & \geq - \left( \frac{3}{4} \right)^{\left( i_{j+1} - i_j - 1 \right)} \left( 1 + \gamma \right) L.
\end{split}
\end{equation*}
Applying Proposition \ref{taubarprop} again yields
\begin{align*}
\delta_{i_j}^2 > \frac{c \mu^2 \epsilon }{- \log \rho_{i_j} } & \geq \left( \frac{4}{3} \right)^{\left( i_{j+1} - i_j - 1 \right)} \frac{c \left( \lambda_0 \kappa_1 \right)^4 \epsilon }{\left( 1 + \gamma \right) L} \geq c \kappa_1^4 \epsilon \frac{i_{j+1} - i_j}{L}
\end{align*}
as claimed. %This establishes (\ref{sigmaclaim}), with $c_1 = c \kappa_1^4 \epsilon.$
%The left-hand side of (\ref{sigmacalc}) is precisely ``$\rho^{-2}$'' after rescaling so that $\sigma_{i_j} = 1.$ Taking logarithms and applying Proposition \ref{shorttimeprop}$b$ proves the claim (\ref{sigmaclaim}).

Finally, note from Lemma \ref{basecaselemma} that
$$\LB \tau_{i} - \lambda_{i}^2 , \tau_{i + 2} \RB \subset \LB \tau_{i - 1}, \tau_{i + 2} \RB$$
for $\delta$ sufficiently small. Hence, by (\ref{technicaltheoremdefinitions}), the domains of integration of the $\delta_i$'s overlap at most 3 times. %$3 \delta_{\tau_c}^2 \geq \sum \delta_i^2.$
Using (\ref{sigmaclaim}), we obtain
\begin{equation*}
3 \delta_{\tau_c}^2 \geq \sum_{i = 1}^{N} \delta_i^2 \geq \sum_{j = 1}^M \delta_{i_j}^2 > \frac{c \kappa_1^4 \epsilon}{L} \sum_{j = 1}^M \left( i_{j+1} - i_j \right) \geq \frac{c \kappa_1^4 \epsilon N}{4L} \geq \frac{c \kappa_1^4 \epsilon \left( K_1 \delta_{\tau_c}^2 L \right)}{ L} \geq c K_1 \kappa_1^4 \epsilon \delta_{\tau_c}^2.
\end{equation*}
In view of (\ref{k1defn}), this simplifies to
$$3 > c K_1 \kappa_1^4 \epsilon = 3$$
which is a contradiction.

We have established ($a$-$c$), which imply (\ref{technicaltheoremassumptions}-\ref{technicaltheoremmainestimate}).
\end{proof}

\begin{cor}\label{expcor} Assume (\ref{basicassumptions1}-\ref{basicassumptions3}), with $\delta_0$ depending on $E, \epsilon,$ and $\bar{\lambda}.$ For $\tau \geq 0,$ the curvature scale satisfies
$$\lambda(\tau) > c e^{- K_2\tau} \cdot \lambda(0)$$
for a constant $K_2$ depending on $E$ and $\epsilon.$
\end{cor}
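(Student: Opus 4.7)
The plan is to iterate Theorem \ref{technicaltheorem} over integer times and convert the multiplicative recursion to a discrete Gronwall inequality by passing to logarithms. Set $\ell(\tau) = -\log \lambda(\tau) > 0$ and $b = -\log \kappa_1 > 0$. Then Theorem \ref{technicaltheorem} implies, for every integer $n \geq 0$ with $n+1 \leq T$,
\begin{equation*}
\ell(n+1) < b + (1 + K_1 \delta_n^2) \, \ell(n), \qquad \delta_n^2 = \int_{n - \lambda(n)^2}^{n+1} \!\!\! \int_{B_{2\lambda(n)}} |D^*F|^2 \, dV dt.
\end{equation*}

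The key input is a uniform bound on $\sum_n \delta_n^2$. Since $\lambda(n) \leq 1$, each time $t \in [-1, T]$ belongs to at most two of the intervals $[n - \lambda(n)^2, n+1]$, so by (\ref{basicassumptions2})
\begin{equation*}
\sum_{n \geq 0} \delta_n^2 \leq 2 \int_{-1}^T \!\! \int_{B_1} |D^*F|^2 \, dV dt \leq 2 \delta_0^2.
\end{equation*}
Choosing $\delta_0$ small enough that $2K_1 \delta_0^2 \leq 1$ (which we are free to do), the product $P := \prod_{n}(1 + K_1 \delta_n^2) \leq e^{2 K_1 \delta_0^2}$ is a fixed constant depending only on $E$ and $\epsilon$.

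Iterating the linear recursion in the standard way yields $\ell(n) \leq P \, \ell(0) + P b n$, which exponentiates to $\lambda(n) \geq \lambda(0)^P e^{-Pbn}$. For non-integer $\tau$, monotonicity of $\lambda$ gives
\begin{equation*}
\lambda(\tau) \geq \lambda(\lceil \tau \rceil) \geq \lambda(0)^P e^{-Pb(\tau + 1)} = e^{-Pb} \lambda(0)^P e^{-Pb\tau}.
\end{equation*}
Since $\lambda(0) \geq \bar{\lambda}$ by (\ref{basicassumptions3}), writing $\lambda(0)^P = \lambda(0)^{P-1} \lambda(0) \geq \bar{\lambda}^{P-1} \lambda(0)$ gives the stated inequality, with $K_2 = Pb$ (depending only on $E, \epsilon$) and $c = \bar{\lambda}^{P-1} e^{-Pb}$.

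The argument is essentially mechanical given Theorem \ref{technicaltheorem}; the only delicate point is the combinatorial bound on $\sum \delta_n^2$, which requires noting that the integration windows $[n - \lambda(n)^2, n+1]$ overlap with multiplicity at most two. Ensuring that $P$ (and hence $K_2$) depends only on $E$ and $\epsilon$ amounts to the observation that $\delta_0$ in the hypothesis of the theorem may always be further shrunk.
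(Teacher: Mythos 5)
Your proof is correct and follows the same essential strategy as the paper: iterate the one-step estimate (\ref{technicaltheoremmainestimate}) over integer times, pass to logarithms, and use the total dissipation bound (\ref{basicassumptions2}) to keep the cumulative growth under control. The one substantive difference is in how the accumulation is handled. You run a standard discrete Gronwall, arriving at $\ell(n) \leq P\ell(0) + Pbn$ with $P = \prod(1+K_1\delta_n^2)$, which puts a power $P$ on $\lambda(0)$ that you then absorb via $\lambda(0) \geq \bar{\lambda}$. The paper instead proves $\ell_i \leq 2k_1 i + \ell_0$ directly by induction—keeping the coefficient of $\ell_0$ equal to $1$—by bounding each term $K_1\delta_j^2\ell_j$ using the inductive bound on $\ell_j$ and summing, at the price of needing $K_1(2k_1 + |\log\bar{\lambda}|)\delta^2 < k_1$, i.e., $\delta_0$ small depending on $\bar{\lambda}$. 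So both arguments park the $\bar{\lambda}$-dependence somewhere (yours in $c$, the paper's in the threshold for $\delta_0$), and both are compatible with the corollary's hypotheses. Your explicit remark that the time windows $[n-\lambda(n)^2, n+1]$ overlap with multiplicity at most two is a small point the paper elides (it writes $\sum_j\delta_j^2 \leq \delta^2$ without the factor of $2$), and it is good that you made it precise.
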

\begin{proof}
Let $k_1 = - \log \kappa_1.$ For $ n \in \N,$ write
$$\ell_n = - \log \lambda(n)$$
and let
$$\delta_n = \delta_{\tau = n}$$
be as in the statement of Theorem \ref{technicaltheorem} (but different from (\ref{technicaltheoremdefinitions})). Applying logarithms to (\ref{technicaltheoremmainestimate}) yields
\begin{align}
\nonumber \ell_{n + 1} & < k_1 + \left( 1 + K_1 \delta_n^2 \right) \ell_n \\
\label{corbootstrap} \ell_{n + 1} - \ell_n & < k_1 + K_1 \delta_n^2 \ell_n.
\end{align}
We claim
\begin{equation}\label{corinduction}
\ell_n \leq 2 k_1 n + \ell_0.
\end{equation}
The base case $n = 0$ is trivial. For induction, assume (\ref{corinduction}) for a fixed $n \geq 0.$ Summing (\ref{corbootstrap}), we obtain
\begin{align*}
\ell_{n + 1} - \ell_0 = \sum_{m = 0}^n \left( \ell_{m+1} - \ell_m \right) & \leq (n + 1) k_1 + K_1 \left(2 k_1 n + \ell_0 \right) \sum_{m = 0}^{n} \delta_m^2 \\
& \leq \left( k_1 + 2 K_1 \left(2 k_1 + \ell_0 \right) \delta^2 \right)(n + 1).
\end{align*}
Assuming $2 K_1 \left( 2 k_1 + | \log \bar{\lambda}| \right) \delta^2 \leq k_1,$ this proves the claim for $n + 1,$ completing the induction.

Exponentiating (\ref{corinduction}) gives the desired result.
\end{proof}

\begin{proof}[Proof of Theorem \ref{mainthm}]\label{proofofmainthm} According to Proposition \ref{epsilonreg}, it suffices to prove (\ref{mainthmlimsupbound}) for $\epsilon_0,$ the universal constant of the above theorems. %then $\partial_t A = D^*F$ is locally bounded, as are all its derivatives, and $\lim_{t \to T} A(t)$ exists in $C^\infty_{loc}.$ %(See \cite{instantons}, Proposition 3.3, for a quantitative statement.)

Assume, for the sake of contradiction, that
\begin{equation}\label{epsilon0contradiction}
\lim_{\lambda \to 0} \limsup_{t \to T} \int_{B_{\lambda}} |F(t)|^2 \, dV = E_0 \geq \epsilon_0.
\end{equation}
The outer limit exists because the $\limsup$ is finite, by (\ref{globalenineq}), and decreasing with respect to $\lambda.$

For $\delta > 0$ arbitrary, we may choose $\lambda_1 > 0$ and $\tau_1 < T$ such that
\begin{align*}\label{ltecontradassumptions}
\sup_{\tau_1 \leq t < T} \int_{B_{\lambda_1}} |F(t)|^2 \, dV \leq E_0 + \delta.
\end{align*}
Since $|D^*F|^2$ is integrable, by (\ref{globalenineq}), we may further assume
\begin{equation*}
\int_{\tau_1}^T \!\!\! \int_{B_{\lambda_1}} |D^*F|^2 \, dV dt < \delta^2.
\end{equation*}
After rescaling, we may assume that $\lambda_1 = 1, \tau_1 = -1,$ and $T > 0.$ % here $T$ may increase, but remains finite.
Hence, the basic assumptions (\ref{basicassumptions1}-\ref{basicassumptions2}) are satisfied.

Note first that
\begin{equation}\label{lambdanonzero}
\lambda(\tau) > 0 \quad \left(\tau < T \right)
\end{equation}
provided that in Definition \ref{lambdadef}, we choose
$$\epsilon < \frac{\epsilon_0}{2 C}.$$
For, assume that $\lambda(\tau) = 0$ for some $\tau < T.$
Since $A(\tau)$ is smooth, we may choose $0 < \lambda < 1$ such that
$$\int_{B_{\lambda}} |F(\tau)|^2 \, dV \leq \frac{\epsilon_0}{2}.$$
Applying Proposition \ref{spendingprop}, with $\epsilon_1 = \epsilon_0/2$ and $\epsilon_2 = \epsilon,$ we have
$$\sup_{\tau \leq t < T} \int_{B_{\lambda/2}} |F(t)|^2 \, dV \leq \frac{\epsilon_0}{2} + C \epsilon < \epsilon_0.$$
This contradicts (\ref{epsilon0contradiction}), establishing (\ref{lambdanonzero}). % Lemma \ref{basecaselemma} then implies that $\lambda(\tau) > 0$ for all $\tau < T.$
% Rescaling again, we may assume
%\begin{equation}\label{lambda0biggerthanlambda0}
%\lambda(0) \geq \lambda_0.
%\end{equation}
%***This did not quite help! because we need the scale to already be less than \lambda_0 to apply the no-jumping lemma!

On the other hand, we must have %(\ref{epsilon0contradiction}) implies ***rev2
\begin{equation}\label{lambdato0}
\lambda(\tau) \longrightarrow 0 \quad (\tau \to T).
\end{equation}
If not, since $\lambda(\tau)$ is decreasing, we have $\lambda(\tau) \to \lambda > 0.$ Hence there exists $\tau$ with
\begin{equation}\label{lambdastar}
T - \lambda^2 \leq \tau < T
\end{equation}
for which, letting $U = U_{\lambda(\tau)/2}^{\lambda(\tau)},$ there holds
$$\int_{U} |F(\tau)|^2 \, dV \geq \epsilon.$$
But then
\begin{equation*}
\begin{split}
\int_{B_{\lambda/2}} |F(\tau)|^2 \, dV \leq \left( \int_{B_1} - \int_{U} \right) |F(\tau)|^2 \, dV
& \leq E_0 + \delta - \epsilon.
\end{split}
\end{equation*}
In view of Lemma \ref{antibubble}$a$ and (\ref{lambdastar}), provided that
$$\delta \leq \frac{\epsilon}{100 ( \sqrt{E_0} + 1 ) }$$ %***rev2
we have
\begin{equation*}
\sup_{\tau \leq t < T} \int_{B_{\lambda/4}} |F(t)|^2 \, dV \leq E_0 - \epsilon / 2.
\end{equation*}
This contradicts (\ref{epsilon0contradiction}), establishing (\ref{lambdato0}). %***rev2

But in view of (\ref{lambdanonzero}-\ref{lambdato0}), %and Lemma \ref{basecaselemma},
we may choose $0 \leq \tau_0 < T$ such that $0 < \lambda(\tau_0) \leq \lambda_0.$
Rescaling so that $\tau_0 = 0$ and $\lambda(0) = \lambda_0,$ the basic assumption (\ref{basicassumptions3}) is now satisfied. % with $\bar{\lambda} = \lambda_0^2.$
Since $T < \infty,$ Corollary \ref{expcor} implies that $\lambda(\tau) \, \not \!\! \longrightarrow 0$ as $\tau \to T,$ which in turn contradicts (\ref{lambdato0}).

Hence (\ref{epsilon0contradiction}) is impossible; the limit must be less than $\epsilon_0,$ as desired. 
\end{proof}

\appendix

\section{Annular heat equation}

%The Appendix contains supplementary calculations for Section \ref{annular}.

Recall the heat operator
\begin{equation*}
\square = \partial_t - \left( \partial_r^2 + \frac{1}{r}\partial_r - \frac{4}{r^2} \right)
\end{equation*}
defined in (\ref{square}) above. In this appendix, we study a general solution of the inhomogeneous heat equation
$$\square u(r,t) = \eta(r,t)$$
%for a time-dependent radial function $u(r,t)$ on $U^4(\rho, 1),$
on $\LB \rho, R \RB \times \LB 0, \infty \right),$ where $0 < \rho < R,$ with boundary conditions
$$u(r,0) = \varphi(r) \qquad (\rho \leq r \leq R)$$
$$u(\rho, t) = \psi(t), \qquad u(R, t) = \xi(t) \qquad (t > 0).$$
Propositions \ref{trivialinitialprop}-\ref{outsideprop} estimate separately the initial and boundary components of the homogeneous solution, while Propositions \ref{inhomogeneoushardprop}-\ref{inhomogeneousprop} estimate the particular solution. The proof of Theorem \ref{decaythm} relies on these results.

For the sake of completeness, we will sometimes include more detailed formulae than are strictly needed above.

\subsection{Radial heat kernel} Denote the annulus
$$U^n(\rho,R) = \left\{ \rho < r < R \right\} \subset \R^n$$
and let $H_{(\rho, R)}^n(r,s,t)$ be the radial Dirichlet heat kernel on $U^n(\rho,R).$ %i.e. the spherical average of the full heat kernel, with boundary condition $$H_{(\rho, R)}^n(r,s,t) =0 \mbox{  for  } r= \rho, R.$$

By the comparison principle, $H_{(\rho, R)}^n$ is dominated by the spherical average of the full heat kernel of $\R^n.$ An elementary computation with the latter shows
\begin{equation*}
0 \leq H_{\left( \rho ,R \right)}^n(r,s,t) \leq C_n \frac{ e^{- \left( r - s \right)^2 / 4t} }{ t^{\frac{1}{2}} \left(rs + t \right)^{\frac{n - 1}{2}} }.
\end{equation*}

Let $\square$ be the heat operator (\ref{square}). Note that
$$\square \, u(r,t) = 0 \iff \left( \partial_t + \Delta_{\R^6} \right)  \frac{u(r,t)}{r^2} = 0.$$
The solution $u_\varphi$ of the Dirichlet problem
\begin{equation}\label{initialprob}
\square u_\varphi(r,t) = 0
\end{equation}
\begin{equation}\label{initialboundaryprob}
\begin{split}
\qquad u_\varphi(r,t) & = \varphi(r) \qquad (\rho \leq r \leq R) \\
\qquad u_\varphi(\rho,t) & = u_\varphi(R,t) = 0 \qquad (0 < t < \infty).
\end{split}
\end{equation}
is therefore given by
\begin{equation*}
\frac{u_\varphi(r,t)}{r^2} = \int \frac{\varphi(s)}{s^2} \, H_{\left( \rho , R \right)}^6(r,s,t) \, s^5 ds
\end{equation*}
and
\begin{equation}\label{initialkernelbound0}
\begin{split}
u_\varphi(r,t) & = \int \varphi(s) \, r^2 H_{\left( \rho , R \right)}^6(r,s,t) \, s^3 ds \\
& \leq C \int_\rho^R | \varphi(s) | \, e^{-(r-s)^2 / 4t} \frac{r^2 s^3}{ \left( rs + t \right)^{5/2} } \, \frac{ds}{t^{1/2}}.
\end{split}
\end{equation}
The following Lemma will be used repeatedly below.

\begin{lemma}\label{holderlemma} Let
$$s > 0, \qquad 0 <  r_1 \leq r_2, \qquad 0 \leq t_1 < t \leq t_2, \qquad a,b,c,d \geq 0$$
$$w^m(s,t) = \left( \frac{s^2}{s^2 + t} \right)^{m/2}.$$
%\frac{v(s,t)^{2a}}{1 + v(s,t)^{2(a+c)}}
There holds
\begin{align*}\tag{$a$}
%\begin{split}
\int_{r_1}^{r_2} & e^{-(r-s)^2/4t} \frac{s^{2a} r^{2b} \left( r^2 + t \right)^c }{\left( rs + t \right)^{a+b+c+d}} \frac{dr}{t^{1/2}} \\
& \leq  \frac{C_{a,b,c,d}}{\left( s^2 + t \right)^{d}} \frac{r_2 - r_1}{r_2 - r_1+\sqrt{t}} \begin{cases}
\left( \dfrac{s^{2}}{r_1^2 + t} \right)^{a} w^{2b}(r_2, t) \, e^{-\left( r_1 - s\right)^2 / 5t} & (s \leq r_1) \\
w^{2a}(s,t) w^{2b}(r_2, t) & (r_1 \leq s \leq r_2) \\
w^{2a}(s,t) \left( \dfrac{r_2^2}{s^2 + t}\right)^{b} e^{-\left( s - r_2 \right)^2 / 5t} & (s \geq r_2).
\end{cases}
%\end{split}
\end{align*}
Provided $a +d > 1,$ also
\begin{align*}\tag{$b$}
%\begin{split}
 \int_{t_1}^{t_2} \!\!\!\! \int_{r_1}^{r_2} & e^{-(r-s)^2/4t} \frac{s^{2a} r^{2b} \left( r^2 + t \right)^c }{\left( rs + t \right)^{a+b+c+d}}  \frac{dr}{t^{1/2}} dt \\
& \leq \frac{C_{a,b,c,d}}{\left( s^2 + t_1\right)^{d-1}} \dfrac{t_2 - t_1}{t_2+ s^2} \begin{cases}
\left( \dfrac{s^2}{r_1^2 + t_1} \right)^{a} w^{2b+1}(r_2, t_1) \, e^{-\left( r_1 - s\right)^2 / 5t_2} & (s \leq r_1) \\[3mm]
w^{2a}(s,t_1) w^{2b+1}(r_2, t_1) & ( r_1 \leq s \leq r_2) \\[3mm]
w^{2a}(s,t_1) \left( \dfrac{r_2^2}{s^2 + t_1}\right)^{b \, + \frac{1}{2}} e^{-\left( s - r_2 \right)^2 / 5t_2} & (s \geq  r_2).
\end{cases}
%\end{split}
\end{align*}
\end{lemma}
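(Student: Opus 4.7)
The lemma is a standard, if technical, heat-kernel estimate. The plan is to establish part (a) by a pointwise bound on the polynomial weight followed by Gaussian integration in $r$, and then to deduce part (b) from part (a) by integration in $t$.

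For part (a), write the integrand as $e^{-(r-s)^2/4t}/\sqrt{t}$ times the polynomial weight
$$P(r,s,t) \;=\; \frac{s^{2a}\, r^{2b}\,(r^2+t)^c}{(rs+t)^{a+b+c+d}}.$$
The basic elementary inequalities, used repeatedly, are $rs+t \ge s^2+t$ when $r\ge s$ and $rs+t \ge r^2+t$ when $r\le s$; these yield directly the weight identifications $\bigl(s^2/(rs+t)\bigr)^a \le w^{2a}(s,t)$ in the first regime and $\bigl(r^2/(rs+t)\bigr)^b \le w^{2b}(r,t)$ in the second. The factor $(r^2+t)^c$ is controlled against $(rs+t)^c$ by the same observations plus the trivial bound $r^2+t\le 2(rs+t)$ when $s\ge r$.

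In the middle case $r_1\le s\le r_2$, I would split the $r$-integral at $r=s$; on either side, one of the two basic inequalities applies, giving the pointwise bound $P \le C\, w^{2a}(s,t)\, w^{2b}(r_2,t)/(s^2+t)^d$ after noting that $w^{2b}(\cdot,t)$ is monotone increasing. In the outer cases $s\le r_1$ or $s\ge r_2$, one splits the Gaussian as
$$e^{-(r-s)^2/4t} \;=\; e^{-(r-s)^2/5t}\, e^{-(r-s)^2/20t},$$
reserves the first factor to produce the claimed boundary exponential $e^{-(r_1-s)^2/5t}$ (resp.\ $e^{-(s-r_2)^2/5t}$), and uses the second to absorb any polynomial excess when transferring, for instance, $s^{2a}/(rs+t)^a$ to $\bigl(s^2/(r_1^2+t)\bigr)^a$. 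The remaining Gaussian integral in $r$ satisfies
$$\int_{r_1}^{r_2} e^{-(r-s)^2/20t}\,\frac{dr}{\sqrt{t}} \;\le\; C\,\min\!\left(1,\tfrac{r_2-r_1}{\sqrt{t}}\right) \;\le\; C\cdot \tfrac{r_2-r_1}{r_2-r_1+\sqrt{t}},$$
which accounts for the first factor on the right-hand side in (a).

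For part (b), I would plug the bound from (a) into the $t$-integral and integrate the $r$-independent factors. The key calculation, in the middle case, is
$$\int_{t_1}^{t_2} \frac{w^{2a}(s,t)\,w^{2b}(r_2,t)}{(s^2+t)^d}\cdot\tfrac{r_2-r_1}{r_2-r_1+\sqrt{t}}\,dt \;\le\; \frac{C\,w^{2a}(s,t_1)\,w^{2b+1}(r_2,t_1)}{(s^2+t_1)^{d-1}}\cdot\tfrac{t_2-t_1}{t_2+s^2},$$
where the convergence of $\int (s^2+t)^{-(a+d)} s^{2a}\,dt$ requires precisely $a+d>1$, and the exponent bump $2b\mapsto 2b+1$ reflects that integrating $(r_2-r_1)/(r_2-r_1+\sqrt{t})$ against the weight supplies one extra length-scale of $\sqrt{t}$. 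The outer-case bounds in (b) follow by the same computation, carrying the endpoint exponential through the $t$-integration, which is harmless since that factor only shrinks as $t$ increases.

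\textbf{Main obstacle.} The main difficulty is bookkeeping rather than analysis. In the outer cases one must split the Gaussian so that exactly the coefficient $1/5$ appears in the retained exponential (rather than $1/4$), and one must track how the various $w$-weights combine with the factor $(s^2+t)^{-d}$ to yield the precise weight pattern claimed in the statement. In part (b), the promotion $w^{2b}\mapsto w^{2b+1}$ and the condition $a+d>1$ must be carefully justified by examining the worst-case region $r\approx s\approx \sqrt{t}$.
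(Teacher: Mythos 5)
Your overall plan—control the polynomial weight, integrate the Gaussian in $r$, then integrate the result in $t$—is the right shape, and your treatment of the outer cases ($s\le r_1$, $s\ge r_2$) by splitting the Gaussian to extract the boundary exponential and absorb polynomial excess is essentially what is needed. The gap is in the middle case. The claimed pointwise bound $P(r,s,t)\le C\,w^{2a}(s,t)\,w^{2b}(r_2,t)/(s^2+t)^d$ for $r_1\le r\le r_2$, $r_1\le s\le r_2$, is false without help from the Gaussian. For instance with $a=1$, $b=c=d=0$, $s^2=100t$, $r=t/s\in[r_1,s]$, one gets $P=s^2/(rs+t)=50$ while $w^{2}(s,t)\approx 1$; the ratio grows without bound as $s^2/t\to\infty$. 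The factor $(r^2+t)^c/(rs+t)^c$ likewise exceeds $1$ whenever $r>s$. What saves the estimate is precisely the Gaussian $e^{-(r-s)^2/4t}$, which is small exactly when the polynomial excess is large; but your middle-case argument invokes only monotonicity of $w^{2b}$ and the elementary inequality $rs+t\ge s^2+t$ (resp.\ $\ge r^2+t$), with no Gaussian absorption, so it does not close.

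The paper sidesteps this by the change of variables $u=(r-s)/\sqrt{t}$, $v=s/\sqrt{t}$, after which the elementary inequalities $v^2\le u^2 + 2v|u+v|$ and $1+v^2 \le (1+v|u+v|)(2+u^2)$ package all the polynomial excess—in every regime, not only at the boundary—into a single factor $(2+u^2)^{a+c+d}$ that is integrable against $e^{-u^2/4}$ and also supplies the $e^{-(\max[u_1,-u_2,0])^2/5}$ tail. (The reduction to $b=0$ via (\ref{b=0}) is done first.) Your proposal can be repaired by applying the same Gaussian-splitting that you already use in the outer cases to the middle case as well, but as written the middle case contains a genuine gap. One smaller point: in the regime $s\le r_1$ you also need a step analogous to (\ref{dumbsrcomputation}) to replace $(s^2+t)^{-1}$ by $(r_1^2+t)^{-1}$ while keeping a uniform constant in the exponential; your sketch gestures at this but does not spell it out.
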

\begin{proof} First note for $r \leq r_2$ that
\begin{equation}\label{b=0}
\frac{r^2}{rs + t} \leq \begin{cases} w^2(r_2, t) \dfrac{r^2 + t}{rs + t} \\% & s \leq r_2 \\
\dfrac{r_2^2}{s^2} \dfrac{s^2}{rs + t}. %& s \geq r_2.
\end{cases}
\end{equation}
Hence it will suffice to prove ($a$) with $b=0.$

Let
$$u = \frac{r-s}{\sqrt{t}}, \qquad v = \frac{s}{\sqrt{t}}, \qquad u_1 = \frac{r_1-s}{\sqrt{t}}, \qquad u_2 = \frac{r_2-s}{\sqrt{t}}.$$
In these coordinates, we have
\begin{equation}\label{holderlemma1}
\begin{split}
\int_{r_1}^{r_2} \, e^{-(s-r)^2 / 4t} \frac{s^{2a}(r^2 + t)^{c}}{ \left( rs + t \right)^{a + c + d} } \, \frac{dr}{t^{1/2}} & = t^{-d} \int_{u_1}^{u_2} e^{-u^2/4} \frac{ v^{2a}\left( |u+v|^2 + 1 \right)^{c}}{\left(v|u+v| + 1\right)^{a + c + d}} \, du.
\end{split}
\end{equation}
Note that
\begin{equation*}
\begin{split}
v^2 \leq v (|u| + |u + v|) & \leq v |u| + v |u+v| %\\
 \leq \frac{1}{2} \left( u^2 + v^2 \right) + v |u+v| \\
 v^2 & \leq u^2 + 2 v |u + v|
\end{split}
\end{equation*}
and
\begin{equation*}
\begin{split}
1 + v^2 & \leq 1 + u^2 + 2v |u+v| %\\
\leq \left( 1 + v|u+v| \right) (2 + u^2).
\end{split}
\end{equation*}
Therefore
\begin{equation*}\label{holderlemma2}
\begin{split}
\left( 1 + v|u+v| \right)^{-1} \leq (2 + u^2) \left( 1 + v^2 \right)^{-1} \\
\frac{v^2}{1 + v|u+v|} \leq (2 + u^2) \frac{v^2}{1 + v^2}, \qquad \quad
\frac{1 + |u+v|^2}{1+v|u+v|} & \leq 2 + u^2.
\end{split}
\end{equation*}
Equation (\ref{holderlemma1}) becomes
\begin{equation*}
\begin{split}
\int_{r_1}^{r_2} \, e^{-(s-r)^2 / 4t} \frac{s^{2a}(r^2 + t)^{c}}{ \left( rs + t \right)^{a + c + d} } \, \frac{dr}{t^{1/2}}  & \leq \frac{v^{2a}}{ t^d \left( 1+v^{2a} \right) \left( 1 + v^{2d} \right)} \int_{u_1}^{u_2} e^{-u^2/4}\left( 2 + u^2 \right)^{a + c + d}  \, du \\
& \leq \frac{C}{t^d}\frac{v^{2a}}{1+v^{2(a+d)}} \cdot e^{- \left( \max \LB u_1, - u_2, 0 \RB \right)^2/5} \cdot \min \LB u_2 - u_1, 1 \RB.
\end{split}
\end{equation*}
Undoing the change-of-variable, and substituting (\ref{b=0}), we obtain
\begin{equation*}
(a) \, \leq \frac{C}{\left(s^2 + t \right)^d} \frac{r_2 - r_1}{r_2 - r_1 + \sqrt{t}} \begin{cases}
w^{2a}(s, t) w^{2b}(r_2, t) \, e^{-\left( r_1 - s\right)^2 / 5t} & (s \leq r_1) \\
w^{2a}(s,t) w^{2b}(r_2, t) & (r_1 \leq s \leq r_2) \\
w^{2(a + b)}(s,t) \left( \dfrac{r_2}{s} \right)^{2b} e^{-\left( s - r_2 \right)^2 / 5t} & (s \geq r_2).
\end{cases}
\end{equation*}
For $s \geq r_1$ this is the desired statement. For $s \leq r_1/2,$ we observe that
\begin{align}\label{dumbsrcomputation}
\frac{e^{-(s-r_1)^2 / 5t} }{s^2 + t} & \leq \frac{r_1^2 + t}{(r_1^2 + t)(s^2 + t)} e^{-(s-r_1)^2 / 5t}
\leq \frac{r_1^2 + t}{t} \frac{e^{-(s-r_1)^2 / 5t}}{r_1^2 + t} \leq C \frac{e^{-(s-r_1)^2 / 6t}}{r_1^2 + t}.
\end{align}

To prove ($b$), note that
\begin{equation*}
\frac{r_2 - r_1}{r_2 - r_1 + \sqrt{t}} \leq \frac{r_2}{r_2 + \sqrt{t}} \leq w^1(r_2, t).
\end{equation*}
For $s \geq 2 r_2,$ calculating as in (\ref{dumbsrcomputation}) also yields
$$\frac{e^{-(s - r_2)^2/5t}}{r_2 + \sqrt{t}} \leq C \frac{e^{-(s - r_2)^2/6t}}{s + \sqrt{t}}.$$
Then ($b$) follows by integrating in time using the following formulae. For $a > 1,$ we compute
\begin{align}\label{wintegral}
%\begin{split}
\int_{t_1}^{t_2} w^{2a}(r,t) \, dt %= \int_{t_1}^{t_2} \frac{r^{2a}}{\left( r^2 + t \right)^a} \, dt
= \left. \frac{r^{2a}}{1 - a} \left( r^2 + t \right)^{1-a} \right|^{t_2}_{t_1}
& = \frac{r^2}{a-1} w^{2a - 2}(r,t_1) \left( 1 - \left( \frac{r^2 + t_1}{r^2 + t_2} \right)^{a-1} \right) \\ \nonumber
& \leq C_a r^2 w^{2a - 2}(r,t_1) \left( 1 - \frac{r^2 + t_1}{r^2 + t_2} \right) \\ \nonumber
& \leq C_a r^2 w^{2a - 2}(r,t_1) \frac{t_2 - t_1}{r^2 + t_2}.
%\end{split}
\end{align}
%For $a =1,$ we have 
%$$\int_{t_1}^{t_2} w^{2}(r,t) \, dt \leq r^2 \log \frac{r^2 + t_2}{r^2 + t_1}.$$
For $0 \leq a < 1,$ we have
\begin{equation}\label{wintegral2}
\begin{split}
\int_{t_1}^{t_2} w^{2a}(r,t) \, dt %& = \left. \frac{r^{2a}}{1 - a} \left( r^2 + t \right)^{1-a} \right|^{t_2}_{t_1} \\
& = \frac{r^{2a}}{1-a} \left( \left( r^2 + t_2 \right)^{1 - a} - \left( r^2 + t_1 \right)^{1-a} \right) \\
& \leq \frac{r^{2a}}{1-a} \left( t_2 - t_1 \right)^{1-a}.
\end{split}
\end{equation}
\end{proof}

\subsection{Initial data}\label{initialdatasection}

Let $u_\varphi$ be the solution of the Dirichlet problem (\ref{initialprob}-\ref{initialboundaryprob}). Per (\ref{initialkernelbound0}), we have a pointwise bound
\begin{equation}\label{initialkernelbound}
\begin{split}
|u_\varphi(r,t)| & \leq C \int_\rho^R | \varphi(s) | \, e^{-(r-s)^2 / 4t} \frac{r^2 s^3}{ \left( rs + t \right)^{5/2} } \, \frac{ds}{t^{1/2}}.
\end{split}
\end{equation}

\begin{prop}\label{trivialinitialprop} Let
$$w^a(r,t) = \left( \frac{r^2}{r^2 + t} \right)^{a/2}.$$
For $-3 \leq k \leq 2,$ assuming
$$|\varphi (r) | \leq A r^k$$
there holds
\begin{equation*}%\tag{$a$}
|u_\varphi(r,t)| \leq C A \, r^k w^{2-k}(r,t) w^{4+k}(R,t).
\end{equation*}
\end{prop}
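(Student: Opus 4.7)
The plan is to derive the bound directly from the heat-kernel estimate (\ref{initialkernelbound}) by a single application of Lemma \ref{holderlemma}(a). First I would insert the pointwise hypothesis $|\varphi(s)| \le A s^k$ into (\ref{initialkernelbound}), reducing the problem to bounding
\[
C A \int_\rho^R e^{-(r-s)^2/4t}\, \frac{r^2 s^{k+3}}{(rs+t)^{5/2}}\, \frac{ds}{\sqrt{t}}.
\]
Relabeling so that $s$ plays the role of the lemma's integration variable and $r$ that of its parameter, the integrand fits the template $\frac{s^{2a} r^{2b}(r^2+t)^c}{(rs+t)^{a+b+c+d}}$ once one takes $a = 1$ (to absorb the $r^2$ prefactor), $2b + 2c = k+3$ (using $s^{2b}(s^2+t)^c \ge s^{k+3}$ on $[\rho, R]$), and $a + b + c + d = 5/2$ to reproduce the denominator. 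These three relations force $d = -k/2$.

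For $-3 \le k \le 0$ the choice $c = 0$, $b = (k+3)/2$, $d = -k/2$ is admissible (all four exponents non-negative), and Lemma \ref{holderlemma}(a) applied in the regime $\rho \le r \le R$ yields
\[
C A\, (r^2+t)^{k/2}\, w^2(r,t)\, w^{k+3}(R,t)\, \cdot\, \frac{R-\rho}{R-\rho+\sqrt{t}}.
\]
The algebraic identity $(r^2+t)^{k/2} w^2(r,t) = r^2(r^2+t)^{(k-2)/2} = r^k w^{2-k}(r,t)$ recombines the first two factors, while the elementary inequality
\[
\frac{R-\rho}{R-\rho+\sqrt{t}} \le \frac{R}{\sqrt{R^2+t}},
\]
equivalent after clearing denominators to $(R-\rho)(\sqrt{R^2+t}-R) \le R\sqrt{t}$ and in turn to $\sqrt{R^2+t} - R \le \sqrt{t}$, upgrades $w^{k+3}(R,t)$ to $w^{4+k}(R,t)$. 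This produces precisely the claimed bound in this range.

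The main obstacle is the range $0 < k \le 2$, where the constraint $d = -k/2 < 0$ falls outside the scope of Lemma \ref{holderlemma}. A direct application with $a = 1 - k/2$ restores $d = 0$ but dilutes the $r^2$ prefactor (using $r^{2-k} \ge r^2$ on $r \le 1$) and so overestimates the bound by a factor of $r^{-k}$; equivalently, the free-space kernel bound (\ref{initialkernelbound}) does not exhibit the Dirichlet-type decay $(R^2/(R^2+t))^{k/2}$ at the outer boundary. I expect the remedy to be a two-step argument: apply the $k = 0$ instance of Lemma \ref{holderlemma}(a) to the majorized integrand $s^{k+3} \le C(r^2+t)^{k/2} s^3$, valid on the effective Gaussian support $|s-r| \lesssim \sqrt{t}$, so as to recover the $r^k$ prefactor from $(r^2+t)^{k/2} w^2(r,t) = r^k w^{2-k}(r,t)$; and separately handle the tail $|s-r| \gg \sqrt{t}$ via the exponential decay of $e^{-(r-s)^2/4t}$, absorbing its contribution into the sharper weight $w^{4+k}(R,t)$ through the same $R-\rho$ factor trick used above.
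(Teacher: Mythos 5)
Your proof is correct on the range $-3 \le k \le 0$ and is a valid application of Lemma \ref{holderlemma}(a), matching the paper's bound; but your parameter choice $a=1$, $d=-k/2$ is what creates the apparent obstruction at $k>0$, and the diagnosis that follows is mistaken. You write that taking $a = 1-k/2$ (so that $d = 0$) ``dilutes the $r^2$ prefactor (using $r^{2-k}\ge r^2$),'' but no such inequality is needed: since $r$ is a parameter (not the integration variable $s$), you can write $r^2 = r^k\cdot r^{2-k}$ as an \emph{exact} identity and pull $r^k$ outside the integral. This is precisely what the paper does. After factoring out $r^k$, the remaining integrand
\[
\frac{r^{2-k}\,s^{3+k}}{(rs+t)^{5/2}}
\]
fits the template with $a = 1-\tfrac{k}{2}$, $b = \tfrac{3+k}{2}$, $c = d = 0$, and all four are nonnegative uniformly on $-3\le k\le 2$. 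Lemma \ref{holderlemma}(a) in the middle regime then gives $CAr^k\,w^{2-k}(r,t)\,w^{3+k}(R,t)\cdot\frac{R-\rho}{R-\rho+\sqrt{t}}$, and the upgrade $\frac{R-\rho}{R-\rho+\sqrt{t}}\le w^1(R,t)$ (which you identified correctly) yields $w^{4+k}(R,t)$.

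Thus the two-step core/tail decomposition you sketch for $0<k\le 2$ is not needed, and as written it would not close cleanly: majorizing $s^{k+3}\le C(r^2+t)^{k/2}s^3$ on the Gaussian core and applying the $k=0$ instance produces $r^k w^{2-k}(r,t)\,w^4(R,t)$, which is short a factor $w^k(R,t)$ relative to the claimed bound, and you do not indicate how the tail contribution recovers it. The fix is to abandon the split entirely and factor out $r^k$ as above, which handles the whole range $-3\le k\le 2$ in one stroke and reproduces the paper's one-line proof.
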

\begin{proof} We have
\begin{align*}
| u_\varphi(r,t) | & \leq C A \int_{\rho}^R \, e^{-(r-s)^2 / 4t} \frac{r^2 s^{3 + k} }{ \left( rs + t \right)^{5/2} } \, \frac{ds}{t^{1/2}} \\
& \leq C A r^k \int_{\rho}^R \, e^{-(r-s)^2 / 4t} \frac{r^{2-k} s^{3 + k} }{ \left( rs + t \right)^{5/2} } \, \frac{ds}{t^{1/2}}.
%& \leq C A r^k w^{2-k}(r,t) w^{4+k}(R,t)
\end{align*}
Applying Lemma \ref{holderlemma}$a$ yields the claim.
\end{proof}

\begin{prop}\label{initialprop} Let
$$\rho \leq \bar{r} \leq R, \qquad  0 \leq t_1 \leq t_2 \leq \infty, \qquad -4 \leq m < 4.$$
The solution $u_\varphi$ of (\ref{initialprob}-\ref{initialboundaryprob}) satisfies
%\begin{equation*}
%\int_{t_1}^{t_2} \int_{r_1}^{r_2} u_\varphi^2 \, r^m dr dt \leq C_{m} \begin{cases} w^{4 - m}(R, t_1) w^{4 + m}(r_2, t_1) \int_{\rho}^R \varphi^2(s) \, s^{m+2} ds & (a) \\[3mm]
%\begin{split}
%r_2^{m+2} w^{3}(r_2,t_1) & \left( w^5(r_2, t_1) \int_{\rho}^{r_2} \varphi^2(s) \, \left( \frac{s}{r_2} \right)^{m+2} ds \right. \\
%& \quad \quad \left. + \int_{r_2}^{R} \varphi^2(s) w^{5}(s,t_1) \left( \frac{r_2^2 + t_1}{s^2 + t_1} \right)^{3/2} \, ds \right)
%\end{split} & (b)
%\end{cases}
%\end{equation*}
%\begin{equation*}
%\begin{split}
%\int_{t_1}^{t_2} \int_{r_1}^{r_2} u_\varphi^2 \, r^m dr dt \leq C_{m}
% w^{3}(r_2,t_1) & \left( w^5(r_2, t_1) \int_{\rho}^{r_2} \varphi^2(s) \, s^{m+2} ds \right. \\
%& \left. \quad \quad + \, r_2^{m+2} \int_{r_2}^{R} \varphi^2(s) \, w^{5}(s,t_1) \left( \frac{r_2^2 + t_1}{s^2 + t_1} \right)^{3/2} \, ds \right)
%\end{split}
%\end{equation*}
\begin{equation*}
\begin{split}
& \int_{t_1}^{t_2} \!\!\!\! \int_{\rho}^{\bar{r}} u_\varphi^2 \, r^m dr dt \\
& \quad \leq C_{m}
 \left( w^{m + 4}(\bar{r}, t_1) \int_{\rho}^{\bar{r}} \, s^{m+2} + \bar{r}^{m+2} \int_{\bar{r}}^{R} w^{5}(s,t_1) \left( \dfrac{\bar{r}^2}{s^2 + t_1} \right)^{3/2} \right) \varphi^2(s) \, ds
\end{split}
\end{equation*}
\end{prop}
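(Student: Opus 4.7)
The plan is to combine a Cauchy-Schwarz on the kernel representation (\ref{initialkernelbound}) with an application of Lemma \ref{holderlemma}$b$. First, Cauchy-Schwarz in $s$ yields
\begin{equation*}
u_\varphi(r,t)^2 \leq \left(\int_\rho^R K(r,s,t)\, ds\right)\!\left(\int_\rho^R \varphi^2(s)\, K(r,s,t)\, ds\right),
\end{equation*}
where $K(r,s,t)$ denotes the pointwise kernel bound appearing in (\ref{initialkernelbound}). Lemma \ref{holderlemma}$a$ applied in the $s$-integration (with $a = 1,\, b = 3/2,\, c = d = 0$, after swapping the roles of $r$ and $s$) furnishes the sharp estimate
\begin{equation*}
\int_\rho^R K(r,s,t)\, ds \leq C\, w^2(r,t)\, w^3(R, t)\, \frac{R-\rho}{R-\rho+\sqrt{t}},
\end{equation*}
whose $t$-decay will be essential below. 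By Fubini, the problem reduces to controlling, for each fixed $s \in [\rho, R]$, the quantity
\begin{equation*}
L(s) := \int_{t_1}^{t_2}\!\!\int_\rho^{\bar r} w^2(r,t)\, w^3(R,t)\, \frac{R-\rho}{R-\rho+\sqrt{t}}\, K(r,s,t)\, r^m\, dr\, dt,
\end{equation*}
and showing that the first term of the proposition bounds its integral against $\varphi^2$ for $s \leq \bar r$, and the second for $s \geq \bar r$.

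One estimates $L(s)$ directly by Lemma \ref{holderlemma}$b$. The factor $w^2(r,t) = r^2/(r^2+t)$ is absorbed into the kernel by promoting $r^{m+2}$ to $r^{m+4}$ and inserting a $c = -1$ power of $(r^2+t)^{-1}$, while the time-decay $w^3(R,t)(R-\rho)/(R-\rho+\sqrt{t})$ is converted into additional powers in the $(rs+t)$ denominator via the key algebraic inequality $(R^2+t)^{-1} \leq (rs+t)^{-1}$, valid since $rs \leq R^2$ whenever $r \leq \bar r \leq R$ and $s \leq R$. After this absorption the integrand fits Lemma \ref{holderlemma}$b$ with $a = 3/2$, $b$ matching $r^{m+4}$, and $d$ large enough to ensure $a + d > 1$ throughout the entire range $-4 \leq m < 4$. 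Splitting the $s$-integration at $s = \bar r$ and applying the middle case of the lemma for $s \leq \bar r$ yields, after collecting powers via $w^j(r,t) = r^j(r^2+t)^{-j/2}$ and using $s \leq \bar r$ to simplify $(s^2+t_1)$ relative to $(\bar r^2 + t_1)$, a bound of the form $L(s) \leq C\, s^{m+2}\, w^{m+4}(\bar r, t_1) \cdot \frac{t_2 - t_1}{t_2 + s^2}$. The right-hand case of the lemma, applied for $s \geq \bar r$, contributes the exponential factor $e^{-(s - \bar r)^2/5 t_2}$ together with the weight $w^{2a}(s, t_1)(\bar r^2/(s^2+t_1))^{b + 1/2}$; combining with the $\bar r^{m+2}$ prefactor extracted from $r^{m+2} \leq \bar r^{m+2}$ produces the second term of the proposition.

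The principal obstacle is the restriction $a + d > 1$ in Lemma \ref{holderlemma}$b$: with the naive parameter choice ($a = 3/2,\, b = (m+2)/2,\, c = d - m/2 = 0$) the constraint becomes $m < 1$, failing on most of the asserted range. The remedy described above---using the full time-decay of the sharp Schur estimate and converting it into powers of $(rs+t)^{-1}$ via $(R^2+t)^{-1} \leq (rs+t)^{-1}$---shifts $d$ upward by the requisite amount. The remainder of the proof, namely the matching of $w$-factors in the two cases $s \lessgtr \bar r$ and the verification that the two simplified expressions coincide with the two terms on the right-hand side of the proposition, is elementary algebra driven by the scaling identity $L(s) = s^{m+2} \cdot F(\rho/s,\, \bar r/s,\, t_1/s^2,\, t_2/s^2)$ for a dimensionless function $F$.
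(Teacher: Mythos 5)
Your approach diverges from the paper's at the very first step, and the divergence creates difficulties you do not resolve. The paper's Cauchy--Schwarz is asymmetric: it writes $K = A\cdot B$ with $B = e^{-(r-s)^2/8t}t^{-1/4}$, so that $\int B^2\,ds \leq C$ is a pure Gaussian integral, while the remaining factor $A^2 = e^{-(r-s)^2/4t}\,\frac{r^4 s^6}{(rs+t)^5 t^{1/2}}$ already has the exact shape demanded by Lemma \ref{holderlemma}$b$, with $2a = 4-m$, $2b = 4+m$, $c = 0$, $d = 1$ (so $a,b,c,d\geq 0$ and $a + d = (6-m)/2 > 1$ throughout $-4\leq m<4$). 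No absorption tricks are needed. Your symmetric split $K = K^{1/2}\cdot K^{1/2}$ instead produces the factor $\int K\,ds \lesssim w^2(r,t)\,w^3(R,t)\,(R-\rho)/(R-\rho+\sqrt{t})$, and you must then fit the product of this with $K\,r^m$ into the lemma.

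This is where the argument breaks. First, the parameter you call $c = -1$ is simply not admissible: Lemma \ref{holderlemma} requires $a,b,c,d \geq 0$, so $w^2(r,t) = r^2(r^2+t)^{-1}$ cannot be encoded by ``promoting $r^{m+2}$ to $r^{m+4}$ and inserting a $c=-1$ power.'' Second, and more seriously, the proposed conversion $(R^2+t)^{-1}\leq (rs+t)^{-1}$ is enormously lossy in exactly the regime that matters. At $t_1 = 0$, $r,s \sim \rho \ll R$ and $t\lesssim\rho^2$, the left side is $\approx R^{-2}$ while the right side is $\approx\rho^{-2}$; replacing the former by the latter inflates the estimate by a factor $(R/\rho)^2$ per power of $(rs+t)^{-1}$ inserted. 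Carrying this through Lemma \ref{holderlemma}$b$ (with, say, $a = 3/2$, $b = (m+2)/2$, $c = 0$, $d = (4-m)/2$ after absorbing two powers) gives at $t_1 = 0$ a bound of order $s^{m-2}$, whereas the proposition requires $s^{m+2}$ --- a deficit of $s^{-4}$ that grows without bound as $s\to 0$. So while the underlying integral $L(s)$ is in fact finite, the chain of inequalities you describe does not establish the stated bound. The scaling identity you invoke at the end also omits the residual dependence on $R/s$ carried by the $w^3(R,t)$ and $(R-\rho)/(R-\rho+\sqrt{t})$ factors, which is precisely what the lossy absorption mishandles. The remedy is not to tune the absorption but to use the paper's asymmetric Schur decomposition from the outset, which sidesteps these factors entirely.
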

\begin{proof} %Let $u = u_\varphi.$
We apply H\"older's inequality
\begin{align*}\label{initialprop1}
%\begin{split}
u_\varphi (r,t)^2 & \leq \left( C\int_{\rho}^{R}  |\varphi(s)| \, e^{-(r-s)^2/4t} \frac{r^2 s^3}{(rs + t)^{5/2}} \frac{ds}{t^{1/2}} \right)^2 \\
& \leq C \int_{\rho}^{R} \varphi^2(s) \, e^{-(r-s)^2/4t} \frac{r^4 s^6}{(rs + t)^5} \frac{ds}{t^{1/2}} \cdot \int_{\rho}^{R} e^{-(r-s)^2/4t} \frac{ds}{t^{1/2}} \\
& \leq C \int_{\rho}^{R} \varphi^2(s) \, e^{-(r-s)^2/4t} \frac{r^4 s^6}{(rs + t)^{5}} \frac{ds}{t^{1/2}}.
%\end{split}
\end{align*}
Then compute
\begin{equation*}
\begin{split}
\int_{t_1}^{t_2} \!\!\!\! \int_{\rho}^{\bar{r}} u_\varphi^2(r,t) & \, r^m dr dt  \leq C \int_{\rho}^{R} \varphi^2(s) \LB \int_{t_1}^{t_2} \!\!\!\! \int_{\rho}^{\bar{r}} e^{-(r-s)^2 / 4t} \frac{s^{4 - m} r^{4+m} }{(rs + t)^5} \frac{dr}{t^{1/2}} \, dt \RB \, s^{m + 2} ds.
\end{split}
\end{equation*}
Applying Lemma \ref{holderlemma}$b$ yields the claim.
\end{proof}

\subsection{Inner boundary data}\label{innerappendixsection} %We first solve the ordinary heat equation on the annulus $U^n(1,R)$ with arbitrary time-dependent data at the inner boundary.%, and later obtain the solution of (\ref{insideq}-\ref{insideboundconds}) by rescaling.

Next, we construct and estimate a solution of the boundary-value problem 
\begin{equation}\label{insideq}
\square u_\psi(r,t) = 0
\end{equation}
with
\begin{equation}\label{insideboundconds}
\begin{split}
u_\psi(\rho,t) = \psi(t), \qquad u_\psi(1,t) = 0  \qquad (t_0 \leq t \leq t_2) \\
u_\psi(r,t_0) = 0 \qquad (\rho < r \leq 1).
\end{split}
\end{equation}
%This may be solved by writing
%\begin{equation}\label{upsidef}
%u_\psi(r,t) = \int_{t_0}^t \psi(\tau) \, G_\rho(r, t - \tau) \, d\tau
%\end{equation}
%where $G_\rho$ is the kernel constructed in the Appendix, satisfying
%\begin{equation*}
%\quad 0 \leq G_\rho(r,t) \leq \frac{C \rho^2 r^2 e^{-(r-\rho)^2/5t} }{t(t+\rho^2)^2}. %\begin{cases} \min %\LB (r - \rho)/\sqrt{t}, 1 \RB & ( t \leq \rho^2) \\
%\min \LB (r - \rho)/\rho, 1 \RB & (t \geq \rho^2). \end{cases}
%\end{equation*}

Denote the complementary error function
$$\mbox{erfc}(x) = \frac{2}{\sqrt{\pi}} \int_x^\infty e^{-\xi^2} d\xi.$$
This is decreasing and satisfies (see \cite{carslaw}, appendix)
\begin{equation}\label{erfcbound} \max\LB 1-\frac{2}{\sqrt{\pi}} x, \frac{e^{-x^2}}{\sqrt{\pi} x} \left( 1 - \frac{1}{2x^2}\right) \RB \leq \mbox{erfc}(x) \leq \min \LB 1, \frac{e^{-x^2}}{\sqrt{\pi} x} \RB \quad \left( x \geq 0 \right).
\end{equation}
For $r \geq 1,$ let
$$\overline{u}_1(r,t) = \text{erfc}\left( \dfrac{r-1}{2\sqrt{t}} \right).$$
Then $0 \leq \bar{u}_1 \leq 1$ and $u_1$ is smooth away from $(r,t) = (1,0),$ with
\begin{equation*}
\begin{split}
\left( \partial_t - \partial_r^2 \right) \overline{u}_1(r,t) & = 0 \qquad (r > 1 \text{ or } t > 0) \\
\overline{u}_1(1,t) = 1, \quad \quad \quad & \overline{u}_1(r,0)  = 0 \qquad \left(r > 1 \right).
%0 \leq \overline{u}_1(r,t) & \leq 1 = \lim_{t\to \infty} \overline{u}_1(r,t) \\
%\partial_r \overline{u}_1(r,t) & \leq 0 \\
%\partial_t \overline{u}_1(r,t) \geq 0
\end{split}
\end{equation*}

Note that
$$\left( \partial_t - \left( \partial_r^2 + \frac{n - 1}{r} \partial_r \right) \right) \overline{u}_1 = -\frac{n-1}{r} \partial_r \overline{u}_1 \geq 0$$
hence $\overline{u}_1(r,t)$ is a nonnegative supersolution for the heat equation in $\R^n \setminus B_1(0).$ But for $n \geq 3$ and $R\geq 2,$ let
$$h_R(r) = \frac{ r^{2-n} - R^{2-n}}{1-R^{2-n}} \quad \quad \underline{u}_1(r,t) = h_R(r) \overline{u}_1(r,t).$$
Then
\begin{align*}
\left( \partial_t - \left( \partial_r^2 + \frac{n-1}{r} \partial_r \right) \right) \underline{u}_1 & = \frac{ - 2 \partial_r r^{2-n} \partial_r \overline{u}_1 - \left( r^{2-n} - R^{2-n} \right) \frac{n-1}{r} \partial_r \overline{u}_1 }{1-R^{2-n}} \\
& = \frac{(n-3)r^{1-n} + (n-1)R^{2-n}r^{-1}}{1-R^{2-n}} \partial_r \overline{u}_1 \leq 0.
\end{align*}
Hence $\underline{u}_1(r,t)$ is a subsolution with
$$\underline{u}_1(1,t) = 1, \qquad \underline{u}_1(R,t) =0 $$
on $U^n(1,R).$ Now, let
$$u_1 = u_1^{n,R} = h_R - H^n_{(1,R)} \ast h_R$$
be the solution of the heat equation on $U^n(1,R) \times \LB 0, \infty \right) $ with
$$ u_1(1,t) = 1, \quad \quad u_1(R,t) = 0 \qquad (0 < t < \infty) $$
$$u_1(r,0) \equiv 0 \qquad  (1 < r \leq R).$$

\begin{lemma}\label{u1boundlemma} The solution $u_1$ satisfies
\begin{equation}\tag{$a$}\label{u1bound}
\begin{split}
h_R(r) \cdot \max \LB 1 - \frac{r-1}{\sqrt{\pi \, t}} , \frac{2 \sqrt{t}}{\sqrt{\pi}(r-1)} \right. & \left. \left( 1 - \frac{2t}{(r-1)^2} \right) e^{-(r-1)^2 / 4t } \RB \\
& \leq u_1(r,t) \leq \min \LB h_R(r) , \frac{ 2 \sqrt{t}}{\sqrt{\pi} (r-1)} e^{-(r-1)^2 / 4t} \RB
\end{split}
\end{equation}
\begin{equation}\tag{$b$}\label{u1tbound}
0 \leq \partial_t u_1(r,t) \leq \frac{C_n e^{-(r-1)^2/5t}}{t(t+1)^{n/2-1}} \cdot \begin{cases} \min \LB (r - 1)/\sqrt{t}, 1 \RB & ( t \leq 1) \\
\min \LB r - 1, 1 \RB & (t \geq 1). \end{cases}
\end{equation}
\end{lemma}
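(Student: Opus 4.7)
For part ($a$), I would invoke the parabolic maximum principle three times, using the sub- and supersolutions the author has already verified. First, the stationary harmonic function $h_R(r)$ satisfies the same Dirichlet conditions as $u_1$ with larger initial data $h_R \geq 0 = u_1(\cdot, 0)$, giving the upper bound $u_1 \leq h_R$. Second, the supersolution $\overline{u}_1(r,t) = \mbox{erfc}((r-1)/(2\sqrt{t}))$ has boundary and initial values dominating those of $u_1$ (the outer boundary value $\overline{u}_1(R,t) \geq 0$ being the key point), which yields $u_1 \leq \overline{u}_1$. Third, the subsolution $\underline{u}_1 = h_R \cdot \overline{u}_1$ vanishes on both boundary components (using $h_R(R) = 0$) and equals $u_1$ on the inner boundary and at $t=0$, hence $u_1 \geq h_R \overline{u}_1$. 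Substituting the two-sided asymptotics (\ref{erfcbound}) into these sandwiches yields ($a$).

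For part ($b$), the nonnegativity $\partial_t u_1 \geq 0$ follows by applying the maximum principle to $w_s(r,t) := u_1(r, t+s) - u_1(r,t)$ for arbitrary $s > 0$, which solves the same Dirichlet problem with zero boundary data and nonnegative initial data $u_1(\cdot, s) \geq 0$. Hence $u_1$ is monotone increasing in $t$.

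The upper bound on $\partial_t u_1$ is the main obstacle, and I would prove it via the Poisson-kernel representation. Since $u_1$ is the Poisson extension of the inner-boundary datum $\mathbbm{1}_{[0,\infty)}(t)$, the time derivative
\[
\partial_t u_1(r, t) = - \partial_s H^n_{(1,R)}(r, s, t)\big|_{s=1}
\]
is (a spherical average of) the inner normal derivative of the annular Dirichlet heat kernel. For $t \leq 1$ I would compare to the half-space problem on $r > 1$, whose analogous boundary derivative is the explicit expression $\partial_t \overline{u}_1(r,t) = \frac{r-1}{2\sqrt{\pi}\, t^{3/2}} e^{-(r-1)^2/4t}$; a reflection/comparison argument (the annular kernel is dominated by the half-space kernel up to an outer-boundary correction that is exponentially small for $t \leq 1$, since $R \geq 2$) converts this into the stated form $t^{-1} \min((r-1)/\sqrt{t}, 1)\, e^{-(r-1)^2/5t}$. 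For $t \geq 1$ I would instead use that $w(r,t) := h_R(r) - u_1(r,t)$ has bounded initial data and zero Dirichlet data, so standard $n$-dimensional Dirichlet heat kernel bounds (which contribute the factor $t^{-n/2}$ in the long-time regime) combined with the vanishing $w(1,t) = 0$, exploited by writing $w$ as a heat evolution of its initial data and factoring out the boundary distance, produce the remaining $\min(r-1, 1)$ factor.

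The technical heart of the argument lies in extracting these two linear-in-boundary-distance factors, which encode the identity $\partial_t u_1(1,t) \equiv 0$ and cannot be recovered from crude sup bounds on the kernel alone. In the short-time case, they follow from the explicit half-space computation, where differentiation in the source variable produces the requisite factor of $r-1$; in the long-time case, the argument reduces to the one-variable estimate from Lemma \ref{holderlemma} applied to the kernel convolved with an initial datum that vanishes on the inner boundary and is bounded elsewhere.
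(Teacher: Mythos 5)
Part $(a)$ and the nonnegativity of $\partial_t u_1$ in $(b)$ match the paper, modulo the fact that the paper deduces $\partial_t u_1\geq 0$ by observing it holds for small $t$ (from the explicit two-sided erfc bounds) and then invoking the maximum principle for $\partial_t u_1$ itself, whereas you translate $u_1$ in time---both are valid.

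For the upper bound in $(b)$ you and the paper diverge substantially. The paper does not touch the Duhamel representation or any kernel-level comparison: it applies the $L^\infty$ bound from $(a)$ together with interior parabolic Schauder estimates (Lieberman, Ch.~4) on parabolically rescaled cylinders to bound $|D^k u_1|$ and hence $|\partial_t u_1| = |\Delta u_1|$ for $t\leq 2$, then notes that $\partial_t u_1$ solves the heat equation with zero Dirichlet data, propagates the $t=1$ bound forward through the annular heat kernel using an elementary pointwise manipulation of the exponent $(r-s)^2/(t-1)+(s-1)^2$, and finally obtains the boundary-distance factor $\min(r-1,1)$ from Lieberman's \emph{boundary} gradient estimates applied to $\partial_t u_1$. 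Your kernel-theoretic plan trades all of this regularity theory for explicit heat kernel comparisons, which is a legitimate strategy, but the two places where you appeal to those comparisons are exactly where the difficulty is hidden.

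Two concrete issues. First, the short-time half-space comparison has the inequality pointing the wrong way: by domain monotonicity the annular Dirichlet kernel is dominated by the exterior-of-ball kernel, but the exterior-of-ball kernel in turn \emph{dominates} the half-space kernel (the half-space $\{x\cdot e_1>1\}$ sits inside the exterior of $B_1$), so the chain does not directly give an upper bound by the explicit half-space expression. One can still make a short-time comparison because the curvature of $S^{n-1}$ is negligible at scale $\sqrt{t}\leq 1$, but this needs an argument, not just the observation that the outer boundary contribution is exponentially small. Second, for $t\geq 1$ the factor $\min(r-1,1)$ encodes a Hopf-lemma / boundary gradient estimate for the zero-Dirichlet problem solved by $\partial_t u_1$. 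Lemma~\ref{holderlemma}, which you cite as the final tool, controls the free-space heat kernel averages used elsewhere in the appendix but does not encode the boundary vanishing rate of $H^n_{(1,R)}(r,s,t)$ as $r\to 1$; pushing an initial datum that vanishes at $s=1$ through the kernel and invoking that lemma will not by itself produce a factor $r-1$ in the output. You would need to replace that step with either an explicit bound on $\partial_r H^n_{(1,R)}(1,s,t)$ or a parabolic boundary Schauder estimate---which is precisely the Lieberman input the paper uses and which quietly does the same job in both time regimes.
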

\begin{proof}
The maximum principle implies
$$\underline{u}_1 \leq u_1 \leq \min \LB h_R, \overline{u}_1 \RB$$
which by (\ref{erfcbound}) yields (\ref{u1bound}).

For any $0 < t \leq 2, 1 \leq r \leq R - 1$ and $1/4 \leq \eta \leq 1,$ let
$$V_{r,t,\eta} = \LB r , r+\sqrt{\eta t} \, \RB \times \LB (1-\eta) t, t \RB.$$
We may rescale by a factor of $\sqrt{t/2}$ and apply (\ref{u1bound}) and the derivative estimates of Lieberman \cite{lieberman} Ch. 4, on $V_{r,t,1/2},$ to obtain
\begin{equation}\label{Vest}
\begin{split}
\| D^k u_1 \|_{L^\infty({V_{r,t,1/4}})} & \leq \frac{C}{t^{k/2}} \| u_1 \|_{L^\infty\left( V_{r,t,1/2} \right)} \\
& \leq \frac{C}{t^{k/2}} \min \LB 1, \frac{\sqrt{t}}{r-1} e^{-(r - 1)^2 / 4t} \RB \\
& \leq \frac{C}{t^{k/2}} e^{-(r-1)^2 / 4t}.
\end{split}
\end{equation}
Note that $\partial_t u_1$ satisfies
\begin{equation}\label{u1tevol}
\begin{split}
\left( \partial_t + \Delta \right) \partial_t u_1 & = 0 \\
\partial_t u_1(1,t) = \partial_t u_1(R,t) & = 0 \qquad (t>0).
\end{split}
\end{equation}
From (\ref{Vest}) we may further write
$$| \partial_t u_1(r,t) | = |\Delta u_1(r,t)| \leq \frac{C}{t} e^{-(r - 1)^2 / 4t} \min \LB \frac{r-1}{\sqrt{t}}, 1 \RB$$
for $ 0 < t \leq 2.$
Note that (\ref{erfcbound}) also implies that for each $r,$ $\partial_t u_1(r,t) \geq 0$ for $t$ sufficiently small, but not identically zero; so the maximum principle implies $\partial_t u_1(r,t) \geq 0$ for all $t>0.$
This establishes (\ref{u1tbound}) for $0 < t \leq 2.$

Returning to (\ref{Vest}) with $t=1,$ we have
$$0 \leq \partial_t u_1(r,1) \leq C e^{-(r - 1)^2 / 4}$$
hence for $r,t\geq 1,$ by (\ref{u1tevol})
\begin{align*}
%\begin{split}
\partial_t u_1(r,t) & = \int_1^R H_{\left( 1, R \right)}^n(r, s, t-1) \, \partial_t u_1(s,1) \, dV_s \leq C \int_1^\infty  \frac{e^{-(r-s)^2/4(t-1) - (s - 1)^2 / 4}\cdot s^{n-1}}{\left(rs + t - 1\right)^{(n-1)/2} \left(t-1 \right)^{1/2}} ds \\
& \quad \quad \quad \quad \quad \quad \quad \quad \quad \quad \quad \quad \leq \frac{C}{(t-1)^{n/2}} \int_1^\infty e^{-(r-s)^2/4(t-1) - (s - 1)^2 / 4} \, s^{n-1} ds.
%\end{split}
\end{align*}
Note that
\begin{align*}
%\begin{split}
\frac{(r-s)^2}{t-1} + (s-1)^2 = \frac{1}{t} \left( \frac{t}{t-1} \left( r-s\right)^2 + t (s-1)^2 \right) \geq \frac{1}{t} \left( r - s + s -1 \right)^2 = \frac{(r-1)^2}{t}
%\end{split}
\end{align*}
and
\begin{align*}
%\begin{split}
\frac{(r-s)^2}{t-1} + (s-1)^2 %& = \frac{4}{5t} \left( \frac{t}{t-1} \left( r-s\right)^2 + t (s-1)^2 \right) + \frac{1}{5}\left( \frac{(r-s)^2}{t-1} + (s-1)^2\right) \\
& \geq \frac{4(r-1)^2}{5t} + \frac{(s-1)^2}{5}.
%\end{split}
\end{align*}
Hence for $t\geq 2$ we have
\begin{equation*}
%\begin{split}
\partial_t u_1(r,t) \leq C \, \frac{e^{-(r-1)^2/5t} }{(t-1)^{n/2}} \int_1^R e^{-(s-1)^2/20} \, s^{n-1} ds \leq \frac{C}{t^{n/2}} e^{-(r-1)^2/5t}.
%\end{split}
\end{equation*}
The extra factor of $r-1$ for $1 \leq r \leq 2$ may again be obtained from (\ref{u1tevol}) and the boundary estimates of \cite{lieberman}, Ch. 4.
\end{proof}

Now, put $n=6$ and $\rho = 1/R.$ We let
\begin{equation*}
\begin{split}
G_\rho(r,t) & = \frac{r^2}{\rho^2}\partial_t \left( u_1 \left(r / \rho, t / \rho^2 \right) \right) = \frac{r^2}{\rho^4} \left( \partial_t u_1 \right) \left(r / \rho, t / \rho^2 \right).
\end{split}
\end{equation*}
From Lemma \ref{u1boundlemma}\ref{u1tbound}, we have the bound
\begin{equation*}
\quad 0 \leq G_\rho(r,t) \leq \frac{C \rho^2 r^2 }{t(t+\rho^2)^2} e^{-(r-\rho)^2/5t} \cdot \begin{cases} \min \LB (r - \rho)/\sqrt{t}, 1 \RB & ( t \leq \rho^2) \\
\min \LB (r - \rho)/\rho, 1 \RB & (t \geq \rho^2). \end{cases}
\end{equation*}
Moreover, by Duhamel's principle \cite{carslaw}, given $\psi(t) \in C^0(\LB t_0 , t_2 \RB),$ the function
\begin{equation}\label{upsidef}
u_\psi(r,t) = \int_{t_0}^t \psi(\tau) \, G_\rho(r, t - \tau) \, d\tau
\end{equation}
on $U^4(\rho,1)$ solves (\ref{insideq} - \ref{insideboundconds}).

\begin{prop}\label{insideprop} Let
$$\frac{3\rho}{2} \leq r_1 \leq r \leq r_2 \leq 1, \qquad t_0 \leq t_1 \leq t \leq t_2, \qquad \bar{r}_2 = \min \LB r_2, \sqrt{t_2 - t_1 } \RB$$
$$C_{\ref{insideprop}} = C e^{ - \left( r_1 - \rho \right)^2/5\left( t_2 - t_0 \right)}.$$
The solution $u_\psi$ of (\ref{insideq}-\ref{insideboundconds}) satisfies
\begin{align}\tag{$a$}
\quad \quad \quad \quad \quad \quad \left| u_\psi (r, t) \right| & \leq 
\, C_{\ref{insideprop}} \left( \frac{\rho}{r} \right)^2 \sup_{t_0 \leq \tau \leq t} |  \psi(\tau) | \\ %\quad \quad (r_1 \leq r \leq R, \,\, t_0 \leq t \leq t_2) \\
\tag{$b$}
r^6 u_\psi^2(r, t) & \leq 
\, C_{\ref{insideprop}} \, \rho^4 \int_{t_0}^t \psi^2(\tau) \, d\tau \\ %\quad \quad (r_1 \leq r \leq R, t_0 \leq t \leq t_2) \\
\tag{$c$}
\int_{r_1}^{1} u_\psi^2(r, t) \, r^2 dV & \leq 
\, C_{\ref{insideprop}} \, \rho^4 \int_{t_0}^t \psi^2(\tau) \, d\tau \\ %\quad \quad (t_0 \leq t \leq t_2) \\
\tag{$d$}
\quad \quad \int_{t_1}^{t_2} \!\!\!\! \int_{r_1}^{r_2} u_\psi^2(r,t) \, dV dt & \leq 
\, C_{\ref{insideprop}} \, \rho^4 \left( \int_{t_0}^{t_1} + \log \left( 1 + \frac{ \bar{r}_2 }{ r_1 } \right) \int_{t_1}^{t_2} \right) \psi^2(\tau) \, d\tau.
\end{align}
\end{prop}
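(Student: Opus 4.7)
The plan is to estimate $u_\psi(r, t) = \int_{t_0}^t \psi(\tau) G_\rho(r, t-\tau)\,d\tau$ via Duhamel's formula, combining the pointwise bound on $G_\rho$ from Lemma \ref{u1boundlemma}(b) with the integral identity $\int_0^T G_\rho(r, s)\,ds = (r/\rho)^2 u_1(r/\rho, T/\rho^2)$ and the sup bound $u_1(R', T') \leq h_R(R') \leq C(R')^{-4}$.

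For (a), I would bound $|u_\psi(r,t)| \leq \sup_{[t_0,t]} |\psi| \cdot \int_0^{t-t_0} G_\rho(r, s)\,ds$, and use the identity above together with $u_1 \leq h_R$ to get $\int G_\rho\,ds \leq C(\rho/r)^2$. To extract the exponential factor $C_{\ref{insideprop}}$, split the Gaussian as $e^{-(r-\rho)^2/5s} = e^{-(r-\rho)^2/10s}\cdot e^{-(r-\rho)^2/10s}$ and use $(r-\rho)^2/s \geq (r_1-\rho)^2/(t_2-t_0)$ on the integration range to peel off $e^{-c(r_1-\rho)^2/(t_2-t_0)}$, retaining enough Gaussian decay in the residual to rerun the $u_1$-type estimate with a slightly reduced diffusion constant (absorbing $c$ into the universal prefactor).

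For (b), apply Cauchy-Schwarz in $\tau$ to obtain $u_\psi^2 \leq (\int \psi^2\,d\tau)(\int G_\rho^2\,d\tau)$, and compute $\int_0^\infty G_\rho(r,s)^2\,ds$ via the successive substitutions $u = (r-\rho)^2/s$ and then $v = u\rho^2/(r-\rho)^2$. This reduces the integral to $r^4/\rho^6 \cdot I(\mu)$ with $I(\mu) = \int_0^\infty v^4 e^{-\mu v}/(1+v)^4\,dv$ and $\mu = 2(R'-1)^2/5$, where $R' = r/\rho$. Since $I(\mu) \sim C/\mu^5$ for $\mu$ large, and $R'-1 \geq R'/3$ in the regime $R' \geq 3/2$, one obtains the sharp bound $\int G_\rho^2\,ds \leq C\rho^4/r^6$, from which (b) follows after extracting $C_{\ref{insideprop}}$ as in (a).

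For (c) and (d), combine Cauchy-Schwarz/Fubini with the spatial integration against $r^{n-1}\,dr$ on $U^4(\rho, R)$; the logarithmic factor $\log(1 + \bar{r}_2/r_1)$ in (d) arises from $\int dr/r$ when $\rho^4/r^6$ is integrated against $r^5\,dr$, and the cutoff $\bar{r}_2 = \min(r_2, \sqrt{t_2-t_1})$ reflects parabolic truncation at the diffusion scale $\sqrt{t-\tau}$. To decouple the initial-time contribution $\int_{t_0}^{t_1}$ from the later-time contribution $\int_{t_1}^{t_2}$ in (d), I would split $u_\psi$ at $t_1$ and handle the two pieces by the sup-norm and Cauchy-Schwarz bounds respectively, using monotonicity of $G_\rho(r, s)$ in $s$ to control the ``memory'' term. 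The main obstacle will be the quantitative balancing of Gaussian concentration and polynomial $r$-weights across the regimes $s \lesssim \rho^2$ (boundary layer near $r=\rho$) and $s \gtrsim \rho^2$ (diffusive spreading), and ensuring that the extraction of $C_{\ref{insideprop}}$ preserves the sharp $r$-decay exponents in each bound.
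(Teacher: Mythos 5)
Your approach to $(a)$ and $(b)$ is sound and takes a genuinely different route from the paper: you compute $\int_0^\infty G_\rho(r,s)^2\,ds$ explicitly (arriving at $C\rho^4/r^6$ via the substitutions leading to $I(\mu)$) and then apply an unweighted Cauchy--Schwarz $u_\psi^2\leq \left(\int\psi^2\right)\left(\int G_\rho^2\right)$, whereas the paper's proof introduces a one-parameter family of \emph{weighted} H\"older splittings, (\ref{insideholder}), in which the weight $(t-\tau+\rho^2)^{-(3-k)}$ is deliberately left attached to $\psi^2$ and the second H\"older factor is checked to be bounded. The paper chooses $k=3$ for $(b)$, $k=2$ for $(c)$, and $k=1$ for $(d)$. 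Your route is more explicit for $(b)$, but it buys you too much: you have squeezed all the time-dependence out of the bound.

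That is precisely where the gap lies for $(c)$. Your pointwise estimate $u_\psi^2(r,t)\leq C_{\ref{insideprop}}\,\rho^4 r^{-6}\int\psi^2\,d\tau$ is sharp as a function of $r$, but integrating it against $r^2\,dV\sim r^5\,dr$ over $[r_1,R]$ produces $C_{\ref{insideprop}}\,\rho^4\,\log(R/r_1)\int\psi^2\,d\tau$: an unbounded logarithm that the stated estimate $(c)$ does not permit, since $R/r_1$ can be as large as $2/(3\rho)$. The paper avoids this precisely because the $k=2$ split leaves behind $r^4u_\psi^2\leq C\rho^4\int\psi^2(\tau)\,\frac{e^{-(r-\rho)^2/5(t-\tau)}}{t-\tau+\rho^2}\,d\tau$, and it is the residual factor $(t-\tau+\rho^2)^{-1}$, combined with the Gaussian's confinement of $r$ to the diffusion scale, that makes $\int_{r_1}^{R} r\, e^{-(r-\rho)^2/5(t-\tau)}\,dr/(t-\tau+\rho^2)$ uniformly bounded in $\tau$ (see (\ref{utimeboundcalc})). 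To fix your proof of $(c)$ you would need a weighted Cauchy--Schwarz of the same flavor, not the unweighted one you used for $(b)$. A related confusion appears in your sketch of $(d)$: there the volume element is $dV = r^3\,dr$, not $r^5\,dr$, so the logarithm does not come from integrating $\rho^4/r^6$ against $r^5\,dr$ as you describe; it comes from the $k=1$ choice and the subsequent change of variables $\bar\tau=t-\tau$ in (\ref{insideproplast})--(\ref{Anastyest}). Your idea of splitting $u_\psi$ at $t_1$ and treating the two pieces separately is correct in spirit, but the paper handles the $[t_0,t_1]$ piece by combining $(c)$ at time $t_1$ with Proposition \ref{initialprop}, not by a sup-norm bound.
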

%\frac{\bar{r}_2^2}{t_1 - \tau}
\begin{proof} Let $u = u_\psi.$ From (\ref{upsidef}) we have
$$ |u_\psi(r,t)| \leq C \frac{\rho^2}{r^2} \int_{t_0}^t | \psi(\tau) | \frac{r^4 e^{-(r-\rho)^2/5(t - \tau)}}{(t-\tau+\rho^2)^2} \, \frac{d\tau}{t-\tau}$$
and ($a$) follows by removing $\sup \psi$ and changing variables.

To prove ($b$), for $0 \leq k \leq 3,$ H\"older's inequality may be applied:
\begin{align}\label{insideholder}
\nonumber & r^{2k} \left( \int_{t_0}^t \psi(\tau) \frac{r^2 e^{-(r-\rho)^2/5(t-\tau)}}{(t-\tau)(t-\tau+\rho^2)^2} d\tau \right)^2 \\
& \quad \leq \int_{t_0}^t \psi^2(\tau) \, \frac{e^{-(r-\rho)^2/5(t-\tau)}}{\left(t-\tau + \rho^2\right)^{3-k}} \, d \tau \cdot \int_{t_0}^t \left(\frac{r^{2 + k} \left( r - \rho\right)}{\left(t - \tau\right)\left( t - \tau + \rho^2 \right)^{\frac{1+k}{2}}}\right)^2 \frac{e^{-(r-\rho)^2/5(t-\tau)} d\tau}{(r-\rho)^2} \\
\nonumber & \quad \leq C \int_{t_0}^t \psi^2(\tau) \, \frac{e^{-(r-\rho)^2/5(t-\tau)}}{\left( t - \tau + \rho^2 \right)^{3-k}} \, d \tau.
\end{align}
We have used the assumption $r \geq r_1 \geq \frac{3}{2} \rho.$ Applying (\ref{insideholder}) with $k=3$ gives ($b$).

Applying (\ref{insideholder}) with $k=2,$ we have
\begin{align}\label{utimeboundcalc}
\nonumber \int_{r_1}^{r_2} u^2(r,t) \, r^2 dV & \leq C \rho^4 \int_{r_1}^{r_2} \left( \int_{t_0}^t \psi(\tau) \frac{r^2 e^{-(r-\rho)^2/5(t-\tau)}}{(t-\tau)(t-\tau+\rho^2)^2} \, d\tau \right)^2 \, r^5 \, dr \\
& \leq C \rho^4 \int_{r_1}^{r_2} \!\!\!\! \int_{t_0}^t \psi^2(\tau) \, \frac{e^{-(r-\rho)^2/5(t-\tau)}}{t - \tau + \rho^2} r d \tau dr \\
\nonumber & \leq C \rho^4 \int_{t_0}^t \psi^2(\tau) \left( \int_{r_1}^{r_2} \frac{e^{-(r-\rho)^2 / 5(t-\tau)}}{t-\tau + \rho^2} \, r dr \right) \, d\tau \\
\nonumber & \leq C \rho^4 e^{-(r_1 - \rho)^2 / 5(t_2 - t_1)} \int_{t_0}^t \psi^2(\tau) \, d\tau
\end{align}
which is ($c$).

To prove ($d$), first assume $t_0 = t_1.$ Applying (\ref{insideholder}) with $k=1,$ we obtain
\begin{align}\label{insideproplast}
\nonumber \int_{t_1}^{t_2} \!\!\!\! \int_{r_1}^{r_2} u^2 \, dV dt & \leq C \rho^4 \int_{t_1}^{t_2} \!\!\!\! \int_{r_1}^{r_2} \left( \int_{t_1}^t \psi(\tau) \frac{r^2 e^{-(r-\rho)^2/5(t-\tau)}}{(t-\tau)(t-\tau+\rho^2)^2} \, d\tau \right)^2 \, r^3 dr dt \\
& \leq C \rho^4 \int_{t_1}^{t_2} \!\!\!\! \int_{r_1}^{r_2} \!\!\!\! \int_{t_1}^t \psi^2(\tau) \, \frac{e^{-(r-\rho)^2/5(t-\tau)}}{\left(t-\tau + \rho^2\right)^2} \,r d\tau dr dt \\
\nonumber & \leq C \rho^4 \int_{t_1}^{t_2} \!\!\!\! \int_{t_1}^t \psi^2(\tau) \left( \int_{r_1}^{r_2} \frac{e^{-(r-\rho)^2 / 5(t-\tau)}}{(t-\tau + \rho^2)^2 } \, r dr \right) \, d\tau  dt.
\end{align}
Letting $\bar{\tau} = t - \tau,$ the domain of integration
$$t_1 \leq t \leq t_2, \quad \quad t_1 \leq \tau \leq t$$
becomes
$$ t_1 \leq \bar{\tau} + \tau \leq t_2, \quad \quad t_1 \leq \tau, \quad \quad 0 \leq \bar{\tau}$$
which we relax to
$$t_1 \leq \tau \leq t_2, \quad \quad 0 \leq \bar{\tau} \leq t_2 - \tau.$$
Hence (\ref{insideproplast}) may be rewritten
\begin{equation}\label{priortodest}
\int_{t_1}^{t_2} \!\!\!\! \int_{r_1}^{r_2} u^2 \, dV dt \leq C \rho^4 \int_{t_1}^{t_2} \psi^2(\tau) \, A(t_2 - \tau) \, d\tau
%& \leq C \rho^4 \log \frac{\bar{r}_2}{r_1} \int_{t_1}^{t_2} \psi^2(\tau) \, d\tau
\end{equation}
where
\begin{align}\label{Anastyest}
\nonumber A (t - \tau) = \int_{r_1}^{r_2} \!\!\!\! \int_0^{t - \tau} e^{-(r-\rho)^2/5\bar{\tau}} \frac{d\bar{\tau}}{\left(\bar{\tau} +\rho^2 \right)^2 } \, r dr & \leq C  \int_{r_1}^{r_2} \frac{ e^{-(r-\rho)^2 / 5 \left( t - \tau \right)}}{(r-\rho)^2} \, r dr \\
& \leq C \int_{r_1}^{r_2} e^{-(r-\rho)^2 / 5 \left( t_2 - t_1 \right)} \, \frac{dr}{r} \\
\nonumber & \leq C e^{ - \left(r_1 - \rho \right)^2/5\left( t_2 - t_1 \right)} \log \left( 1 + \frac{\bar{r}_2}{r_1} \right).
\end{align}
We have used $ \tau \geq t_1$ and $r \geq r_1 \geq \frac{3}{2} \rho.$ The statement ($d$) follows by applying (\ref{priortodest}) over $\LB t_1, t_2 \RB,$ together with ($c$) over $\LB t_0, t_1 \RB$ and Proposition \ref{initialprop} at time $t_1.$
\end{proof}

%\begin{cor}\label{insidecor} 
%For $\ell \neq 0,$ in the previous situation, $u(r,t)$ also satisfies
%\begin{equation}\tag{$a$}
%\begin{split}
%\int_{r_1}^{r_2} u^2(r,t) \, r^{\ell} dV \leq 
%\, C_\ell \rho^4 \left( r_1^\ell + \bar{r}_2^\ell \right) \sup_{t_1 \leq \tau \leq t} \psi^2(\tau)
%\end{split} \quad \quad (t_1 \leq t \leq t_2)
%\end{equation}
%\begin{equation}\tag{$b$}
%\begin{split}
%\int_{t_1}^{t_2} \!\!\!\! \int_{r_1}^{r_2} u^2(r,t) \, r^{\ell} dV dt \leq 
%\, C_\ell \rho^4 \left( r_1^\ell + \bar{r}_2^\ell \right) \int_{t_1}^{t_2}  \psi^2(\tau) \, d\tau. \quad \quad \quad \quad \quad
%\end{split}
%\end{equation}
%\end{cor}
%\begin{proof} This follows by summing over annuli with $r_2 = 2 r_1$ in Proposition \ref{insideprop}.
%\end{proof}

\subsection{Outer boundary data} 

Next, we wish to solve
\begin{equation}\label{outsideq}
\square u_\xi(r,t) = 0
\end{equation}
with
\begin{align}\label{outsidedata}
u_\xi(\rho,t) = 0, \qquad u_\xi(1,t) = \xi(t) \qquad (t_0 \leq t \leq t_2) \\
\nonumber u_\xi(r,t_0) = 0 \qquad (\rho < r \leq 1).
\end{align}

Denote the error function
$$\mbox{erf}(x) = \frac{2}{\sqrt{\pi}} \int^x_{-\infty} e^{-\xi^2} d\xi$$
which is increasing and %, for $x \leq 0,$
satisfies
\begin{equation}\label{erfbound} \max\LB 1-\frac{2}{\sqrt{\pi}} |x|, \frac{e^{-x^2}}{\sqrt{\pi} |x|} \left(1 - \frac{1}{2x^2} \right) \RB \leq \mbox{erf}(x) \leq \min \LB 1, \frac{e^{-x^2}}{\sqrt{\pi} |x|} \RB \quad \left( x \leq 0 \right).
\end{equation}
For $r \leq 1,$ let
$$v_0(r,t) = \, \text{erf}\left( \dfrac{r-1}{2\sqrt{t}} \right)$$
which satisfies
\begin{equation*}
\begin{split}
\left( \partial_t - \partial_r^2 \right) v_0(r,t) & = 0 \qquad (r < 1 \text{ or } t > 0) \\
v_0(1,t) = 1, & \qquad v_0(r,0) = 0 \qquad \left(r < 1 \right). \\
\end{split}
\end{equation*}
For $n \geq 3$ and $\rho \leq 1/2,$ also let
$$h_{\rho}(r) =  \frac{ \rho^{2-n} - r^{2-n} }{ \rho^{2-n} -1 }, \quad \quad \quad \underline{v}_1 = h_\rho v_0.$$
Then
\begin{equation*}
\begin{split}
\left( \partial_t - \left( \partial_r^2 + \frac{n-1}{r} \partial_r \right) \right) \underline{v}_1 & = \frac{ 2 \partial_r r^{2-n} \partial_r v_0 - \left( \rho^{2-n} - r^{2-n} \right) \frac{n-1}{r} \partial_r v_0 }{\rho^{2-n} - 1} \\
& = \frac{(3-n) r^{1-n} + (1-n)\rho^{2-n}r^{-1}}{\rho^{2-n} -1} \partial_r v_0 \leq 0.
\end{split}
\end{equation*}
Hence $\underline{v}_1(r,t)$ is a nonnegative subsolution for the heat equation in $U^n(\rho,1).$
Let
$$\overline{v}_1(r,t) = r^{2-n} v_0(r,t).$$
Then
\begin{equation*}
\begin{split}
\left( \partial_t - \left( \partial_r^2 + \frac{n-1}{r} \partial_r \right) \right) \overline{v}_1 & = \left(- 2 \partial_r r^{2-n} - r^{2-n} \frac{n-1}{r} \right) \partial_r v_0 \\
& = \left( n - 3 \right) r^{1-n} \partial_r v_0 \geq 0.
\end{split}
\end{equation*}
Hence $\overline{v}_1(r,t)$ is a supersolution. Now, let
$$v_1 = v_1^{n,\rho} = h_\rho - H^n_{\left( \rho, 1 \right)} \ast h_\rho$$
be the solution of the heat equation on $U^n(\rho,1) \times \LB 0, \infty \right) $ with
$$v_1(\rho,t) = 0, \qquad v_1(1,t) = 1 \qquad (0 < t < \infty) $$
$$v_1(r,0) \equiv 0 \qquad  (\rho \leq r < 1).$$

\begin{lemma}\label{vlemma} For $n \geq 3,$ the solution $v_1$ satisfies
\begin{equation}\tag{$a$}\label{v1bound}
\begin{split}
h_\rho(r) \cdot \max \LB 1 - \frac{1-r}{\sqrt{\pi \, t}} , \frac{2 \sqrt{t}}{\sqrt{\pi}(1-r)} \right. & \left. \left( 1 - \frac{2t}{(r-1)^2} \right) e^{-(r-1)^2 / 4t } \RB \\
& \leq v_1(r,t) \leq \min \LB h_\rho(r) , \frac{ 2 \sqrt{t} \, r^{2-n}}{\sqrt{\pi} (1-r)} e^{-(r-1)^2 / 4t} \RB
\end{split}
\end{equation}
\begin{equation}\tag{$b$}\label{v1tbound1}
0 \leq \partial_t v_1(r,t) \leq C_n e^{- \left( t + t^{-1} \right) / C_n} \quad \quad \left(t > 0, \quad \rho \leq r \leq 1/2 \right)
\end{equation}
\begin{equation}\tag{$c$}\label{v1tbound2}
0 \leq \partial_t v_1(r,t) \leq C_n \begin{cases} \dfrac{e^{-{(r-1)^2/Ct}} }{t}\min \LB \dfrac{1 - r}{\sqrt{t}}, 1 \RB & \left( 0 < t \leq 1, \quad 1/2 \leq r \leq 1 \right) \\
(1-r) \, e^{-t/C_n} & \left(t \geq 1, \quad 1/2 \leq r \leq 1 \right). \end{cases}
\end{equation}

\end{lemma}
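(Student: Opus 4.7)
The proof mirrors that of Lemma~\ref{u1boundlemma}, adapted from the exterior problem to the annular domain $U^n(\rho,1)$.

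For part $(a)$, the parabolic comparison principle is applied three times on $U^n(\rho,1)\times[0,\infty)$. The subsolution $\underline{v}_1 = h_\rho v_0$ constructed above has parabolic boundary data matching that of $v_1$, giving $v_1 \geq \underline{v}_1$. The supersolution $\overline{v}_1 = r^{2-n}v_0$ has parabolic boundary data $\overline{v}_1(\rho,t) \geq 0$, $\overline{v}_1(1,t)=1$, and $\overline{v}_1(r,0)=0$, each at least as large as the corresponding value for $v_1$, so $v_1 \leq \overline{v}_1$. Finally, the time-independent harmonic function $h_\rho$ has matching boundary values $0$ at $r=\rho$ and $1$ at $r=1$, together with $h_\rho \geq 0 = v_1|_{t=0}$, giving $v_1 \leq h_\rho$. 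Substituting the two-sided error-function bounds (\ref{erfbound}) into $\underline{v}_1$ and $\overline{v}_1$ then produces (\ref{v1bound}).

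For parts $(b)$ and $(c)$, the function $w := \partial_t v_1$ satisfies the heat equation on $U^n(\rho,1)\times(0,\infty)$ with zero Dirichlet data at $r=\rho$ and $r=1$. Nonnegativity $w \geq 0$ follows from the maximum principle applied to $v_1(r,t+h)-v_1(r,t)$ for small $h>0$, using $v_1(r,h) \geq v_1(r,0) = 0$, and then letting $h \to 0^+$. For the upper bounds, I mirror the template of (\ref{Vest})--(\ref{u1tevol}): Lieberman's interior parabolic derivative estimates \cite{lieberman} on a scale-$\sqrt{t}$ sub-cube give $|\partial_t v_1| \leq (C/t)\|v_1\|_{L^\infty}$ on a concentric smaller cube, and inserting the bound $v_1 \leq \overline{v}_1 \leq C_n\sqrt{t}\,(1-r)^{-1} r^{2-n}e^{-(1-r)^2/(4t)}$ from $(a)$ yields the Gaussian factor $e^{-(1-r)^2/(C_n t)}$. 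For case $(c)$ with $r \geq 1/2$, the prefactor $r^{2-n}$ is uniformly bounded, and the additional $\min\{(1-r)/\sqrt{t},1\}$ factor comes from Lieberman's boundary derivative estimate at $r=1$, where $w$ vanishes. For $t \gtrsim 1$, I instead write $\partial_t v_1 = -\Delta(h_\rho - v_1)$; the difference $h_\rho - v_1$ solves the heat equation with zero Dirichlet data and initial value $\leq 1$, so spectral decay on $U^n(\rho,1)$ and interior parabolic regularity yield $|\partial_t v_1| \leq C_n e^{-\lambda_n(\rho)\, t}$, with the extra $(1-r)$ factor in the long-time case of $(c)$ furnished by the Hopf lemma applied to $h_\rho - v_1$ at $r=1$.

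The main obstacle is the uniformity in $\rho$ required for $(b)$, where the stated bound must hold for all $\rho \leq r \leq 1/2$ with $C_n$ depending only on $n$. Both the spectral gap $\lambda_n(\rho) \geq c_n > 0$ and the absorption of the divergent radial prefactor $r^{2-n}$ into the Gaussian $e^{-(1-r)^2/(4t)}$ for $r \approx \rho \to 0$ require care; both succeed for $n \geq 3$ because a point has vanishing capacity, so $\lambda_n(\rho) \nearrow \lambda_n(B_1) > 0$ as $\rho \to 0^+$, and $r^{2-n} \leq \rho^{2-n}$ is dominated by a fraction of the exponent $(1-r)^2/(4t) \geq 1/(16t)$ provided $t \lesssim 1/\log(1/\rho)$. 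The intermediate range $1/\log(1/\rho) \lesssim t \lesssim 1$, where neither regime is sharp on its own, is handled by interpolation using the uniform bound $v_1 \leq h_\rho \leq 1$ together with a parabolic Harnack inequality on the annulus, producing the unified form $e^{-(t + t^{-1})/C_n}$ stated in $(b)$.
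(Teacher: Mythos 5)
Your proof of part $(a)$ exactly matches the paper's: the comparison principle with the barriers $\underline{v}_1 = h_\rho v_0$, $\overline{v}_1 = r^{2-n}v_0$, and $h_\rho$, followed by (\ref{erfbound}). The nonnegativity of $\partial_t v_1$ and the derivative bounds for $r \geq 1/2$ via Lieberman-type interior/boundary estimates also match the paper's (\ref{Vest})--(\ref{v1thalfbound}).

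However, the handling of the intermediate time regime for part $(b)$ --- the piece you yourself flag as the main obstacle --- is not correct as stated. ``Interpolation using $v_1 \leq h_\rho \leq 1$ together with a parabolic Harnack inequality'' cannot produce the exponential smallness $e^{-1/(C_n t)}$ uniformly in $\rho$: the trivial bound gives only $\partial_t v_1 \lesssim 1/t$, and Harnack compares interior values at comparable scales without furnishing smallness, so neither tool forces $\partial_t v_1$ to be of size $e^{-1/(C_n t)}$ (which at $t \sim 1/\log(1/\rho)$ is a \emph{negative} power of $\rho$). What is actually needed is a forward-propagated maximum-principle comparison. The paper does this by restarting the initial-boundary-value problem at $t = \rho^2$: the short-time estimate (\ref{v1tinitialbound}) gives $\partial_t v_1(r,\rho^2) \leq C\,e^{-(r-1)^2/6\rho^2} \leq C\,e^{-1/24\rho^2}$ for $\rho \leq r \leq 1/2$, while the lateral boundary $r=1/2$ is controlled by (\ref{v1thalfbound}); the constant-in-$r$ function $C\,e^{-1/24t}$ is then a supersolution dominating $\partial_t v_1$ on the parabolic boundary of $\{\rho \leq r \leq 1/2\}\times[\rho^2,\infty)$, and the comparison principle yields (\ref{1overtbound}) for all $t \geq \rho^2$, uniformly in $\rho$. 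Your sketch contains the correct ingredients (the small initial bound at the end of the absorption regime, the lateral bound at $r=1/2$) but identifies the wrong mechanism for combining them; replacing ``interpolation + Harnack'' with this comparison-principle restart repairs the argument. For the long-time decay in $(c)$, your use of the spectral gap on $U^n(\rho,1)$ with the capacity observation is a legitimate alternative to the paper's cleaner device of extending $\partial_t v_1$ by zero into $B_\rho$ so that it becomes a subsolution on the full ball $B_1$, where the eigenvalue is $\rho$-independent by construction.
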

\begin{proof}
The maximum principle implies
$$\underline{v}_1 \leq v_1 \leq \min \LB h_\rho, \overline{v}_1 \RB$$
which by (\ref{erfbound}) yields (\ref{v1bound}).

Arguing as in the proof of Lemma \ref{u1boundlemma}, for any $0 < t \leq 1$ and $1/2 \leq r \leq 1,$ we have
\begin{equation*}
\begin{split}
\left| D^k v_1 (r,t) \right|
& \leq \frac{C}{t^{k/2}} \min \LB 1, \frac{\sqrt{t}}{r-1} e^{-(r - 1)^2 / 4t} \RB \leq \frac{C}{t^{k/2}} e^{-(r-1)^2 / 5t}
\end{split}
\end{equation*}
and
\begin{equation}\label{v1thalfbound}
0 \leq \partial_t v_1(r,t) \leq \frac{C}{t} e^{-(r - 1)^2 / 5t} \min \LB \frac{|r-1|}{\sqrt{t}}, \quad 1 \RB \quad \quad (0 < t \leq 1, \quad 1/2 \leq r \leq 1).
\end{equation}
For $0 < t \leq \rho^2,$ we may apply the same derivative estimate uniformly for any $\rho \leq r \leq 1.$ This yields
\begin{equation*}
0 \leq \partial_t v_1(r,t) \leq \frac{C}{t} r^{2-n} e^{-(r - 1)^2 / 5t} \min \LB \frac{|r-\rho|}{\sqrt{t}}, 1 \RB \quad \quad (0 < t \leq \rho^2, \quad \rho \leq r \leq 1).
\end{equation*}
In particular, we have
\begin{equation}\label{v1tinitialbound}
\begin{split}
\partial_t v_1(r, \rho^2) & \leq C \rho^{-2} r^{2-n} e^{-(r - 1)^2 / 5 \rho^2} \\
& \leq C e^{-(r-1)^2 / 6 \rho^2}  \quad (\rho \leq r \leq 1/2).
\end{split}
\end{equation}

Based on (\ref{v1thalfbound}) and (\ref{v1tinitialbound}) with $r = 1/2,$ we may again use the maximum principle. This implies, for $t \geq \rho^2$ and $\rho \leq r \leq 1/2,$ the bound
\begin{equation}\label{1overtbound}
\begin{split}
0 \leq \partial_t v_1(r, t) % & \leq C \max \LB e^{-1/24\rho^2} , \frac{e^{-1/16t}}{t} \RB \\
& \leq C e^{-1/24t} \qquad \left( t \geq \rho^2, \quad \rho \leq r \leq 1/2 \right).
\end{split}
\end{equation}
Lastly, since $\partial_t v_1(r, 1)$ is a subsolution of the heat equation on $B_1$ for $t \geq 1,$ with $\partial_t v_1(r, t) \leq C,$ we have
\begin{equation}\label{explvbound}
\partial_t v_1(r,t) \leq C e^{-\lambda_1 t} (1-r) \quad \left( t \geq 1 \right)
\end{equation}
where $\lambda_1$ is the first Dirichlet eigenvalue of $B_1.$ %(What is it??)
Combining (\ref{v1thalfbound}), (\ref{1overtbound}), and (\ref{explvbound}) yields the desired estimates (\ref{v1tbound1}) and (\ref{v1tbound2}).
\end{proof}

Now, put $n=6$ and let
\begin{equation*}
\begin{split}
K_\rho(r,t) & = r^2 \partial_t v_1 \left(r, t \right).
\end{split}
\end{equation*}
Given $\xi(t) \in C^0(\LB t_0 , t_2 \RB),$ the function
\begin{equation*}
u_\xi (r,t) = \int_{t_0}^t \xi(\tau) \, K_\rho(r, t - \tau) \, d\tau
\end{equation*}
on $U^4(\rho,1)$ solves (\ref{outsideq} - \ref{outsidedata}).

\begin{prop}\label{outsideprop} Let
$$\rho \leq r_1 \leq r \leq r_2 \leq \frac{3}{4}, \qquad t_0 \leq t \leq t_2.$$
The solution $u_\xi$ of (\ref{outsideq}-\ref{outsidedata}) satisfies
\begin{equation}\tag{$a$}
\begin{split}
\quad \quad u_\xi^2(r,t) \leq 
 C r ^4 e^{-\frac{c}{t - t_0} } \int_{t_0}^{t}  \xi^2(\tau) \, e^{- c \left( t - \tau + \frac{1}{t - \tau } \right)} \, d\tau
\end{split}
\end{equation}
\begin{equation}\tag{$b$}
\begin{split}
\int_{t_0}^{t_2} \!\!\!\! \int_{r_1}^{r_2} u_\xi^2(r,t) \, dV dt \leq 
\, C r_2 ^{8} e^{- \frac{c}{ t_2 - t_0} }  \int_{t_0}^{t_2}  \xi^2(\tau) \, e^{- \frac{c}{ t_2 - \tau } } \, d\tau. \quad \quad \quad \quad
\end{split}
\end{equation}
\end{prop}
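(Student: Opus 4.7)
My plan is to invoke the Duhamel representation
\[
u_\xi(r,t) = \int_{t_0}^t \xi(\tau)\, K_\rho(r, t-\tau)\,d\tau
\]
given just before the statement, and apply Cauchy--Schwarz with pointwise control of $K_\rho = r^2\partial_t v_1$ from Lemma~\ref{vlemma}. The hypothesis $r \leq \tfrac{3}{4}R$ keeps us uniformly bounded away from the outer boundary, where the sharper $(1-r)$ gain of ($c$) would come into play but is not needed; after rescaling $v_1$ from outer radius $1$ to outer radius $R$, items ($b$) and ($c$) of Lemma~\ref{vlemma} combine into the single uniform Gaussian-with-gap bound
\[
K_\rho(r,s) \leq C\,\frac{r^2}{R^2}\,\exp\left(-c\left(\frac{s}{R^2}+\frac{R^2}{s}\right)\right), \qquad \rho \leq r \leq \tfrac{3}{4}R,\ s>0.
\]
This single estimate will drive both parts of the proposition.

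For ($a$), I would apply Cauchy--Schwarz symmetrically to the Duhamel integral,
\[
u_\xi^2(r,t) \leq \left(\int_{t_0}^t K_\rho(r,t-\tau)\,d\tau\right)\left(\int_{t_0}^t \xi^2(\tau)\,K_\rho(r,t-\tau)\,d\tau\right),
\]
distributing a weight $e^{\pm cR^2/(2(t-\tau))}$ between the two factors to isolate the $e^{-cR^2/(t-t_0)}$ prefactor cleanly. The decisive observation is that $s\mapsto e^{-cR^2/s}$ is monotone \emph{increasing}, so $e^{-cR^2/s}\leq e^{-cR^2/(t-t_0)}$ for every $s\in(0,t-t_0]$. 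Combining this with $\int_0^\infty e^{-cs/R^2}\,ds \leq CR^2$ and the pointwise bound on $K_\rho$ yields the advertised estimate, with the $(r/R)^4$ prefactor arising as the square of the $r^2/R^2$ already present in the kernel bound.

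For ($b$), I integrate ($a$) first in space against $dV = r^3\,dr$, producing $\int_{r_1}^{r_2} r^7\,dr \lesssim r_2^8$ and hence the $(r_2/R)^8$ prefactor. For the time integration I swap the order by Fubini, in the same spirit as (\ref{fubini}) in the proof of Proposition~\ref{sizeofcurvprop} or the computation of Proposition~\ref{insideprop}($d$), and control the inner integral $\int_\tau^{t_2} e^{-c((t-\tau)/R^2 + R^2/(t-\tau))}\,dt$ by the same monotonicity trick $e^{-cR^2/(t-\tau)} \leq e^{-cR^2/(t_2-\tau)}$ to produce the advertised weight $e^{-cR^2/(t_2-\tau)}$ inside the $\xi^2$ integral. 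The main technical difficulty throughout will be the careful bookkeeping of powers of $R$: the scaling of $K_\rho$ produces negative powers of $R$, the $s$- and $t$-integrals introduce compensating factors of $R^2$, and the Cauchy--Schwarz weights must be apportioned so that the exponents combine cleanly into the sharp prefactors $(r/R)^4$ and $(r_2/R)^8$ advertised in the statement rather than the weaker $(r/R)^4 R^2$ that naive Cauchy--Schwarz delivers.
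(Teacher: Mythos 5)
Your approach is the natural and intended one: Duhamel, the kernel bound from Lemma~\ref{vlemma}, Cauchy--Schwarz with the kernel itself as the splitting weight, and the monotonicity of $s\mapsto e^{-cR^2/s}$ to extract the prefactor. The paper omits the proof entirely ("a simple calculation based on Lemma~\ref{vlemma}$b$"), and your fleshing-out matches that hint; you also correctly note that Lemma~\ref{vlemma}$c$ is needed to cover the band $R/2 \leq r \leq 3R/4$, a detail the paper's citation of $(b)$ alone glosses over.

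One small correction worth flagging: the anxiety at the end about "careful bookkeeping of powers of $R$" and the claim that naive Cauchy--Schwarz only delivers "$(r/R)^4 R^2$" is misdirected. The naive bound gives $r^4/R^2$, which for $R\leq 1$ (the only case that occurs here, and indeed $R=1$ in every application in the proof of Theorem~\ref{decaythm}) is \emph{at most} $(r/R)^4 = r^4/R^4$, so it already implies the stated estimate with no clever apportioning of weights. The same remark applies to ($b$): after the spatial and time integrations you obtain a prefactor $r_2^8$ (or $r_2^8/R^2$ depending on which version of ($a$) you feed in), which for $R\leq 1$ is dominated by $(r_2/R)^8$. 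No weighted variant of Cauchy--Schwarz is needed; the plain inequality $\left(\int \xi K\right)^2 \leq \left(\int K\right)\left(\int \xi^2 K\right)$ combined with the split $e^{-cR^2/s} \leq e^{-cR^2/(2(t-t_0))}\,e^{-cR^2/(2s)}$ and the finite Gaussian-with-gap integral $\int_0^\infty e^{-cs/R^2 - cR^2/(2s)}\,ds \sim R^2$ already closes the argument.
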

\begin{proof} The proof is a simple calculation based on Lemma \ref{vlemma}$b$, which we omit. % In notebook, March 14 2016.
\end{proof}

\subsection{Inhomogeneous term}\label{inhomogeneousappendix} 
Finally, let
\begin{align}\label{inhomogeneous}
\nonumber u_\eta(r,t) & = \int_{t_0}^t \!\! \int_{\rho}^R \eta(s,\tau) \, r^2 H_{(\rho, R)}^6(r, s, t-\tau) \, d V_s d\tau \\
& \leq \int_{t_0}^t \!\! \int_\rho^R \left| \eta(s, \tau) \right| e^{-(r-s)^2 / 4(t - \tau)} \frac{r^2 s^3}{\left( rs + t - \tau \right)^{5/2} } \, \frac{ds}{(t - \tau)^{1/2}} \, d\tau.
\end{align}
For $t \geq t_0,$ this satisfies
$$\square u_\eta(r,t) = \eta (r,t)$$
with zero initial and boundary values.

\begin{prop}\label{inhomogeneoushardprop} Let $u_\eta$ be given by (\ref{inhomogeneous}), with $t_0 = 0,$ and put
$$0 \leq \alpha < 4, \quad \quad \alpha \neq 1$$
$$\bar{\alpha} = \min \LB \alpha, 1 \RB, \quad \quad 0 \leq \beta < 4 - \alpha.$$
Assume
$$\left| \eta(r,t) \right| \leq A r^{k-2} w^{\alpha}(r,t) w^\beta(R,t) \qquad \left(\forall \,\, \rho \leq r \leq R, \quad 0 \leq t < t_2 \right).$$
Then
\begin{equation*}
\left| u_\eta (r, t) \right| \leq C_{k,\alpha,\beta} A w^{\beta}(R,t) \begin{cases} \dfrac{\rho^{k + 2 + \bar{\alpha}}}{r^{2 + \bar{\alpha}}} w^\alpha(r,t) & (k < - 2 - \bar{\alpha} ) \\
r^k w^\alpha(r,t) & ( - 2 - \bar{\alpha} < k < 2 - \alpha ) \\
r^k w^{2-k}(r,t) & ( 2 - \alpha < k < 2) \\
r^2 R^{k-2} w^{\alpha}(R,t) & (k > 2).
\end{cases}
\end{equation*}
\end{prop}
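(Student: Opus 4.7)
The plan is to insert the pointwise hypothesis on $\eta$ into the Gaussian integral representation (\ref{inhomogeneous}) and to evaluate the resulting double integral in two stages: first the spatial integration at fixed time, using Lemma \ref{holderlemma}$a$, then the temporal integration using (\ref{wintegral}-\ref{wintegral2}). Writing $T = t - \tau$, we obtain
\begin{equation*}
|u_\eta(r,t)| \leq C A \int_0^t w^\beta(R,\tau) \int_\rho^R e^{-(r-s)^2/4T} \frac{r^2 s^{k+1}}{(rs+T)^{5/2}} w^\alpha(s,\tau) \, \frac{ds}{T^{1/2}} d\tau.
\end{equation*}

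The main difficulty is that the weight $w^\alpha(s,\tau)$ couples the spatial and temporal variables, preventing a direct application of Lemma \ref{holderlemma}$a$. I would handle this by splitting the $s$-integration at the scale $s = \sqrt{\tau}$: on the interior ($s \leq \sqrt{\tau}$) use $w^\alpha(s,\tau) \leq (s/\sqrt{\tau})^\alpha$, absorbing the extra $s^\alpha$ into the numerator exponent, and on the exterior ($s \geq \sqrt{\tau}$) use $w^\alpha(s,\tau) \leq 1$. In each piece, the integrand then fits the template of Lemma \ref{holderlemma}$a$ applied with $s$ as the integration variable, which is permitted by the $r \leftrightarrow s$ symmetry of the Gaussian kernel (with the asymmetric factors $r^2$ and $s^{k+1}$ absorbed into the exponents $b$ and $a$ respectively). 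The exponents $(a,b,c,d)$ of the lemma must be chosen to match the powers of $r$ and $s$ while maintaining nonnegativity, with redistribution of factors between the $(rs+T)$-denominator and the numerators when $k$ lies outside the range $[-1,2]$; the need for different redistributions is precisely what produces the case split in the conclusion.

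After the $s$-integration, one obtains, in each of the three subregions $s \sim \rho$, $s \sim r$, $s \sim R$, a spatial bound $I(\tau)$ of the form constant times a product of $w$-weights in $(r,T)$ and endpoint factors at $(s,\tau)$. Integrating in $\tau$ over $(0,t)$ via (\ref{wintegral}-\ref{wintegral2}), combined with the $w^\beta(R,\tau)$ factor, yields the four cases in the conclusion: the $s \sim \rho$ subregion controls the answer when $k < -2 - \bar\alpha$ (since $\eta$ is concentrated near $\rho$) and gives the boundary factor $\rho^{k+2+\bar\alpha}/r^{2+\bar\alpha}$; the $s \sim R$ subregion dominates for $k > 2$ and gives the boundary factor $R^{k-2}$ with $w^\alpha(R,t)$; and the bulk $s \sim r$ region produces the natural scaling $r^k$, with the transition at $k = 2 - \alpha$ recording the switch between temporal decay driven by $w^\alpha(r,t)$ (when the $\tau$-integral is dominated by $\tau \sim t$) and $w^{2-k}(r,t)$ (when it is dominated by $\tau \sim r^2$). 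The excluded value $\alpha = 1$ would produce a logarithmic divergence at the transition between (\ref{wintegral}) and (\ref{wintegral2}).

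The main obstacle will be the bookkeeping: for each of the four target cases, one must identify the dominant $(s,\tau)$-region, choose $(a,b,c,d)$ in Lemma \ref{holderlemma}$a$ to satisfy the nonnegativity constraints after the weight redistribution, and verify integrability in $\tau$ using $\beta < 4 - \alpha$ near $\tau = 0$ and the constraint $\alpha < 4$ near the upper limit. The transition thresholds $k = -2 - \bar\alpha$, $k = 2 - \alpha$, and $k = 2$ should emerge automatically from comparing the three spatial contributions and from the exponent $1 - a$ appearing in (\ref{wintegral}-\ref{wintegral2}); the appearance of $\bar\alpha = \min\{\alpha,1\}$ rather than $\alpha$ in the lowest case reflects the fact that once $\alpha \geq 1$, the interior-weight factor $(s/\sqrt\tau)^\alpha$ is suppressed by the competing convergence rate of the $(rs+T)^{-5/2}$ denominator.
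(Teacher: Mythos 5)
Your approach is genuinely different from the paper's, and it is not carried out far enough to constitute a proof. The paper proceeds by a \emph{dyadic decomposition in space}: it writes $\eta = \sum_i \eta_i$ with $\mathrm{supp}\,\eta_i \subset [s_i, s_{i+1}]$ ($s_i = 2^i\rho$), solves each $\square u_i = \eta_i$, and for the ``near'' pieces $s_i < r$ further splits $\eta_i$ in \emph{time} at $\tau = r^2$; the final conclusion emerges by summing the geometric series $\sum_i A s_i^k (s_i/r)^{2+\bar\alpha}$ and $\sum_i A s_i^k (r/s_i)^2 w^\alpha(s_i,t)$, which is where the four $k$-thresholds arise. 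You instead insert the pointwise bound into the full convolution and propose to split the $s$-integral at $s = \sqrt\tau$. That is a coherent idea, but it is a different decomposition, and the $k$-thresholds and endpoint factors do not fall out of a single application of Lemma \ref{holderlemma}$a$ over $[\rho,R]$ as cleanly as you suggest.

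Two concrete gaps. First, you assert that ``after the $s$-integration, one obtains, in each of the three subregions $s \sim \rho$, $s \sim r$, $s \sim R$, a spatial bound $I(\tau)$,'' but Lemma \ref{holderlemma}$a$ applied once over $[\rho,R]$ (with $r$ inside) returns only the middle-case bound, in which $\rho$ enters only through $r_2-r_1 \approx R$; there is no separate $s\sim\rho$ tail from a single invocation. To expose the $\rho^{k+2+\bar\alpha}/r^{2+\bar\alpha}$ factor in the first case you must pull out boundary powers of $s$ by hand (or, as the paper does, decompose dyadically and observe which term of $\sum s_i^{k+2+\bar\alpha}$ dominates), and you have not indicated how. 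Second, and more seriously, your proposed mechanism for the exponent $\bar\alpha = \min(\alpha,1)$---that ``the interior-weight factor $(s/\sqrt\tau)^\alpha$ is suppressed by the competing convergence rate of the $(rs+T)^{-5/2}$ denominator''---does not match the actual source of $\bar\alpha$. In the paper's computation, $\bar\alpha$ arises from the Cauchy--Schwarz split in (\ref{inhomogeneoushard1}) combined with the dichotomy in $\int_0^{\bar r^2} w^{2\alpha}(s_i,\tau)\,d\tau$, which is $\approx s_i^2$ when $\alpha>1$ (integral converges) but $\approx s_i^{2\alpha}\bar r^{2(1-\alpha)}$ when $\alpha<1$ (integral grows): a genuine integrability threshold at $\alpha=1$, not a competition with the spatial kernel. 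I do not see how your $s=\sqrt\tau$ split reproduces this; as written, the ``redistribution of $(a,b,c,d)$'' and the ``bookkeeping'' you defer contain the actual mathematical content, and the step that would force $\bar\alpha$ rather than $\alpha$ (or $1$) in the $k<-2-\bar\alpha$ case has not been exhibited. Finally, you do not address $\beta>0$; the paper handles it by a separate iteration over time slabs $[iR^2,(i+1)R^2]$ using Proposition \ref{trivialinitialprop}, whereas your plan of folding $w^\beta(R,\tau)$ into the $\tau$-integral is not checked against (\ref{wintegral}--\ref{wintegral2}).
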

\begin{proof}%[Proof of Proposition \ref{inhomogeneoushardprop}]
We first assume $\beta = 0.$
Let $N = \lceil - \log_2 \rho \rceil, s_i = 2^i \rho$ for $0 \leq i < N,$ and $s_N = 1.$ Write
$$\eta = \sum_{i=0}^N \eta_i, \quad \quad \text{supp } \eta_i \subset \LB s_i, s_{i+1} \RB$$
and solve
$$\square u_i = \eta_i$$
using (\ref{inhomogeneous}), so that $u = \sum u_i.$

Letting $A_i = A (2s_i)^k,$ the assumption becomes
\begin{equation*}
|\eta_i (s,t)| \leq \frac{A_i}{s_i^2}w^\alpha(s_i, t).
\end{equation*}
To begin, we fix $r$ and estimate $u_i(r,t).$ For $s_i < r/2,$ further write
$$\eta_i = \eta^0_i + \eta^1_i, \quad \quad \square u_i^0 = \eta_i^0, \quad \quad \square u_i^1 = \eta^1_i$$
with
$$\text{supp } \eta^0_i \subset \LB s_i, s_{i+1} \RB \times \LB 0 , r^2 \RB, \quad \quad \text{supp } \eta^1_i \subset \LB s_i, s_{i+1} \RB \times \LB r^2, \infty \RB.$$
Letting $\bar{r} = \min \LB r, \sqrt{t} \RB,$ we have
\begin{align}\label{inhomogeneoushard1}
| u_i^0(r,t) | & \leq C A_i \int_0^{\bar{r}^2} \!\!\!\!\! \int_{s_i}^{s_{i+1}} \frac{e^{-(r-s)^2/4(t - \tau)} r^2 s }{(rs + t - \tau)^{5/2}} w^{\alpha}(s,\tau) \, \frac{ds}{(t-\tau)^{1/2}} d\tau \\\nonumber
& \leq C A_i \left( \int_0^{\bar{r}^2} \!\!\!\!\! \int_{s_i}^{s_{i+1}} \frac{e^{-(r-s)^2/4(t - \tau)} r^4 s^2 }{(rs + t - \tau)^{5}} \, \frac{ds}{(t-\tau)^{1/2}} d\tau \right)^{1/2} \\\nonumber
& \quad \quad \quad \quad \quad \cdot \left( \int_0^{\bar{r}^2} \!\!\!\!\! \int_{s_i}^{s_{i+1}} w^{2\alpha}(s_i, \tau) \frac{ e^{-(r-s)^2/4(t - \tau)} }{(t- \tau)^{1/2}} \, ds d\tau \right)^{1/2}. \nonumber
\end{align}

For the first integral, we change variables $\bar{\tau} = t - \tau,$ to obtain
\begin{align*}
\int_0^{\bar{r}^2} \!\!\!\!\! \int_{s_i}^{s_{i+1}} \frac{e^{-(r-s)^2/4(t - \tau)} r^4 s^2 }{(rs + t - \tau)^{5}} \, \frac{ds}{(t-\tau)^{1/2}} d\tau & = \int_{t-\bar{r}^2}^t \! \int_{s_i}^{s_{i+1}} \frac{e^{-(r-s)^2/4\bar{\tau}} r^4 s^2 }{(rs + \bar{\tau})^{5}} \, \frac{ds}{\bar{\tau}^{1/2}} d\bar{\tau} \\
& \leq \frac{C}{r^2 + t - \bar{r}^2} \frac{\bar{r}^2}{r^2 + t} w^4(r , t - \bar{r}^2) \left( \frac{s_i^2}{r^2 + t - \bar{r}^2} \right)^{3/2} \\
& \leq C \frac{r^6 s_i^3}{\left( r^2 + t \right)^{11/2}}.
\end{align*}
We have used Lemma \ref{holderlemma}$b$ while noting that $r^2 + t - \bar{r}^2 \geq \frac{1}{2} (r^2 + t).$
For the second integral of (\ref{inhomogeneoushard1}), note for $r \geq 3 s_i$ that
\begin{equation}\label{eandtbound}
\frac{e^{- (r-s)^2/4(t-\tau)}}{(t-\tau)^{1/2}} \leq \frac{e^{- r^2/36(t-\tau)} }{r} \frac{r}{(t-\tau)^{1/2}} \leq \frac{C}{r}.
\end{equation}
By (\ref{wintegral}), we have
\begin{equation*}
\begin{split}
\int_0^{\bar{r}^2} \!\!\!\!\! \int_{s_i}^{s_{i+1}} w^{2\alpha}(s_i, \tau) \frac{ e^{-(r-s)^2/4(t - \tau)} }{(t- \tau)^{1/2}} \, ds d\tau & \leq C \frac{s_i}{r} \int_0^{\bar{r}^2} w^{2\alpha}(s_i, \tau) \, d\tau \\
& \leq C_\alpha
\begin{cases} \frac{s_i^3}{r} & (\alpha > 1) \\
\frac{s_i}{r} s_i^{2\alpha} \left( r^2 \right)^{1 - \alpha} & (0 \leq \alpha < 1)
\end{cases}
\end{split}
\end{equation*}
\begin{equation*}
\quad \quad \quad \quad \quad \quad \quad \quad \quad \quad \leq C_\alpha r^2 \left( \frac{s_i}{r} \right)^{1 + 2\bar{\alpha}}.
\end{equation*}
Returning to (\ref{inhomogeneoushard1}), we conclude
\begin{equation}\label{inhomogeneoushard1.5}
|u_i^0(r,t)| \leq C A_i \left( \frac{r^8 s_i^3}{(r^2 + t)^{11/2}} \left( \frac{s_i}{r} \right)^{1 + 2\bar{\alpha}} \right)^{1/2} \leq C A_i \left( \frac{s_i}{r} \right)^{2 + \bar{\alpha}} w^{11/2}(r,t).
\end{equation}

Next, note that $u^1_i(r,t) = 0$ for $t < r^2,$ while for $t \geq r^2$ we have
\begin{align}\label{inhomogeneoushard2}
|u^1_i(r,t)| & \leq C A_i \int_{\bar{r}^2}^t \! \int_{s_i}^{s_{i+1}} \frac{e^{-(r-s)^2/4(t - \tau)} r^2 s }{(rs + t - \tau)^{5/2}} \, \frac{ds}{(t-\tau)^{1/2}} w^{\alpha}(s_i ,\tau)d\tau.
\end{align}
Lemma \ref{holderlemma}$a,$ for $r \geq 3 s_i,$ gives
\begin{align*}
\int_{s_i}^{s_{i+1}} \frac{e^{-(r-s)^2/4(t - \tau) } r^2 s}{ \left( rs + t - \tau \right)^{5/2}} \frac{ds}{(t-\tau)^{1/2}} & \leq C \frac{r^2 s_i}{ \left( r^2+ t - \tau \right)^{5/2}} \frac{s_i}{s_i + \sqrt{t - \tau}} e^{-(r - s_i)^2/5(t - \tau)} \\
& \leq C \frac{r^2 s_i^2}{ \left( r^2+ t - \tau \right)^{3}}.
\end{align*}
Note that for $\tau \geq r^2,$ we have $s_i^2 + \tau \geq \frac{1}{2}(r^2 + \tau)$ and
$$w^\alpha(s_i, \tau) \leq C \frac{s_i^\alpha}{(r^2 + \tau)^{\alpha / 2}}$$
For $0 \leq \tau \leq t,$ also
$$(r^2 + t - \tau)(r^2 + \tau) = (r^2 + t)r^2 - \tau r^2 + r^2 \tau + ( t - \tau) \tau \geq r^2 (r^2 + t).$$
Then (\ref{inhomogeneoushard2}) becomes
\begin{align}\label{inhomogeneoushard3}
|u^1_i(r,t)| & \leq C A_i r^2 s_i^{2 + \alpha} \int_{r^2}^t \frac{d\tau}{(r^2 + t - \tau)^{3}( r^2 + \tau)^{\alpha/2}} \\\nonumber
& \leq C A_i \frac{r^{2} s_i^{2 + \alpha}}{\left( r^2(r^2 + t)\right)^{\alpha/2}} \int_{r^2}^t \frac{d \tau}{(r^2 + t - \tau)^{3 - \alpha / 2}} \\\nonumber
& \leq C_\alpha A_i \frac{r^{2 - \alpha} s_i^{2 + \alpha}}{(r^2 + t)^{\alpha/2}} {r}^{2\left( 1- (3 - \alpha / 2) \right)} \\
& \leq C_\alpha A_i \left( \frac{s_i}{r} \right)^2 \left( \frac{s_i^2}{ r^2 + t } \right)^{\alpha/2} = C_\alpha A_i \left( \frac{s_i}{r} \right)^{2 + \alpha} w^\alpha(r,t).
\end{align}
for $3 - \alpha / 2 > 1,$ or $\alpha < 4.$ We conclude from (\ref{inhomogeneoushard1.5}) and (\ref{inhomogeneoushard3}) that
\begin{equation}\label{inhomogeneoushard4}
\begin{split}
|u_i(r,t)| \leq |u_i^0| + |u_i^1| & \leq C A_i \left( \frac{s_i}{r} \right)^{2 + \bar{\alpha}} \left(w^{11/2}(r,t) + w^{\alpha }(r,t) \right) \\
& \leq C A_i \left( \frac{s_i}{r} \right)^{2 + \bar{\alpha}} w^{\alpha }(r,t)
\end{split} \quad (r \geq 3 s_i).
\end{equation}

For $r \leq 3 s_i$ and $0 \leq \alpha < 4,$ in similar fashion, we have
\begin{align}\label{inhomogeneoushard5}
\nonumber |u_i(r,t)| & \leq C A_i \int_0^{t} \!\!\! \int_{s_i}^{s_{i+1}} \frac{e^{-(r-s)^2/4(t - \tau)} r^2 s }{(rs + t - \tau)^{5/2}} w^{\alpha}(s,\tau) \, \frac{ds}{(t-\tau)^{1/2}} d\tau \\\nonumber
& \leq C A_i \left( \int_0^{s_i^2} + \int_{s_i^2}^t \right) \frac{r^2 s_i^{2 + \alpha}}{(s_i^2 + t - \tau)^3 (s_i^2 + \tau)^{\alpha/2}} \, d\tau \\\nonumber
& \leq C A_i \left( \left(\frac{r}{s_i}\right)^2 w^6(s_i, t) + \frac{r^2 s_i^{2 + \alpha}}{\left( s_i^2(s_i^2 + t) \right)^{\alpha/2}} \int_{s_i^2}^t \frac{d\tau}{(s_i^2 + t - \tau)^{3 - \alpha/2}} \right) \\
& \leq C A_i \left( \left(\frac{r}{s_i}\right)^2 w^6(s_i, t) + \frac{r^2 s_i^{2}}{(s_i^2 + t)^{\alpha/2}} s_i^{\alpha - 4} \right) \leq C A_i \left(\frac{r}{s_i}\right)^2 w^\alpha(s_i, t).
\end{align}

We now let $M = \lceil - \log_2 r \rceil$ and $A_i = A s_i^k.$ Summing (\ref{inhomogeneoushard4}) and (\ref{inhomogeneoushard5}) yields
\begin{align}\label{inhomogeneoushard6}
|u(r,t)| & \leq CA \left( \sum_{i = 0}^{M} \frac{s_i^{k+2+\bar{\alpha}}}{r^{2+\bar{\alpha}}} w^\alpha(r,t) + \sum_{i = M + 1}^N s_i^{k - 2} r^2 w^{\alpha}(s_i, t) \right).
\end{align}
Note for $s_i \geq r$ that
$$w^{\alpha}(s_i , t) \leq \left( \frac{s_i}{r} \right)^\alpha w^\alpha(r,t).$$
For $k < -2 - \bar{\alpha},$ since $k -2 + \alpha$ is negative ($\alpha < 4$), (\ref{inhomogeneoushard6}) reads
\begin{align*}
|u (r,t) | & \leq CA \left( \frac{\rho^{k + 2 + \bar{\alpha}}}{r^{2 + \bar{\alpha}} } w^\alpha(r,t) + \sum_{i = M + 1}^{N} r^{2 - \alpha} s_i^{k - 2 + \alpha} w^\alpha(r,t) \right) \\
%& \leq CA \left( \frac{\rho^{k + 3}}{r^3} w^\alpha(r,t) + r^{2 - \alpha} r^{k - 2 + \alpha} w^\alpha(r,t) \right) \\
& \leq CA \left( \frac{\rho^{k + 2 + \bar{\alpha} }}{r^{2 + \bar{\alpha} } } + r^k \right) w^\alpha(r,t) \leq CA \frac{\rho^{k + 2 + \bar{\alpha} }}{r^{2 + \bar{\alpha} } } w^\alpha(r,t).
\end{align*}
Next, if $k > -2 - \bar{\alpha}$ and $k - 2 + \alpha < 0,$ we have simply
\begin{align*}
|u(r,t)| & \leq CA r^k w^\alpha(r,t).
\end{align*}
For $ -2 - \bar{\alpha} < k < 2$ and $k -2 + \alpha > 0,$ we have
\begin{align*}
|u(r,t)| & \leq CA \left( r^k w^\alpha(r,t) + \sum_{i = M + 1}^N r^2 \frac{s_i^{k - 2 + \alpha}}{ (s_i^2 + t)^{\alpha/2} } \right).
\end{align*}
Notice that the terms of the summation attain a maximum either at $i = \lceil \frac12 \log t \rceil,$ if $r^2 < t,$ or at $i = M + 1,$ if $r^2 \geq t,$ and decay exponentially on either side. We therefore have
\begin{align*}
|u(r,t)|  & \leq CA \left( r^k w^\alpha(r,t) + r^2 ( r^2 + t)^{\frac{k-2}{2}} \right) \\
& \leq CA \left( r^k w^\alpha(r,t) + r^k w^{2-k}(r,t) \right) \leq CA r^k w^{2-k}(r,t).
\end{align*}
For $k > 2,$ we have
\begin{align*}
|u(r,t)| & \leq CA \left( r^k w^\alpha(r,t) + \sum_{i = M + 1}^N r^2 s_i^{k - 2} w^\alpha(s_i, t) \right) \leq C A r^2 R^{k-2} w^\alpha(R, t).
\end{align*}

The case $0 < \beta < 4 - \alpha$ follows by applying the $\beta = 0$ case over time intervals
$$\LB i R^2, (i + 1)R^2 \RB$$
together with Proposition \ref{trivialinitialprop}, and summing. %9/6/16.
\end{proof}

\begin{prop}\label{inhomogeneousprop} Let $u_\eta$ be given by (\ref{inhomogeneous}), and put
$$\rho \leq r_1 \leq r_2 \leq R, \quad \quad t_0 \leq t_1 \leq t_2.$$
Assume
\begin{equation*}
\begin{split} \int_{t_0}^{t_1} \!\!\!\! \int_{r}^{2r} \left( s^2 \eta(s,t) \right)^2 \, dV_s dt \leq B_0 r^m \\
\int_{t_1}^{t_2} \!\!\!\! \int_{r}^{2r} \left( s^2 \eta(s,t) \right)^2 \, dV_s dt \leq B_1 r^m
\end{split} \qquad (\rho \leq r \leq R/2).
\end{equation*}
Let $B = B_0 + B_1,$ and if $m > 0$ choose $0 < \alpha < m.$ Then
\begin{align}
\tag{$a$}
r^6 u_\eta^2(r,t) & \leq C_{m,\alpha} B \begin{cases} \rho^m & (m < 0) \\
r^{m - \alpha} R^\alpha & (0 < m - \alpha < 10) \\
r^{10} R^{m - 10} & (m > 10)
\end{cases} \\
\tag{$b$} \int_{r_1}^{r_2} u_\eta^2(r,t_1) \, r^2 dV & \leq C_{m,\alpha} B_0 \begin{cases}
\rho^m & (m < 0) \\
r_2^{m - \alpha} R^\alpha & (0 < m < 8) \\
r_2^8 R^{m - 8} & (m > 8)
\end{cases} \\
\tag{$c$} \int_{t_1}^{t_2} \!\!\!\! \int_{r_1}^{r_2} u_\eta^2 \, dV dt & \leq C_{m,\alpha} \begin{cases} \rho^m \left( B_0 + B_1 \log\left( 1 + \dfrac{r_2}{r_1} \right) \right) & (m < 0) \\
B r_2^{m - \alpha} R^\alpha & (0 < m < 8) \\
B r_2^8 R^{m - 8} & (m > 8).
\end{cases}
\end{align}
\end{prop}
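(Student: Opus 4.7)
The plan is to adapt the dyadic-decomposition strategy of Proposition \ref{inhomogeneoushardprop} to the present $L^2$-type hypothesis, via Cauchy--Schwarz. Set $s_i = 2^i \rho$ and $A_i = [s_i, 2 s_i]$ for $i = 0, \ldots, N$ with $s_N \sim R$, and let $\tilde{\eta} = s^2 \eta$, so that the hypothesis reads
\begin{equation*}
\int_{t_0}^{t_k} \!\! \int_{A_i} \tilde{\eta}^2 \, dV_s d\tau \leq C B_{k-1} s_i^m \qquad (k = 1, 2).
\end{equation*}
Using the kernel estimate in (\ref{inhomogeneous}), I write $u_\eta(r,t) \leq \sum_i \alpha_i(r,t)$ with
\begin{equation*}
\alpha_i(r,t) = \int_{t_0}^t \!\! \int_{A_i} |\tilde{\eta}(s,\tau)| \Psi(r,s,t-\tau) \, ds d\tau, \qquad \Psi(r,s,u) = \frac{r^2 s \, e^{-(r-s)^2/4u}}{(rs+u)^{5/2} u^{1/2}}.
\end{equation*}
Cauchy--Schwarz with weight $s^3 ds d\tau$ then yields $\alpha_i(r,t)^2 \leq C B s_i^m \mathcal{K}_i(r,t)$, where $B = B_0 + B_1$ and
\begin{equation*}
\mathcal{K}_i(r,t) = \int_0^{t-t_0} \!\! \int_{A_i} \frac{r^4 \, e^{-(r-s)^2/2u}}{s \, (rs+u)^5 \, u} \, ds du.
\end{equation*}

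The main technical step is the kernel estimate
\begin{equation*}
\mathcal{K}_i(r,t) \leq \frac{C}{r^6} \min\!\left\{1, \left(\frac{r}{s_i}\right)^{10}\right\}
\end{equation*}
(modulated by $w$-factors from the finite time window). I plan to prove this by performing the $s$-integration via a mild variant of Lemma \ref{holderlemma}(a), with the extra $u^{-1/2}$ handled by a further dyadic decomposition in $u$, followed by summation using (\ref{wintegral})--(\ref{wintegral2}). The delicate regime is $s_i \sim r$: there the $u^{-1}$ singularity in $\mathcal{K}_i$ is only integrable by virtue of the combined decay from $(rs+u)^{-5}$ and the Gaussian, producing a borderline logarithm which the $\alpha$-slack in the statement is designed to absorb.

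Substituting back gives $r^6 \alpha_i^2 \leq C B s_i^m$ for $s_i \leq r$ and $r^6 \alpha_i^2 \leq C B r^{10} s_i^{m-10}$ for $s_i \geq r$. These form two geometric-type sequences whose sum is dominated by the extremal term: for $m < 0$, the small-$s_i$ end dominates, giving $\rho^m$; for $0 < m < 10$, the transition $s_i \sim r$ dominates, giving $r^m$ plus a logarithmic endpoint excess absorbed into the slightly-weaker $r^{m-\alpha} R^\alpha$; for $m > 10$, the large-$s_i$ end dominates, giving $r^{10} R^{m-10}$. This proves (a).

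Statements (b) and (c) follow by integrating the pointwise bound from (a) over the prescribed regions against the measure $dV = r^3 dr$, with the $w$-factors processed via (\ref{wintegral})--(\ref{wintegral2}). For (c), when $t_0 < t_1$, it is cleanest to split $\eta = \eta \mathbf{1}_{[t_0,t_1]} + \eta \mathbf{1}_{[t_1,t_2]}$: the contribution of the first piece at time $t > t_1$ is controlled via (b) at $t_1$ and Proposition \ref{initialprop}, while the second is estimated by re-running the dyadic Cauchy--Schwarz argument with $B_1$ replacing $B$. The logarithmic factor $\log(1 + r_2/r_1)$ in the $m < 0$ case of (c) arises from $\int_{r_1}^{r_2} dr/r$ applied to the $r$-independent $\rho^m/r^6$ appearing in (a). The main obstacle will be Step 2: estimating $\mathcal{K}_i$ cleanly in the $s_i \sim r$ regime, and bookkeeping all the $w$-factors, is a lengthy case-check, though the calculation parallels those of Propositions \ref{initialprop} and \ref{inhomogeneoushardprop}.
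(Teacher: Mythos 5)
Your plan for (a) is structurally the same as the paper's: a dyadic decomposition $s_i = 2^i\rho$, Cauchy--Schwarz against the weight $s^3\,ds\,d\tau$ to convert the $L^2$ hypothesis into a bound $r^6 u_i^2 \lesssim E_i \min\{1,(r/s_i)^{10}\}$ (the paper's (\ref{inhomogeneous4})), and then a weighted Cauchy--Schwarz $\left(\sum u_i\right)^2 \leq \left(\sum s_i^{\alpha}\right)\left(\sum u_i^2 s_i^{-\alpha}\right)\leq C_\alpha R^\alpha\sum u_i^2 s_i^{-\alpha}$ to avoid a $\log(R/\rho)$ factor from the triangle inequality. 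One remark: your worry about a ``borderline logarithm'' at $s_i\sim r$ is misplaced. The $u$-integral in $\mathcal{K}_i$ converges cleanly near the diagonal (substitute $u = r^2 v$; the integrand is $O(v^{-1/2}(1+v)^{-5})$), so there is no log in the kernel estimate. The $\alpha$-slack exists purely to tame the summation over $i$, not to absorb a kernel log. This is a cosmetic error; the structure of (a) is sound.

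\textbf{Parts (b) and (c).} Here there is a genuine gap. You propose to integrate the pointwise bound (a) over $r$ (and $r,t$ for (c)). This does not reproduce the stated estimates.

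For (b) in the case $m<0$: (a) gives $u^2(r,t_1) \lesssim B\rho^m r^{-6}$, so
\begin{equation*}
\int_{r_1}^{r_2} u^2(r,t_1)\,r^2\,dV = c\int_{r_1}^{r_2} u^2 r^5\,dr \lesssim B\rho^m\int_{r_1}^{r_2}\frac{dr}{r} = B\rho^m\log\frac{r_2}{r_1},
\end{equation*}
while the claim is $C B_0\rho^m$ with no logarithm. The log cannot be removed by sharpening (a): it is intrinsic to integrating a pointwise bound of the right homogeneity. The paper instead returns to the intermediate H\"older inequality (\ref{inhomogeneousholder}) with $k=2$, multiplies by $r$, and applies Fubini \emph{before} integrating the Gaussian in $r$; the Gaussian factor $e^{-(r-s)^2/4(t-\tau)}/(t-\tau)^{1/2}$ then integrates in $r$ to $O(1)$ rather than contributing a pointwise factor whose integral over $[r_1,r_2]$ is a log. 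This is what produces the sharp $B_0\rho^m$.

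For (c) the gap is even worse: (a) has no time decay whatsoever (the factor $(t-t_0)/(r^2+t-t_0)$ in (\ref{inhomogeneous1}) is bounded above by $1$ and does not decay as $t\to\infty$), so integrating (a) gives an upper bound proportional to $(t_2-t_1)$ --- and, in the $m<0$ case, proportional to $r_1^{-2}$ from $\int_{r_1}^{r_2} r^{-3}\,dr$ --- neither of which appears in the claim. The claim in (c) is instead $\rho^m(B_0 + B_1\log(1+r_2/r_1))$, uniform in $t_2$. The $\log$ in (c) does \emph{not} come from $\int dr/r$ applied to (a) as you suggest; it comes from the Fubini argument with $k=1$ in (\ref{inhomogeneousholder}), where the extra factor $(rs + t-\tau)^{-1}$ in the kernel gives $\int dt/(s^2 + t-\tau) = \log(\,\cdots)$, truncated by the factor $(r_2-r_1)/(r_2-r_1+\sqrt{t-\tau})$ from Lemma \ref{holderlemma}$(a)$. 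Your idea of splitting $\eta = \eta\mathbf{1}_{[t_0,t_1]} + \eta\mathbf{1}_{[t_1,t_2]}$ and handling the first piece via (b) and Proposition \ref{initialprop} is reasonable (it parallels the proof of Proposition \ref{insideprop}$(d)$), but for the second piece you cannot simply ``re-run the dyadic Cauchy--Schwarz argument'' and integrate: you must run the Fubini argument with $k=1$ to get the time-integrability. Without that, the bound diverges as $t_2\to\infty$.
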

\begin{proof}%[Proof of Proposition \ref{inhomogeneousprop}]
Let $u_i$ be as in the previous proof, and write
$$E_i = \int_{t_0}^{t_1} \!\!\!\! \int_{s_i}^{s_{i+1}} \left( s^2 \eta(s,t) \right)^2 \, dV_s dt.$$

Let $0 \leq k \leq 2$ and apply H\"older's inequality
\begin{align}\label{inhomogeneousholder}
\nonumber r^{2k} u_i(r,t)^2 & \leq C \left( \int_{t_0}^t \!\!\! \int_{s_i}^{s_{i+1}} \left| \eta_i(s,\tau) \right| e^{-(r-s)^2 / 4(t - \tau)} \frac{r^{2+k} s^3}{\left( rs + t - \tau \right)^{5/2} } \, \frac{ds}{(t - \tau)^{1/2}} \, d\tau \right)^2 \\
\nonumber & \leq \int_{t_0}^t \!\!\! \int_{s_i}^{s_{i+1}} \eta^2(s,\tau) e^{-(r-s)^2 / 4(t - \tau)} \frac{ s^6}{\left( rs + t - \tau \right)^{2-k} } \, \frac{ds}{(t - \tau)^{1/2}} \, d\tau \\
& \quad \quad \quad \quad \cdot \int_{t_0}^t \int_{s_i}^{s_{i+1}} e^{-(r-s)^2 / 4(t - \tau)} \frac{ r^{2k + 4} }{\left( rs + t - \tau \right)^{k + 3} } \, \frac{ds}{(t - \tau)^{1/2}} \, d \tau \\
\nonumber & \leq C_m \frac{t - t_0}{r^2 + t - t_0} \min \LB \left( \frac{r}{s_i} \right)^{2k + 4} , \frac{s_i}{r} \RB \int_{t_0}^t \int_{s_i}^{s_{i+1}} s^6 \eta^2 \frac{e^{-(r-s)^2 / 4(t - \tau)}}{\left( rs + t - \tau \right)^{2-k} } \, \frac{ds}{(t - \tau)^{1/2}} \, d\tau
\end{align}
where we have applied Lemma \ref{holderlemma}$b.$

First assume $r \geq 3s_i$ and apply (\ref{inhomogeneousholder}) with $k=2,$ as well as (\ref{eandtbound}), to obtain
\begin{equation}\label{inhomogeneous1}
 r^4 u_i^2(r,t) \leq C \frac{t - t_0}{r^2 + t - t_0} \frac{s_i}{r^2} \int_{t_0}^t \int_{s_i}^{s_{i+1}} \left( s^2 \eta(s,\tau) \right)^2 s^2 \, ds d\tau \leq \frac{C}{r^2} \frac{t - t_0}{r^2 + t - t_0} E_i.
 \end{equation} 
For $r \leq 3s_i,$ instead write
\begin{align}\label{inhomogeneous2}
 u_i(r,t)^2 \leq \int_{t_0}^t \!\! \int_{s_i}^{s_{i+1}} \eta^2(s,\tau) \, s^6 ds d\tau \cdot \int_{t_0}^t \int_{s_i}^{s_{i+1}} e^{-(r-s)^2 / 2(t - \tau)} \frac{ r^4 }{\left( rs + t - \tau \right)^{5}} \, \frac{ds d \tau}{(t - \tau)}.
\end{align}
Applying Lemma \ref{holderlemma}$a,$ we have
\begin{align*}
\int_{t_0}^t \!\! \int_{s_i}^{s_{i+1}} \frac{e^{-(r-s)^2 / 2(t - \tau)} r^4 }{\left( rs + t - \tau \right)^{5}} \, \frac{ds \, d \tau}{(t - \tau)} &  \leq C \int_0^{t - t_0} \frac{1}{(s_i^2 + \bar{\tau})^3} \frac{s_i}{s_i + \sqrt{\bar{\tau}}} \frac{r^4}{(s_i^2 + \bar{\tau})^2} \frac{d\bar{\tau}}{\sqrt{\bar{\tau}}} \\
& \leq C s_i r^4 \int_0^{t - t_0} \frac{d \sqrt{\bar{\tau}} }{(s_i + \sqrt{\bar{\tau}})^{11}} \leq C \frac{r^4}{s_i^9}.
\end{align*}
Then (\ref{inhomogeneous2}) reads
\begin{equation}\label{inhomogeneous3}
r^6 u_i(r,t)^2 \leq C \left( \frac{r}{s_i} \right)^{10} E_i \quad \quad (r \leq 3 s_i).
\end{equation}
Combining (\ref{inhomogeneous1}) and (\ref{inhomogeneous3}) yields
\begin{equation}\label{inhomogeneous4}
r^6 u_i^2(r,t) \leq C E_i \min \LB 1, \left( \frac{r}{s_i} \right)^{10} \RB.
\end{equation}

Note for $\alpha > 0$ that
\begin{equation}\label{inhomogeneousalphaeq}
\begin{split}
u^2 = \left( \sum_i u_i s_i^{-\alpha/2} s_i^{\alpha/2} \right)^2 & \leq \sum u_i^2 s_i^{-\alpha} \sum s_i^{\alpha} \leq C_\alpha R^\alpha \sum u_i^2 s_i^{-\alpha}.
\end{split}
\end{equation}
To prove ($a$), assuming $m - \alpha > 0,$ from (\ref{inhomogeneous4}) and (\ref{inhomogeneousalphaeq}), we have
\begin{align*}
r^6 u^2(r,t) & \leq C R^\alpha \sum r^6 u_i^2 s_i^{-\alpha}  \\
& \leq C B R^\alpha \left( \sum_{i = 0}^j s_i^{m - \alpha} + r^{10} \sum_{i = j+1}^{N} s_i^{m - \alpha - 10} \right) \leq \begin{cases} r^{m-\alpha} R^\alpha & (m - \alpha < 10) \\
r^{10} R^{m - 10} & (m - \alpha > 10).
\end{cases}
\end{align*}
If $m < 0,$ we replace $s_i$ by $s_i^{-1}$ in (\ref{inhomogeneousalphaeq}) and take $\alpha = -m/2.$ We then have
\begin{align*}
r^6 u^2(r,t) & \leq C \rho^{-\alpha}\sum r^6 u_i^2 s_i^{\alpha} \leq C B \rho^{m/2} \left( \sum_{i = 0}^j s_i^{m/2} + r^{10} \sum_{i = j+1}^{N} s_i^{m/2 - 10} \right) \leq CB \rho^m
\end{align*}
which establishes ($a$).

To prove ($b$), return to (\ref{inhomogeneousholder}) with $k=2,$ integrate, and apply Fubini's Theorem. To prove ($c$), integrate (\ref{inhomogeneousholder}) with $k=1.$
\end{proof}

%\section*{Acknowledgements}

\end{document}